\theoremstyle{plain}
\newtheorem{thm}{Theorem}
\newtheorem{lem}[thm]{Lemma}
\newtheorem{prop}[thm]{Proposition}
\newtheorem{defn}[thm]{Definition}
\newtheorem{rem}[thm]{Remark}
\newtheorem{ass}{Assumption}
\theoremstyle{definition}
\newtheorem{exmp}[thm]{Example}
\providecommand{\customgenericname}{}
\newcommand{\newcustomtheorem}[2]{%
  \newenvironment{#1}[1]
  {%
   \renewcommand\customgenericname{#2}%
   \renewcommand\theinnercustomgeneric{##1}%
   \innercustomgeneric
  }
  {\endinnercustomgeneric}
}
\def \be {\begin{equation}}
\def \ee {\end{equation}}
\def \E {\mathbb{E}}
\def \P {\mathbb{P}}
\def \R {\mathbb{R}}
\def \ES {\llbracket d \rrbracket}
\def \NN {\llbracket N \rrbracket}
\def \ESd {\llbracket d-1 \rrbracket}
\def \Law {\mathrm{Law}}
\def \N {\mathcal{N}}
\def \Int {\mathrm{Int}(S_d)}
\def \Intx {\mathrm{Int}(\hat{S}_d)}
\renewcommand{\phi}{\varphi}
\renewcommand{\epsilon}{\varepsilon}
\renewcommand{\tilde}{\widetilde}
\renewcommand{\hat}{\widehat}
\renewcommand{\bar}{\overline}
\begin{document}

\title[Finite state $N$-agent and mean field control problems]{Finite state $N$-agent and mean field control problems}

\author{Alekos Cecchin}

\address[A. Cecchin]
{\newline \indent Centre de Math\'ematiques Appliqu\'ees, \'Ecole Polytechnique
\newline
\indent Route de Saclay, 
91124 Palaiseau Cedex, France
\newline }
\email{alekos.cecchin@polytechnique.edu}
\urladdr{http://sites.google.com/view/alekoscecchin}
\thanks{
This research benefited from the support of the project ANR-16-CE40-0015-01 on ``Mean Field Games'', 
LABEX Louis Bachelier Finance and Sustainable Growth - project 
ANR-11-LABX-0019, under the Investments for the Future program (in accordance
with Article 8 of the Assignment Agreements for Communication 
Assistance), ECOREES ANR Project, FDD Chair and Joint Research Initiative
FiME in partnership with Europlace Institute of Finance.
\newline
 The author is grateful to  Fran\c cois Delarue and Charles Bertucci for helpful comments and suggestions, as well as to  two anonymous referees for their remarks which helped to improve the paper. 
}

\date{\today}
\subjclass{35B65, 35F21, 49L25, 49M25, 60F15, 60J27, 91A12} 
\keywords{
Mean field control problem, control of Markov chains, finite state space, cooperative games, social planner, Hamilton-Jacobi-Bellman equation, viscosity solution, finite difference approximation, classical solution, propagation of chaos.
}

\begin{abstract}
We examine mean field control problems  on a finite state space, in continuous time and over a finite time horizon. We characterize the value function of the mean field control problem as the unique viscosity solution of a Hamilton-Jacobi-Bellman equation in the simplex. In absence of any convexity assumption, we exploit this characterization to prove convergence, as $N$ grows, of the value functions of the centralized $N$-agent optimal control problem to the limit mean field control problem  value function, with a convergence rate of order $\frac{1}{\sqrt{N}}$. Then, assuming convexity, we show that the limit value function is smooth and establish propagation of chaos, i.e.  convergence of the $N$-agent optimal trajectories to the unique limiting optimal trajectory, with an explicit rate.
\end{abstract}

\maketitle
 
\tableofcontents

\section{Introduction}

Mean field control problems (MFCP), also called control of McKean-Vlasov equations, can be interpreted as limit of cooperative $N$-agent games, as the number of players tends to infinity. 
Such agents have a common cost to minimize and the minimizers are also called \emph{Pareto} equilibria; alternatively, we could think of a social planner that minimizes an average cost.  

We investigate $N$-agent optimization and mean field control problems in continuous time over a finite time horizon, with dynamics belonging to a finite state space $\{1,\dots,d\}$. More precisely, the $N$ agents $\bm{X}=(X^1,\dots,X^N)$ follow the dynamics 
\[
\P(X^k_{t+h}=j |\bm{X}_t=\bm{x} )= Q_{x_k,j}(t,\beta_k(t,\bm{x}), \mu^N_{\bm{x}}) h +o(h),
\]
where $\mu^N$ is the empirical measure and the controls (here in feedback form) $\bm{\beta}=(\beta_1,\dots,\beta_N)$ are chosen in order to minimize the common cost
\[
J^N(\bm{\beta}) := \frac1N \sum_{k=1}^N \E\left[ \int_0^T   f(t,X^k_t, \beta_k (t, \bm{X}_t),\mu^N_t)
 dt + g(X^k_T,\mu^N_T)  \right].
\]
Assuming that controls depend only on the empirical measure, the usual propagation of chaos arguments suggest that the limit of this $N$-agent optimization, as $N$ grows, consists in a single player which evolves according to 
\[
\P(X_{t+h}=j | X_t=i)= Q_{i,j}(t, \alpha^i(t), \Law(X_t)) h +o(h)
\]
and aims at minimizing
\[
J(\alpha) :=  \E\left[ \int_0^T   f(t, X_t, \alpha (t, X_t),  \Law(X_t) )  dt + g(X_T,\Law(X_T))  \right].
\]
Our goal is to study in detail the $N$-agent optimization and the mean field control problem and thus prove  convergence of the former to the latter, 
the main result being to provide an explicit convergence rate.

Such problems are studied so far mainly for  continuous state space and diffusion-based dynamics. In such situation, the limiting MFCP can be analyzed in two different ways, by considering either open-loop or (Markovian) feedback controls. In the first case, a version of the Pontryagin principle is derived in \cite{CarDelMKV}, which leads to study a forward-backward system of (It\^o-type) SDEs of McKean-Vlasov type.  While the case of feedback controls is analyzed in \cite{PhamWei, PhamWei2}, where the MFCP is reformulated in a deterministic way as the optimal control of the Fokker-Planck equation, which permits to derive a dynamic programming principle  and then a Hamilton-Jacobi-Bellman (HJB) equation for the value function, written in the Wasserstain space of probability measures. See also \cite{LaurierePironneau, Bensoussan} for previous ideas in this direction and \cite{BCP, DPT1} for more general versions of the dynamic programming principle. 
We refer to \cite[Chapter 6]{CarDel1} for a comparison of the two approaches.
Mean field control problems arise also in the study of potential mean field games, see e.g. \cite{CGPT,BrianiCarda}, where it is shown that the mean field game system represents the necessary conditions for optimality of a suitable mean filed control problem.

The question of convergence of the $N$-agent optimization to the MFCP, still in the diffusion setting, has been analyzed mainly in two ways. The first consists in showing that the set of (relaxed open-loop) optimizers of the $N$-agent optimization is precompact and then that the limit points are supported on optimizers of the MFCP. We remark that the optimizer is non-unique in general, but is unique under additional convexity assumptions; we return to this point below.  This strategy is employed first in \cite{Lacker}, then in \cite{FLOS} for deterministic dynamics and, more recently, in \cite{DPT} for more general dynamics with a common noise and in \cite{djete} for problems with interaction also through the law of the control.
Clearly, a convergence rate for the convergence of the value functions can not be proved using compactness arguments. 
The other way is to prove convergence via the system of FBSDEs, in case the limit solution is unique. In \cite{CarDelMKV}, the value functions are shown to converge, with a suitable convergence rate, assuming that the cost $f$ is convex in $(a,x,m)$ and $g$ is convex in $(x,m)$; see also \cite[\S 6.1.3]{CarDel2}.
  Moreover, a propagation of chaos property is also proved, i.e. the prelimit (unique) optimal trajectories are shown to converge to the (unique) limit optimal trajectory, with a convergence rate (actually, such result is not stated in this way, but can be immediately derived from the proof of Theorem 6.1 therein). More recently, this method has been applied to problems with interaction through the law of the control in \cite{LauriereTangpi}.
In both cases, as a consequence of the convergence of the value functions,  an optimal control for the MFCP is shown to be $\epsilon_N$-optimal for the $N$-agent optimization, with $\lim_N \epsilon_N =0$, with the same limitations just explained (no convergence rate in the first case and convexity required in the second).

Here, in the finite state setting, we first analyze the MFCP with feedback controls. We rewrite it as a deterministic control problem for the Fokker-Planck equation and show that its value function $V$ is the unique viscosity solution of the corresponding HJB equation, stated in the $d$-dimensional simplex. Then we examine the convergence problem: we stress that convergence can be understood both in terms of value functions and of optimal trajectories. 
Our main result is to  show that the value function $V^N$ of the $N$-agent optimization converges to $V$, with a convergence rate of order $1/\sqrt{N}$, in absence of any convexity assumption. 
As explained above, a result of this type is not available for diffusion-based models.
As a consequence of this convergence, we also prove that any optimal control for the MFCP is $\frac{C}{\sqrt{N}}$-optimal for the $N$-agent optimization.
A similar result is proved also in \cite{kol} with different methods and without interpreting the prelimit model as a $N$-agent optimization; see Remark \ref{remk} for the details. In the discrete time setting, the MFCP and the related convergence of the $N$ agent are investigated in full generality in \cite{motte}.

Our main novelty consists in the method of the proof of the main result, which is different  with respect to the two explained above and we believe can be of interest.
This is based on the viscosity solution characterization of $V$: indeed, we will see that the ODE satisfied by $V^N$ can be seen as a finite difference scheme for the HJB equation satisfied by $V$.
Notably, $V$ is not differentiable as we do not assume any convexity of the costs, neither in $a$ nor in $m$.  
Finite difference schemes for viscosity solutions have been investigated by many authors in the last decades. Being impossible to give a complete bibliography on the subject, we would like to mention two papers which inspired our proof of convergence and that established, in particular, a rate of convergence. 
 The first is \cite{ICD}, which studied a semidiscrete approximation scheme for the HJB equation of infinite horizon control problem with discount; see also the book
\cite[Sec. VI.1]{BCD}. While the second is \cite{Sou}, which analyzed a finite difference scheme for a general time-dependent Hamilton-Jacobi equation; see also \cite{CL}.

We also study the propagation of chaos property for the optimal trajectories, in case the limit is unique. If the value function $V$ is sufficiently smooth, i.e. in $\mathcal{C}^{1,1}$, then the MFCP is uniquely solvable and we  prove that the prelimit (unique) optimal empirical measures converge to the limit deterministic optimal flow, with a suitable convergence rate. We give also sufficient conditions for which $V\in \mathcal{C}^{1,1}$: these are the standard convexity assumptions. We remark that also these smoothness and propagation of chaos results seem to be new in the study of MFCP. 
Moreover, it is worth saying that we do not treat here neither problems with a common noise nor with interaction through the law of the control; these are left to future work. 

Finally, we mention that convergence results have been obtained also for the opposite regime of mean field games. In that case, players are non-cooperative in the prelimit $N$-player game and the notion of optimality is that of Nash equilibrium, which highly depends on the set of admissible startegies that is considered. This makes the convergence analysis more difficult, expecially in case limiting mean field game solutions are non-unique; some references are \cite{CDLL, DelFT, Fischer, Lacker1, Lacker2} for diffusion-based models and 
\cite{BCCD-N, BC, CDFP, CP, gomes} for finite state space.

The rest of the paper is organized as follows. In Section \ref{sec2}, we collect our main results: after introducing the notations and assumptions that will be in force, we define properly the $N$-agent optimization and show the equivalence with the mean field formulation, then we present the MFCP with its well-posedness result (Theorem \ref{thm:mainmfc}) and thus we state the convergence theorems. In Section \ref{sec3}, we examine the MFCP and prove first well-posedness of viscosity solutions and then, under additional convexity assunptions, well-posedness of classical solutions. 
Notably, we establish a comparison principle (Theorem \ref{thmcomparison}) for viscosity solutions on the interior of the simplex --without boundary conditions, by exploiting the invariance of the domain--, which is a new result in the theory. 
Finally, Section \ref{sec4} contains the proofs of the convergence results (Theorems \ref{thmconvV} and \ref{thmchaos}): first the convergence of the value functions via viscosity solutions and then, assuming $V\in \mathcal{C}^{1,1}$, the propagation of chaos property.

\section{Main Results}
\label{sec2}

\subsection{Notation}

We denote $\ES = \{1,\dots,d\}$ and let 
\[
S_d := \bigg\{ (m_1, \dots, m_d) \in \R^d : \quad m_i  \geq 0  \quad \forall i\in\ES, \quad 
\sum_{i=1}^d m_i =1 \bigg\} 
\] 
be the $(d-1)$-dimensional simplex, endowed with the euclidean norm $|\cdot |$ in $\R^d$.   
  We denote by $\langle \cdot, \cdot \rangle$ the scalar product in $\R^d$ and, for a matrix $(Q_{i,j})_{i,j\in\ES}$, we let $Q_{i,\bullet}$ be the row $i$. We denote the elements of the simplex by $m$, while $\mu$  denotes processes with values in the simplex.
 
 The simplex can be viewed as a subset of $\R^{d-1}$, by expressing the last coordinate as $m_d=1-\sum_{j=1}^{d-1} m_j$: denote then 
 \[
 \hat{S}_d := \bigg\{ (x_1, \dots, x_{d-1}) \in \R^{d-1} : \quad x_j  \geq 0  \quad \forall j=1,\dots, d-1,  \quad 
\sum_{j=1}^{d-1} x_j \leq 1 \bigg\}.
 \]
We express the simplex using the particular local chart 
 $(m_1,\dots,m_{d-1},m_d) = (x_1,\dots, x_{d-1}, 1- \sum_{j=1}^{d-1} x_j)$.
Let $\Intx$ be the interior of $\hat{S}_d$ in $\R^{d-1}$; when we refer to the interior of simplex,  denoted by $\mathrm{Int}(S_d)$, we mean  the image of $\Intx$ under the above chart. Via the above local chart, a function $v$ defined on the simplex is equivalently written as a function $\hat{v}$ defined on 
 $\hat{S}_d$. Thus we say that $v\in \mathcal{C}^1(S_d)$ if $\hat{v}\in \mathcal{C}^1(\hat{S}_d)$, meaning that $\hat{v}\in \mathcal{C}^1(\mathrm{Int}(\hat{S}_d))$ with derivatives that extend continuously up to the boundary. In the interior of the simplex, derivatives are allowed only along directions $(\delta_j-\delta_i)_{i,j \in \ES}$, which are denoted as $\partial_{m_j-m_i}v(m)$; we define the vector
 \[
 D^i v (m) := (\partial_{m_j-m_i}v(m))_{j\in \ES}.
 \]
 Note that this is indeed a vector in $\R^{d-1}$ since $D^i_i v(m)=0$.
  The derivatives of $\hat{v}:\hat{S}_d\rightarrow \R$ are denoted by $D_x \hat{v}(x)=(\partial_{x_j}\hat{v}(x))_{j=1}^{d-1}$, with the obvious identity $\partial_{x_j}\hat{v}(x)= \partial_{m_j-m_d} v(m)$, $\forall j\in\ESd$, if $x$ represents $m$ in local chart.
 Let $\mathcal{C}^{1,1}(S_d)$ be the set of $v\in \mathcal{C}^1(S_d)$ whose derivative $D_x \hat{v}$ is Lipschitz-continuous, globally in $S_d$. 
 
 For $\bm{x}=(x_1,\dots,x_N)\in \ES^N$, denote the empirical measure 
 \be 
 \mu^N_{\bm{x}}:= \frac1N \sum_{k=1}^N \delta_{x_k}, \quad \mbox{ that is } 
 \mu^N_{\bm{x}}[i]= \frac1N \sum_{k=1}^N \mathbbm{1}_{\{x_k=i\}}, \quad i\in \ES.
 \ee 
 It takes values in the discretized simplex 
$S_d^N := S_d \cap \frac1N \mathbb{N}^d$. 
 We also denote the $N$-discretized derivative $D^{N,i}v = (D^{N,i}_j v)_{j\in\ES}$ of a function $v:S_d\rightarrow \R$, with 
 \be 
 D^{N,i}_j v(m):= N \bigg[ v \bigg( m+\frac1N(\delta_j-\delta_i)\bigg) -v(m) \bigg];
 \ee 
 while for a function $u: \ES^N\rightarrow \R$ we denote by $\Delta^k u \in \R^d$ the vector of differences, that is, 
 $\Delta^k u(\bm{x})[j]= u([\bm{x}^{-k},j]) -u(\bm{x})$, whereas, for $\bm{x}\in \ES^N$ and $j\in\ES$,  
 $[\bm{x}^{-k},j]$ denotes the vector in $\ES^N$ such that 
 $[\bm{x}^{-k},j]_l= \begin{cases}
 x_l &\mbox{ if } l\neq k \\
 j &\mbox{ if } l=k.
 \end{cases}$

\subsection{Assumptions}

Let $Q_{i,j} : [0,T] \times S_d \times A \rightarrow [0, +\infty)$ be the transition rate, $f^i:[0,T]\times A \times S_d \rightarrow \R$ the running cost and $g^i: S_d \rightarrow \R$ the terminal cost, where $i\in\ES$ represents the state. 
Let $F: [0,T]\times A^d \times S_d \rightarrow \R$ and 
$G:S_d\rightarrow \R$ be defined by 
\be 
\label{defFG}
F(t,a^1,\dots,a^d ,m) := \sum_{i\in\ES} m_i f^i(t,a^i,m) , \qquad 
G(m) = \sum_{i\in\ES} m_i g^i(m).
\ee 

We provide three sets of assumptions. 
The first is the weakest and gives convergence of the value functions, with a convergence rate. Note in particular that we don't assume convexity, neither in $a$, nor in $m$.

\begin{ass}
	\label{A}
	\begin{enumerate}
	\item[(A1)] The action space $(A,\mathbf{d})$ is a compact metric space.
	\item[(A2)] 	The transition rate  $Q_{i,j}$ is continuous on 
$[0,T]\times A \times S_d$ (thus uniformly continuous and bounded) and	
	Lipschitz-continuous in $(t,m)$, uniformly in $a$:
	\begin{equation}
	|Q_{i,j}(t,a,m) - Q_{i,j}(s,a,p)| \leq C(|t-s|+|m-p| ). 
	\end{equation} 
	\item[(A3)] The functions $F$ is continuous on 
	$[0,T]\times A^d \times S_d$ and
	\begin{align}
	|F(t,a,m)- F(s,a,m) | &\leq C(|t-s|+|m-p|  ), \\
	|G(m)-G(p) | &\leq C |m-p|.	
	\end{align}
	\end{enumerate}
	\end{ass} 

We denote then, for $z\in\R^d$ such that $z_i=0$, $a\in A$ and $m\in S_d$,  the pre-Hamiltonian 
\be 
\label{preH}
\mathcal{H}^i(t,a,m,z) := - \langle Q_{i,\bullet} (t,a,m) , z \rangle - f^i(t,a,m)
\ee
and the Hamiltonian 
\be 
\label{H}
H^i(t,m,z):= \max_{a\in A} \mathcal{H}^i(t,a,m,z).
\ee 
Note that $\sum_i m_i H^i$ is Lipschitz-continuous in $(t,m,z)$ if \ref{A} holds.

The second assumption is a linear-convex assumption, very common in control theory, which, together with existence of classical solution to the limiting problem, gives convergence of the optimal trajectories.

\begin{ass}
	\label{B}
	Assumption \ref{A} holds and, in addition:
	\begin{enumerate}
		\item[(B1)] $A = [0,M]^d$. 
		\item[(B2)] The transition rate is
		$Q_{i,j}(t,m,a) = a_j$. 
		\item[(B3)] The running cost $f$ is continuously differentiable in $A$, $\nabla_a f$ is Lipschitz-continuous with respect to $m$, and $f$ is uniformly convex in $A$, i.e. there exists $\lambda>0$ such that
		\be
		\label{conva}
		f^i(t,b,m) \geq f^i(t,a,m) + \langle \nabla_a f(t,a,m) , (b-a) \rangle + \lambda |b-a|^2.
		\ee 
	\end{enumerate}
\end{ass} 
Under this assumption, thanks to Proposition 1 in \cite{gomes}, there exists a unique maximizer of $\mathcal{H}$, which we denote 
by $a^*(t,i,m,z)$, and, further, $a^*$ is Lipschitz continuous with respect to $m$ and $z$, i.e. 
\be 
|a^*(t,i,m,z) - a^*(t,i,p,w)| \leq C (|m-p| +|z-w|) .
\ee 
We will consider  feedback controls $\alpha:[0,T] \times \ES \rightarrow A$ (or equivalently $\alpha:[0,T]\rightarrow A^d$), thus, when (\ref{B}1) holds, we denote $\alpha_{i,j}(t) := \alpha_j(t,i)$.

The last is a convexity assumption in the couple $(\alpha,m)$ that is needed to prove smoothness of the value function of the MFCP.
\begin{ass}
	\label{C}
	Assumption \ref{B} holds and, in addition:
	\begin{itemize}
		\item[(C1)] $F(\cdot,a,\cdot) \in \mathcal{C}^{1,1}([0,T]\times S_d)$, uniformly in  $a$, and $G\in \mathcal{C}^{1,1}(S_d)$.
		\item[(C2)] The function
		\[
		[0,T]\times [0,+\infty)^{d\times d} \times \Int \ni (t,w,m) 
		\rightarrow \sum_i m_i f^i \left(t, \Big(\frac{w_{i,j}}{m_i} \Big)_{j\neq i} , m \right) \in \R
		\]
		 is convex in $(w,m)$ and $G$ is convex in $m$.
	\end{itemize}
\end{ass} 
Assumption (\ref{C}2) may seem strange at  this stage, but will be clarified in Section \ref{sec3.2}.

\begin{rem}
The reader will notice that not all of the conditions of Assumption \ref{A} (Lipschitz-continuity in $(t,m)$) are necessary in all the statements in which \ref{A} is assumed. The same is true also for Assumption \ref{B}. We choose to make only three Assumptions  for the sake of definiteness. We stress again that the main differences among the three assumptions are that in \ref{A} nothing is  convex, while in \ref{B} we assume convexity in $a$ and in \ref{C} in $(a,m)$.

We remark that \ref{A} allows to treat also the case of directed graphs, in which some transitions are forbidden: if $E=\{e_1, \dots, e_d\}$ is the set of nodes and $E_+(e_i)$, for each $i\in\ES$, is the subset of $E\setminus\{e_i\}$ of nodes $e_j$ for which there exists a directed edge from $e_i$ to $e_j$, then the transition matrix $Q$ is required to satisfy $Q_{i,j}(t,a,m)=0$ whenever $e_j \notin E_+(e_i)$.

We also remark that we do not assume that $f$ splits in a function of $(i,a)$ plus a function of $(i,m)$, as it would be in the case of potential mean field games which are described in \S \ref{sec3.3}. 
\end{rem}

We conclude this part with a natural example for which \ref{B} and/or \ref{C} are satisfied.
\begin{exmp}
	As a natural running cost for which \eqref{conva} holds, we could take
	\be 
	\label{quadratic}
	f^i(t,\alpha,m) := \frac12 \sum_{j\neq i}  \alpha_{i,j}^2 + f^i_0(m).
	\ee 
	In this case, we get 
	\[
	H^i(t,m,z) = \sum_{j\neq i} 
	\left\{ 
	-\mathfrak{a}^*(-z_j)z_j -\frac12|\mathfrak{a}^*(-z_j)|^2
	\right\}  
	- f^i_0(m) ,
	\qquad a^*_j (t,i,z) = \mathfrak{a}^*(-z_j),
	\] 
	where, for $r\in\R$, $\mathfrak{a}^*(r) :=
	\begin{cases}
	0 \qquad&r\leq 0 \\
	r &0\leq r\leq M \\
	M &r\geq M
	\end{cases}
	$. 	
	
	In fact, we remark that our Assumptions \ref{B} and \ref{C} are more general since they allow for a running cost which does not split as in \eqref{quadratic}. 
	If $f$ is as in \eqref{quadratic}, then (\ref{C}2) is satisfied if the function $m\rightarrow \sum_i m_i f^i_0(m)$ is convex; indeed, one can easily verify that the function 
	$(w,m)\rightarrow \sum_i  \sum_{j\neq i} \frac{w_{i,j}^2}{m_i}$ is convex in $(w,m)$.
\end{exmp}

\subsection{$N$-agent optimization}

Consider $N$ players, $\bm{X}=(X^1,\dots,X^N)$, such that $X^i_t\in \ES$, evolving in continuous time over a finite horizon $T$. 
Agents can choose controls $\bm{\beta} =(\beta_1,\dots,\beta_N)$ in feedback form, i.e. any $\beta_k$ is a measurable function of time and state of all players: $\beta_k: [0,T]\times \ES^N \rightarrow A$. 
The dynamics is given as a Markov chain such that 
\be 
\label{tranXN}
\P(X^k_{t+h}=j |\bm{X}_t=\bm{x} )= Q_{x_k,j}(t,\beta_k(t,\bm{x}), \mu^N_{\bm{x}}) h +o(h),
\ee
for $j\neq x_k$, as $h\rightarrow 0^+$.
Agents are cooperative and aim at minimizing the common cost
\be 
\label{costN}
J^N(\bm{\beta}) := \frac1N \sum_{k=1}^N \E\left[ \int_0^T   f(t,X^k_t, \beta_k (t, \bm{X}_t),\mu^N_t)
 dt + g(X^k_T,\mu^N_T)  \right].
\ee
The cost coefficients $f$ and $g$ depend on the empirical measure 
\be
\mu^N_t= \frac1N \sum_{k=1}^N \delta_{X^i_t} \qquad \in S_d^N.
\ee
We denote $\mu^N=(\mu^N_1,\dots,\mu^N_d)$, that is,  
$\mu^N_{i,t}=\frac1N \sum_{k=1}^N \mathbbm{1}_{\{X^k_t=i\}}$.

The initial conditions $(X^1_0,\dots,X^N_0)$ are assumed to be i.i.d with $\Law(X^1_0)=m_0$.
This can be seen as a single optimization problem for the process 
$\bm{X}$, governed by the generator
\be 
\label{genN}
\mathcal{L}^{N,\bm{\beta}}_t \phi(\bm{x}) = \sum_{k=1}^N \sum_{j\neq x_k} 
Q_{x_k,j} (t,\beta_k(t,\bm{x}), \mu^N_{\bm{x}} ) [\phi(\bm{x}^{-k},j) - \phi(\bm{x})], 
\ee  
for any $\varphi: \ES^N\rightarrow \R$.
The value function $v^N$ of this control problem solves the HJB equation 
\[
-\frac{d}{dt} v^N(t,\bm{x}) + 
\sup_{\bm{b}\in A^N} \bigg\{-\mathcal{L}^{N,\bm{b}}_t v^N(t,\bm{x}) - \frac1N \sum_{k=1}^N f(t,x_k, b_k, \mu^N_{\bm{x}}) \bigg\} =0, 
\]
which, by definition of the Hamiltonian in \eqref{H}, gives 
\be 
\label{NODE}
\begin{split}
&-\frac{d}{dt} v^N(t,\bm{x}) + \frac1N \sum_{k=1}^N H^{x_k}
(t, \mu^N_{\bm{x}}, N \Delta^k v^N(t,\bm{x}))) =0, \\
&v^N(T,\bm{x}) = \frac1N \sum_{k=1}^N g(x_k, \mu^N_{\bm{x}}).
\end{split}
\ee 
This is a system of ODEs indexed by $\bm{x}\in\ES^N$.

\begin{prop}
	Under Assumption \ref{A}, there exists a unique solution $v^N$ to \eqref{NODE}, which is $\mathcal{C}^1$ in time. It is the value function of the control problem \eqref{costN}-\eqref{genN} and there exists an optimal feedback control.
\end{prop}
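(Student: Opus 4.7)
The plan is to read \eqref{NODE} as a finite system of $d^N$ coupled ODEs indexed by $\bm{x}\in\ES^N$ and then carry out a standard verification argument. Under Assumption \ref{A}, the pre-Hamiltonian $\mathcal{H}^i$ in \eqref{preH} is continuous in $(t,a,m,z)$ and affine in $z$ with slope bounded by $\sup_{t,a,m}|Q_{i,\bullet}(t,a,m)|<\infty$ (from (A2)). Taking the supremum over $a$ in the compact set $A$ preserves continuity in $(t,m)$ and global Lipschitz continuity in $z$, so each $H^i$ is continuous in $(t,m,z)$ and Lipschitz in $z$ with a constant independent of $(t,m)$. Viewing \eqref{NODE} as a vector field on $\R^{\ES^N}$ parametrised by $t\in[0,T]$, this makes the right-hand side jointly continuous in $t$ and globally Lipschitz in the unknown; Cauchy--Lipschitz applied backwards from the terminal condition then yields a unique solution $v^N\in\mathcal{C}^1([0,T];\R^{\ES^N})$.

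For the verification step I would fix an arbitrary admissible feedback $\bm{\beta}=(\beta_1,\dots,\beta_N)$ and let $\bm{X}$ be the associated Markov chain with generator $\mathcal{L}^{N,\bm{\beta}}_t$ from \eqref{genN}. The definition \eqref{H} of the Hamiltonian and the ODE \eqref{NODE} give the pointwise inequality
\[
-\partial_t v^N(t,\bm{x})-\mathcal{L}^{N,\bm{b}}_t v^N(t,\bm{x})-\frac{1}{N}\sum_{k=1}^N f(t,x_k,b_k,\mu^N_{\bm{x}})\leq 0,\qquad \bm{b}\in A^N,
\]
with equality at any maximizer. Applying Dynkin's formula to $t\mapsto v^N(t,\bm{X}_t)$ between an arbitrary $s\in[0,T]$ and $T$, taking expectation, and using the terminal condition in \eqref{NODE} yields $v^N(s,\bm{x})\leq J^N(\bm{\beta})$ (with cost started from $(s,\bm{x})$), hence $v^N\leq V^N$ on $[0,T]\times\ES^N$.

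The matching inequality uses that the inner maximization decouples across players: since $\mathcal{L}^{N,\bm{b}}_t v^N(t,\bm{x})$ is a sum over $k$ of terms depending only on $b_k\in A$, for each $(k,t,\bm{x})$ one is reduced to maximizing
\[
b\mapsto -\sum_{j\neq x_k} Q_{x_k,j}(t,b,\mu^N_{\bm{x}})\bigl[v^N(t,[\bm{x}^{-k},j])-v^N(t,\bm{x})\bigr]-\frac{1}{N}f(t,x_k,b,\mu^N_{\bm{x}}),
\]
a continuous function on the compact metric space $A$ with continuous dependence on $t$. Kuratowski--Ryll-Nardzewski applied to the upper semicontinuous $\arg\max$ multifunction produces a Borel maximizer $\beta_k^*:[0,T]\times\ES^N\to A$; plugging $\bm{\beta}^*$ back into Dynkin's formula turns the previous inequality into an equality, giving $v^N(s,\bm{x})=J^N(\bm{\beta}^*)$. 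This simultaneously identifies $v^N$ with $V^N$ and exhibits $\bm{\beta}^*$ as an optimal feedback.

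The only slightly non-routine ingredient is the measurable selection, but because $\ES^N$ is finite it is needed only in the time variable for each fixed state, where classical results for upper semicontinuous set-valued maps on a compact metric target apply; everything else is standard ODE theory together with a direct use of Dynkin's formula.
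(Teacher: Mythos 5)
Your argument is correct and is exactly the route the paper takes: the paper's proof consists of citing a standard verification theorem and noting that Lipschitz continuity of the Hamiltonian (from continuity of the coefficients and compactness of $A$) gives well-posedness of the ODE system and existence of a maximizer, hence of an optimal feedback. You have simply written out in full the three ingredients the paper delegates to the reference — Cauchy--Lipschitz for the backward ODE, Dynkin's formula for the two verification inequalities, and a measurable selection of the argmax (needed only in $t$ since the state space is finite).
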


\begin{proof}
This is a standard verification theorem (see e.g. 
\cite[Thm. III.8.1]{fleso}) and existence of solution is given by the Lipschitz continuity of the Hamiltonian in \eqref{NODE}, which follows by the continuity of the coefficients and compactness of $A$. These properties also yield existence of a maximizer in \eqref{preH} and hence existence of an optimal feedback.
\end{proof}

\begin{rem} 
	The optimal control is not unique and there might exist non-exchangeable optimizers. We recall that a vector of stochastic processes is said to be exchangeable if its joint law is invariant under permutations.
	
	 Under Assumption \ref{B} the optimal control is unique. 
	\end{rem}

\begin{rem}
	For problem \eqref{costN}-\eqref{genN}, the choice of controls in Markovian feedback form is made for convenience only and is the most natural setup for this type of control problems. We could consider more general open-loop controls: in this case, the strategy vector $(\pi^1_t, \dots, \pi^N_t)_{t\in[0,T]}$ is a vector of predictable $A$-valued processes and the dynamics of the state process $\bm{X}$ can be defined as the solution of the controlled martingale problem related to the generator  \eqref{genN}, in which $\beta_k$ is replaced by the stochastic process $\pi^k$. Notably, the value function of this more general control problem still solves Equation \eqref{NODE}, which admits a unique solution, and thus these two control problems are equivalent. 
\end{rem}


\subsubsection{Mean field formulation}
We give another formulation of the $N$-agent optimal control, by restricting the class of admissible controls. Nevertheless, we will prove that the two formulations are equivalent, in the sense that the value function is the same and thus the infimum of the cost is the same. 

Assume then that the control is the same for any player and is given by a feedback (measurable) Markovian function 
$\alpha_N : [0,T]\times \ES \times S_d^N \rightarrow A$: we make the mean field assumption for which the control depends on the private state and on the state of  other players only through the empirical measure 
$\mu^N_t$ of the entire system. 
Namely, we assume that, for any $k \in \NN$,
\be 
\label{betaalpha}
\beta_k(t,\bm{x}) = \alpha_N(t, x_k, \mu^N_{\bm{x}}).
\ee 
 Thus we have 
\be 
\label{tranNmu}
\P(X^k_{t+h}=j | X^k_t=i, \mu^N_t =m)= 
Q_{i,j}(t,\alpha_N(t,i, m ), m) h +o(h),
\ee
as $h\rightarrow 0^+$. 

The aim of the players is 
 to choose $\alpha^N$ in order to minimize the cost in \eqref{costN}, which, assuming now  \eqref{betaalpha}, can be  rewritten as
\begin{align*} 
J^N(\alpha_N) &:= \frac1N \sum_{k=1}^N \E\left[ \int_0^T  f(t,X^k_t, \alpha_N (t, X^k_t, \mu^N_t), \mu^N_t ) dt + g(X^k_T,\mu^N_T)  \right] \\
&= \frac1N \sum_{k=1}^N \E\left[  \int_0^T 
\sum_{i\in\ES} \mathbbm{1}_{\{X^k_t=i\}}
  f(t, i, \alpha_N (t, i,\mu^N_t) , \mu^N_t)  dt + 
  \sum_{i\in\ES} \mathbbm{1}_{\{X^k_T=i\}} g(i,\mu^N_T) \right] ,
  \end{align*}
  and hence 
  \be 
  \label{costNmu}
   J^N(\alpha_N)=\E\left[  \int_0^T 
  \sum_{i\in\ES} \mu^N_{i,t}
  f^i(t,  \alpha_N (t, i,\mu^N_t) , \mu^N_t)  dt + 
  \sum_{i\in\ES} \mu^N_{i,T} g^i(\mu^N_T) \right].
\ee
Therefore the $N$-agent mean field control problem can be seen as a single optimization problem for the empirical measure, which is a time-inhomogeneous Markov chain on $S_d^N$ such that 
\be 
\label{ratemu}
\P\left(\left.\mu^N_{t+h}=m+\frac1N (\delta_j-\delta_i) \right| \mu^N_t =m\right)= N m_i Q_{i,j}(t, \alpha_N(t,i,m), m) h +o(h)
\ee
for any $m\in S_d^N$ and $i\neq j \in \ES$.
The control (in feedback form) is now the vector valued measurable function 
$\alpha^N(t,\cdot, m) \in A^d$.
 We remark that the dynamics remains in $S^N_d$ because $m+\frac1N (\delta_j-\delta_i)$ can be outside $S^N_d$ only if $m_i=0$, but in such case the transition rate is zero. 
Clearly, we assume that $\mu_0=  \frac1N \sum_{k=1}^N \delta_{X^i_0}$.
The generator of this Markov chain is hence given, for $v:S^N_d\rightarrow \R$, by
\be 
\label{genNmu}
\mathcal{L}^{N,\alpha_N}_t v(m) = N \sum_{i,j\in\ES} m_i Q_{i,j}(t,\alpha_N(t,i,m),m) \left[ v\left(m+\frac1N (\delta_j-\delta_i)\right) -v(m)\right].
\ee

The HJB equation for the value function of this problem is then
\[
-\frac{d}{dt} V^N(t,m)+\max_{a\in A^d}\bigg\{
-\mathcal{L}^{N,a} V^N(t,m) - \sum_{i\in \ES} m_i f^i(t,a^i,m)
\bigg\} =0,
\]
which rewrites as
\be
\label{HJBN}
\begin{split}
&-\frac{d}{dt}V^N + \sum_{i\in \ES} m_i H^i(t, m, D^{N,i}V^N(t,m)) =0\\
&V(T,m)=\sum_{i\in \ES} m_i g^i(m),
\end{split}
\ee
that is an ODE indexed by $m\in S^N_d$.

\begin{prop}
\label{prop5}
Under Assumption \ref{A}, the HJB equation \eqref{HJBN} admits a unique solution $V^N$ which is $\mathcal{C}^1$ in time; $V^N$ is the value function of the control problem \eqref{costNmu}-\eqref{ratemu}, defined as the infimum over feedback controls $\alpha_N:[0,T] \times S^N_d \rightarrow A^d$, and there exists an optimal feedback.  $V^N$ satisfies the Lipschitz property
\be 
\label{eq:23}
|V^N (t, m) -V^N(s,p) | \leq C (|t-s|+ |m-p|)
\ee
for a constant $C$ independent of $N$. 

Moreover, the value function $v^N$ of \eqref{costN}-\eqref{genN} satisfies 
\be
\label{vVN} 
v^N(t, \bm{x}) := V^N(t, \mu^N_{\bm{x}})
\ee 
 and thus the control problems
 \eqref{costN}-\eqref{genN} and \eqref{costNmu}-\eqref{ratemu} are equivalent, i.e. 
 \be 
 \label{JBA}
 \inf_{\bm{\beta}} J^N(\beta) = \inf_{\alpha_N} J^N(\alpha_N).
 \ee  
\end{prop}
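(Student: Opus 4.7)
My plan is to address the five assertions in turn. For the first, since $S^N_d$ is a finite set, \eqref{HJBN} is a backward ODE system on $[0,T]$ for the unknown $t\mapsto(V^N(t,m))_{m\in S^N_d}$. Under Assumption~\ref{A} the map $z\mapsto H^i(t,m,z)$ is globally Lipschitz (its slope is bounded by $\sup_{t,a,m}|Q_{i,\bullet}(t,a,m)|$, which is finite by continuity on a compact set), while $D^{N,i}V^N(t,m)$ depends linearly on finitely many components of $V^N(t,\cdot)$. The right-hand side of \eqref{HJBN} is therefore Lipschitz in the unknown vector and continuous in $t$, so Cauchy--Lipschitz yields a unique $\mathcal{C}^1$ solution on $[0,T]$.

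For the verification step, I apply Dynkin's formula to $V^N(t,\mu^N_t)$ along the controlled chain \eqref{ratemu}. The generator \eqref{genNmu} applied to $V^N(t,\cdot)$ satisfies
\[
\mathcal{L}^{N,\alpha_N}_t V^N(t,m)=\sum_{i\in\ES}m_i\bigl\langle Q_{i,\bullet}(t,\alpha_N(t,i,m),m),\,D^{N,i}V^N(t,m)\bigr\rangle,
\]
so combining with $\partial_tV^N$ from \eqref{HJBN} and the definition of $\mathcal{H}^i$ in \eqref{preH} yields $V^N(0,m)\le\E[J^N(\alpha_N)\mid\mu^N_0=m]$ for every admissible $\alpha_N$, with equality when $\alpha_N^\star(t,i,m)$ is a measurable selection of the maximizer in $\mathcal{H}^i(t,\cdot,m,D^{N,i}V^N(t,m))$. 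Such a selection exists by compactness of $A$, continuity of $\mathcal{H}^i$ and a standard measurable selection theorem.

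For identity \eqref{vVN}, I test the ansatz $v^N(t,\bm{x}):=V^N(t,\mu^N_{\bm{x}})$ in \eqref{NODE}. A direct computation yields, for any $k\in\NN$ and $j\in\ES$,
\[
N\,\Delta^k v^N(t,\bm{x})[j]=N\bigl[V^N(t,\mu^N_{\bm{x}}+\tfrac{1}{N}(\delta_j-\delta_{x_k}))-V^N(t,\mu^N_{\bm{x}})\bigr]=D^{N,x_k}_j V^N(t,\mu^N_{\bm{x}}).
\]
Grouping $\tfrac1N\sum_{k=1}^N$ by the value of $x_k$ (and using $\#\{k:x_k=i\}=N\mu^N_{\bm{x},i}$) turns the Hamiltonian term in \eqref{NODE} into $\sum_i\mu^N_{\bm{x},i}H^i(t,\mu^N_{\bm{x}},D^{N,i}V^N(t,\mu^N_{\bm{x}}))$, matching \eqref{HJBN}; the terminal condition matches by \eqref{defFG}. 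Uniqueness for \eqref{NODE} therefore implies \eqref{vVN}. The equivalence \eqref{JBA} follows: mean-field controls \eqref{betaalpha} form a particular subclass of admissible feedbacks, so $\inf_{\bm{\beta}}J^N\le\inf_{\alpha_N}J^N$, and the verification step combined with \eqref{vVN} shows that both infima, once integrated against the law of $\bm{X}_0$, coincide with $\E[V^N(0,\mu^N_0)]$.

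For \eqref{eq:23}, I would first establish the spatial bound via \eqref{vVN}: it suffices to prove $|v^N(t,\bm{x})-v^N(t,\bm{y})|\le(C/N)\,\#\{k:x_k\ne y_k\}$ with $C$ independent of $N$. Fixing an optimal feedback $\bm{\beta}^\star$ for $\bm{x}$, applying it also to $\bm{y}$ and coupling the two $N$-agent chains so that agents sharing a coordinate share their jump clocks, the number of pairwise disagreements $D_t$ satisfies $\E[D_t]\le D_0\, e^{Ct}$ by Gronwall (the coefficients of the linear ODE governing $\E[D_t]$ are bounded under Assumption~\ref{A}); the Lipschitz dependence of $f,g$ on $m$, the bound $|\mu^N_{\bm{X}^x_t}-\mu^N_{\bm{X}^y_t}|\le 2D_t/N$ and the $1/N$ normalization in \eqref{costN} then give the spatial estimate. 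The time Lipschitz bound follows by inserting the resulting uniform bound on $D^{N,i}V^N$ back into \eqref{HJBN}. I expect this coupling step to be the main technical point: a direct difference argument on \eqref{HJBN} would produce constants scaling with $|S^N_d|$, whereas the $N$-agent representation \eqref{vVN} yields a genuinely $N$-independent Lipschitz bound.
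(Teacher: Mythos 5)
Most of your proposal follows the paper's own route: Cauchy--Lipschitz for existence and uniqueness of the $\mathcal{C}^1$ solution of \eqref{HJBN}, a standard verification argument with a measurable selection of the maximizer for optimality, and substitution of the ansatz $w^N(t,\bm{x})=V^N(t,\mu^N_{\bm{x}})$ into \eqref{NODE} followed by uniqueness to obtain \eqref{vVN} and \eqref{JBA}. Those parts are correct and essentially identical to the paper's proof.

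The genuine gap is in your argument for \eqref{eq:23}. You fix an optimal feedback $\bm{\beta}^\star$ for the chain started at $\bm{x}$ and apply it to the chain started at $\bm{y}$, coupling agents whose coordinates agree. For the disagreement count $D_t$ to obey a Gronwall bound you need the jump rates of two coupled agents sitting in the same state to differ by $O(|\mu^N_{\bm{X}^x_t}-\mu^N_{\bm{X}^y_t}|)=O(D_t/N)$. But under Assumption \ref{A} the optimal feedback is only a measurable selection of $\arg\max_a\mathcal{H}^{x_k}(t,a,\mu^N_{\bm{x}},N\Delta^k v^N)$: the maximizer need not be unique and the selection need not be continuous in the empirical measure (and $Q$ is only continuous, not Lipschitz, in $a$), so $Q_{x_k,j}(t,\beta^\star_k(t,\bm{X}^x_t),\mu^N_{\bm{X}^x_t})$ and $Q_{x_k,j}(t,\beta^\star_k(t,\bm{X}^y_t),\mu^N_{\bm{X}^y_t})$ can differ by $O(1)$ even when the configurations are close and $X^{x,k}_t=X^{y,k}_t$. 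The coupling then loses agents at a rate not controlled by $D_t$, and the linear ODE for $\E[D_t]$ you invoke does not hold. The paper avoids this exactly here: by Remark \ref{remeq} the value function is unchanged when one restricts to controls $\alpha:[0,T]\to A^d$ depending only on time and the private state, so it takes an $\epsilon$-optimal control of this restricted form for the starting point $m$ and applies the \emph{same deterministic} control to both copies; the rates then differ only through the Lipschitz dependence of $Q$ on $m$ (Assumption (\ref{A}2)), Gronwall applies, and letting $\epsilon\to0$ gives the spatial bound. With this modification your coupling (equivalently, the paper's Poisson-random-measure representation carried out directly at the level of $\mu^N$) goes through, and your derivation of the time-Lipschitz bound by reinserting the uniform bound on $D^{N,i}V^N$ into \eqref{HJBN} is then correct.
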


\begin{proof} 
	The first claim follows by the Lipschitz continuity of $H$ and a standard verification theorem, an optimal feedback being a measurable function that attains the maximum in \eqref{preH}, which exists by compactness of $A$ and continuity of the coefficients.   
	The Lipschitz continuity of $V^N$ is proved in Section \ref{Sec:4.1}, Lemma \ref{lemlip}. If $w^N$ is the function defined by the r.h.s. of \eqref{vVN}, then 
	\begin{align*} 
		&-\frac{d}{dt} w^N(t,\bm{x}) + \frac1N \sum_{k=1}^N H^{x_k}
		(t, \mu^N_{\bm{x}}, N  (w^N(t,j,\bm{x}^{-k}) -w^N(t,\bm{x})  )_{j\in\ES})  \\
		&= -\frac{d}{dt} V^N(t, \mu^N_{\bm{x}}) + \frac1N \sum_{k=1}^N \sum_{i\in \ES} \mathbbm{1}_{\{x_k=i\}}H^{i}
		(t, \mu^N_{\bm{x}}, N (V^N(t, \mu^N_{\bm{x}} +\frac1N(\delta_j-\delta_i)  )  -V^N(t, \mu^N_{\bm{x}})  )_{j\in\ES} )\\
		&= -\frac{d}{dt} V^N(t, \mu^N_{\bm{x}}) +  \sum_{i\in \ES} 
		\mu^N_{\bm{x}}[i]
		H^{i}
		(t, \mu^N_{\bm{x}}, D^{N,i}V^N(t, \mu^N_{\bm{x}}) )  =0
	\end{align*}
	because $V^N$ solves \eqref{HJBN}. Hence, by uniqueness of the solution to \eqref{NODE}, we have $w^N = v^N$ and thus \eqref{vVN} is satisfied.	
\end{proof}

\begin{rem}
\label{remeq}
	We could restrict the class of admissible controls to the set of 
	$\alpha_N:[0,T] \rightarrow A^d$ that are deterministic functions only of time and of the private state $i\in \ES$. This control problem is equivalent to what we consider here because its value function $W^N$ is clearly Lipschitz in time, and hence absolutely continuous, and then, by the dynamic programming principle, it is easy to show that $W^N$ solves \eqref{HJBN} at any point of differentiability. Thus, by uniqueness of the solution to \eqref{HJBN} (defined in the sense of Caratheodory in the class of absolutely continuous functions)
	 we get $V^N = W^N$, which means that the costs have the same infimum. 
	 
	 We could further restrict the class of admissible controls, for instance,  to the set of piecewise constant (deterministic) functions of time. This setting is the one considered in \cite{kol}.
\end{rem}

If assumption \ref{B} holds, then the  optimal control is unique, since the maximizer in \eqref{preH} is unique (see e.g. \cite[Thm. 5]{cecfis}).
The control is the transition rate $\alpha_N(t,i,m) \in [0,M]^d$, which we denote as a transition matrix $\alpha_N = (\alpha_N^{i,j})_{i,j\in\ES} \in [0,M]^{d\times d}$.
In such case, \eqref{tranNmu} becomes simply
\be 
\label{tranopt}
\P(X^k_{t+h}=j | X^k_t=i, \mu^N_t =m)= 
\alpha_N^{i,j}(t, m ) h +o(h), 
\ee
and the dynamics of $\mu^N$ is then given by
\be 
\label{muNopt}
\P\left(\left.\mu^N_{t+h}=m+\frac1N (\delta_j-\delta_i) \right| \mu^N_t =m\right)= N m_i \alpha_N^{i,j}(t,m)h +o(h).
\ee
Note that the values of $\alpha^{i,i}$ never enter in the dynamics.  Then it follows:
\begin{prop}
Under Assumption \ref{B}, there exists a unique optimal control. It is  given by 
\be 
\label{optalN}
\alpha_{N,*}^{i,j}(t,m) := a^*_j (t,i,m, D^{N,i}V^N(t,m) ).
\ee	 
\end{prop}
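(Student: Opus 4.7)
The plan is a direct verification argument combined with the uniqueness of the maximizer of the pre-Hamiltonian. Since existence of an optimal feedback is already provided by Proposition \ref{prop5}, the only thing left to show is that such an optimal feedback is forced to coincide with the formula \eqref{optalN}.

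First, I would observe that under Assumption \ref{B}, using $Q_{i,j}(t,a,m)=a_j$ and $z_i=0$, the pre-Hamiltonian simplifies to
\[
\mathcal{H}^i(t,a,m,z) = -\sum_{j\neq i} a_j z_j - f^i(t,a,m),
\]
which, by condition (B3), is strictly concave in $a\in A=[0,M]^d$ with modulus $\lambda>0$. Consequently the supremum in the definition \eqref{H} of $H^i(t,m,z)$ is attained at a unique point $a^*(t,i,m,z)$; this is precisely the maximizer recalled after Assumption \ref{B}, which is furthermore Lipschitz in $(m,z)$.

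Next, I would invoke Proposition \ref{prop5}: any optimal feedback $\alpha^*_N$ for the control problem \eqref{costNmu}--\eqref{ratemu} must realize the pointwise supremum in the HJB equation \eqref{HJBN}, namely
\[
\mathcal{H}^i\bigl(t,\alpha^*_N(t,i,m),m,D^{N,i}V^N(t,m)\bigr) = H^i\bigl(t,m,D^{N,i}V^N(t,m)\bigr)
\]
for every $i\in\ES$ with $m_i>0$ and every $m\in S_d^N$. The uniqueness of the maximizer in the previous step then forces $\alpha^*_N(t,i,m) = a^*(t,i,m,D^{N,i}V^N(t,m))$, which is exactly \eqref{optalN}. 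On the states where $m_i=0$, the value of $\alpha^{i,j}_N$ enters neither the dynamics \eqref{muNopt} nor the cost \eqref{costNmu}, so extending the formula by the same prescription is harmless and produces a globally defined feedback; the same argument applied to any hypothetical second optimal control gives uniqueness in the only meaningful sense.

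I do not expect any substantial obstacle. The only mildly technical point is verifying that $(t,i,m)\mapsto a^*(t,i,m,D^{N,i}V^N(t,m))$ is an admissible feedback, i.e. measurable and $A$-valued; this follows immediately from the continuity of $a^*$ in $(m,z)$, the $\mathcal{C}^1$-in-time regularity of $V^N$ granted by Proposition \ref{prop5}, and the compactness of $A$. Thus the whole proposition is essentially a corollary of Proposition \ref{prop5} and the uniqueness-of-maximizer result of \cite{gomes}.
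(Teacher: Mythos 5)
Your argument is correct and is essentially the one the paper intends: the paper gives no separate proof but derives the proposition directly from the verification theorem of Proposition \ref{prop5} (an optimal feedback must attain the maximum in the pre-Hamiltonian evaluated at $D^{N,i}V^N$) together with the uniqueness of that maximizer under Assumption \ref{B}, noting as you do that the values of the control at states with $m_i=0$ (and the diagonal entries $\alpha^{i,i}$) never enter the dynamics or the cost. Your handling of strict concavity in $a$ and of measurability is consistent with the paper's reliance on Proposition 1 of \cite{gomes} and \cite[Thm.~5]{cecfis}.
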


\subsection{Mean field control problem}

In the limit, there is a single player which evolves according to 
\be 
\label{limX}
\P(X_{t+h}=j | X_t=i)= Q_{i,j}(t, \alpha^i(t), \Law(X_t)) h +o(h)
\ee
and $\Law (X_0)=m_0$. The control is here a deterministic measurable function 
$\alpha:[0,T] \rightarrow A^d$, or equivalently $\alpha:[0,T]\times \ES \rightarrow A$, which is indeed a feedback function of the state $X_t$, denoted by $i\in \ES$. As a particular case, this set includes the functions $[0,T] \times S_d \ni (t,\Law(X_t))\rightarrow \tilde{\alpha} (t,\Law(X_t)) \in A^d$, since $t\rightarrow \Law(X_t)$ is a deterministic function of time. 
The reference player aims at minimizing the cost
\be
\label{costmf} 
J(\alpha) :=  \E\left[ \int_0^T   f(t, X_t, \alpha (t, X_t),  \Law(X_t) )  dt + g(X_T,\Law(X_T))  \right].
\ee
The problem can be recasted into a deterministic control problem for the dynamics of the law of $X$, thanks to the fact that we consider Markovian feedback controls only. Indeed, denoting $\mu_t = \Law(X_t)$, i.e. $\mu_t^i = \P(X_t=i)$, the cost is written as 
\be 
\label{costmu}
J(\alpha) = \int_0^T \sum_{i\in \ES}  f^i( t,\alpha^i (t),\mu_t) \mu^i_{t} dt + \sum_{i\in \ES}g^i(\mu_T)\mu^i_{T} ,
\ee
where $\mu$ solves the ODE, indexed by $i\in\ES$, 
\footnote{
For the deterministic control problem \eqref{costmu}-\eqref{mu} the control $\alpha:[0,T]\rightarrow A^d$ is indeed in open-loop form, as it is just a measurable function of time. 
}

\be 
\label{mu}
\begin{split}
&\frac{d}{dt}\mu^i_t = \sum_{j\in\ES} \Big( \mu^j_t Q_{j,i}(t,\alpha^j(t), \mu_t)  - \mu^i_t Q_{i,j}(t,\alpha^i(t),\mu_t)\Big)\\
& \mu_0 =m_0.
\end{split}
\ee
The HJB equation for the value function $V$ of this problem is then
\[
-\partial_t V(t,m)+\max_{a\in A^d}\left\{
-\sum_{i,j\in\ES} m_i Q_{i,j}(t, a^i,m) \partial_{m_j-m_i} V(t,m)   - \sum_{i\in \ES} m_i f^i(t,a^i,m)
\right\}=0,
\]
which rewrites as 
\be
\label{HJB}
\begin{split}
& -\partial_t V(t,m) +  \sum_{i\in \ES} m_i H^i(t, m, D^{i}V(t,m)) =0,\\
&V(T,m)=\sum_{i\in \ES} m_i g^i(m),
\end{split}
\ee
in which we recall that 
$
[D^{i} V(t,m)]_j:= \partial_{m_j-m_i}V(t,m).
$

The peculiarity of this first-order equation is that it is stated in the simplex, which is a bounded domain, but there are no boundary conditions. This is explained by the fact that the simplex is invariant for the dynamics \eqref{mu}, and so is its interior. The first order HJB equation has no classical solutions in general,  and for this reason viscosity solutions were introduced. These are defined properly in the next section, by viewing the simplex as a subset of $\R^{d-1}$ instead of $\R^d$. Viscosity solutions can be defined either on $S_d$ or on $\Int$, depending on the boundary regularity of the test functions involved. One problem with defining viscosity solutions on $S_d$ is that it is not clear that a classical solution is a viscosity solution on $S_d$; however, this definition is the one we will use to prove convergence of $V^N$ to $V$. Exploiting the fact that $\Int$ is invariant for the state dynamics, which results in a property on the subdifferential of $H$, it is possible to show uniqueness of viscosity solutions on $\Int$; see the comparison principle below (Theorem \ref{thmcomparison}).  

If \ref{B} holds, we denote as above $\alpha_{i,j}(t)=\alpha_j(t,i)$, which is the transition rate matrix.
In the next section, we prove the following:

\begin{thm}
\label{thm:mainmfc}
	Let $V$ be the value function of the deterministic control problem \eqref{costmu}-\eqref{mu}:
	\begin{enumerate}
	\item if Assumption \ref{A} holds,  then $V$ is the unique viscosity solution of \eqref{HJB} on $S_d$, and $V$ is Lipschitz-continuous in $(t,m)$; 
	\item if Assumption \ref{A} holds, then $V$ is the unique viscosity solution of \eqref{HJB} on $\Int$, and, if \ref{B} holds, there exists an optimal control; 
	\item if Assumption \ref{C} holds, then $V\in \mathcal{C}^{1,1}([0,T]\times S_d)$ and is the unique classical solution of \eqref{HJB};
	\item if $V\in \mathcal{C}^{1,1}(S_d)$ and \ref{B} holds, then 
	  the control  given by the feedback 
	\be 
	\label{alphaopt}
	\alpha_{*}^{i,j}(t,m) := a^*_j (t,i,m, D^{i}V(t,m) )
	\ee  
	is the unique optimal control, in the sense that any optimal control $\alpha:[0,T]\rightarrow [0,M]^{d\times d}$, with related optimal process $\mu$ is such that $\alpha(t) = \alpha_*(t,\mu_t)$ for $dt$-a.e. $t\in [0,T]$.  
	\end{enumerate}
\end{thm}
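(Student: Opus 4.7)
The plan is to treat the four parts in order, leveraging the deterministic control formulation \eqref{costmu}--\eqref{mu}.

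For parts (1) and (2), I would first establish the Lipschitz regularity of $V$. Under Assumption \ref{A} the right-hand side of \eqref{mu} is Lipschitz in $m$ uniformly in $\alpha$, so Gronwall gives uniform Lipschitz continuity of the flow $m\mapsto \mu_t^{m,\alpha}$; combined with the Lipschitz regularity of $F$ and $G$ in $m$, this yields Lipschitz continuity of $V$ in $m$, while Lipschitz continuity in $t$ follows from the dynamic programming principle and the uniform bound on $F$. The DPP itself is routine for a deterministic control problem with bounded measurable controls. From the DPP, the standard Bellman argument (phrased in the local chart $\hat{S}_d$) shows that $V$ is a viscosity solution of \eqref{HJB} on $S_d$, hence also on $\Int$ by restriction. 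Uniqueness on $\Int$ is supplied by Theorem \ref{thmcomparison}. Uniqueness on $S_d$ is then immediate: any two viscosity solutions on $S_d$ restrict to viscosity solutions on $\Int$, coincide there, and agree on $S_d$ by continuity up to the boundary. Existence of an optimal control under Assumption \ref{B} follows by the direct method: by (B2) the dynamics is linear in $\alpha$, and by (B3) the integrand $\sum_i m^i f^i(t,a^i,m)$ is continuous and convex in $a$, so a minimizing sequence of controls in $L^\infty([0,T];A^d)$ admits a weak-$\ast$ limit along which the cost is lower semicontinuous and the trajectory converges.

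For part (3), the crucial step is the reparametrization $w_{i,j}(t) := \mu_t^i\,\alpha_{i,j}(t)$ foreshadowed by (C2). In the variables $(\mu,w)$ the state equation \eqref{mu} becomes \emph{linear}, the admissibility constraint reads $0\le w_{i,j}\le M\mu^i$, and the running cost coincides with the function of $(w,m)$ assumed convex in (C2); together with convexity of $G$ this turns the MFCP into a convex optimal control problem with linear dynamics. A standard convex-analytic argument (infima of convex functions over a convex admissible set) then shows that $V(t,\cdot)$ is convex in $m$ for every $t\in[0,T]$. In the other direction, the Lipschitz regularity of $\nabla F$, $\nabla G$ and $\nabla_m Q$ built into (C1) yields semiconcavity of $V$ in $m$ with linear modulus, via the classical perturbation-of-the-optimal-trajectory argument of Cannarsa--Sinestrari. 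Convex plus semiconcave with linear modulus is precisely $\mathcal{C}^{1,1}$, so $V(t,\cdot)\in\mathcal{C}^{1,1}(S_d)$ uniformly in $t$; the Lipschitz regularity of $\partial_t V$ then follows from \eqref{HJB}, whose right-hand side is Lipschitz in $(t,m,z)$, and uniqueness among classical solutions is a consequence of the comparison principle for viscosity solutions.

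For part (4), I would apply a verification argument. Since $V\in\mathcal{C}^{1,1}$ the chain rule yields, for any admissible $\alpha$ with trajectory $\mu$,
\[
\frac{d}{dt}V(t,\mu_t) \;=\; \partial_t V(t,\mu_t) + \sum_{i,j}\mu_t^i\, Q_{i,j}\bigl(t,\alpha^i(t),\mu_t\bigr)\,\partial_{m_j-m_i}V(t,\mu_t).
\]
Combining the inequality $H^i(t,m,z)\ge \mathcal{H}^i(t,a,m,z)$ with \eqref{HJB} gives
\[
\frac{d}{dt}V(t,\mu_t) \;\ge\; -\sum_i \mu_t^i f^i\bigl(t,\alpha^i(t),\mu_t\bigr),
\]
with equality for a.e.\ $t$ iff $\alpha^i(t)$ is a maximizer of $\mathcal{H}^i(t,\cdot,\mu_t,D^iV(t,\mu_t))$. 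Integrating from $t$ to $T$ and using $V(T,\cdot)=G$ yields $V(t,m)\le J(\alpha)$, with equality precisely when $\alpha$ coincides a.e.\ with the feedback \eqref{alphaopt}; under Assumption \ref{B} that maximizer is unique, giving both optimality of $\alpha_*$ and the claimed uniqueness of optimal controls.

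The main obstacle is part (3). The reparametrization $w=m\cdot\alpha$ is singular on $\partial S_d$ (since $w_{i,j}\to 0$ as $m_i\to 0$) and the resulting convex problem is state-constrained. Propagating convexity and semiconcavity of $V$ uniformly up to the boundary of the simplex, without blow-ups stemming from this degeneracy, is where the real work lies, and is closely tied to the invariance of $\Int$ that also underpins Theorem \ref{thmcomparison}.
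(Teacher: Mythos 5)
Your proposal follows essentially the same route as the paper: Lipschitz regularity and the viscosity property from standard deterministic control theory (via the local chart and the invariance of $\Int$), uniqueness from the comparison principle of Theorem \ref{thmcomparison}, existence of optimizers from convexity in $a$, the reparametrization $w=m\cdot\alpha$ to get convexity of $V(t,\cdot)$, semiconcavity from (\ref{C}1), and a verification argument for part (4). Two local deviations are worth noting. First, for uniqueness on $S_d$ you restrict solutions to $\Int$ and invoke the comparison principle there; this works (an $S_d$-solution is an $\Int$-solution by a cutoff argument, and the comparison principle extends the ordering to the closed simplex by continuity) and is arguably cleaner than the paper's direct doubling argument on $S_d$ with globally defined quadratic test functions, though it makes part (1) logically dependent on Theorem \ref{thmcomparison} rather than self-contained. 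Second, in part (3) your passage from ``$V(t,\cdot)\in\mathcal{C}^{1,1}(S_d)$ uniformly in $t$'' to $V\in\mathcal{C}^{1,1}([0,T]\times S_d)$ is under-powered: reading $\partial_t V$ off \eqref{HJB} requires $D^iV$ to be Lipschitz in $(t,m)$ jointly, not merely in $m$ uniformly in $t$, so the argument is circular as stated. The paper closes this by establishing \emph{joint} time--space semiconvexity (via \cite[Thm.\ 7.4.13]{cannarsa} applied to the original formulation, whose coefficients are globally Lipschitz on $\Int$) to pair with the joint time--space semiconcavity from (\ref{C}1); you should replace your convexity-in-$m$-only step with this joint statement. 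Your closing concern about the degeneracy of the $w$-formulation at $\partial S_d$ is exactly the point the paper resolves by the invariance estimate \eqref{invInt}.
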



\subsection{Convergence results} 

We state here the  results about the convergence, as $N\rightarrow \infty$, of the value function $V^N$ of the $N$-agent optimization \eqref{costNmu}-\eqref{ratemu} to the value function $V$ of the mean field control problem 
\eqref{costmu}-\eqref{mu}, with a convergence rate of order $1/\sqrt{N}$. We recall that $V^N$ is the classical solution to ODE \eqref{HJBN}, while $V$ is the viscosity solution to PDE \eqref{HJB}. 
The following is our main result:

\begin{thm}
\label{thmconvV}
Under Assumption \ref{A},
\be 
\label{convV}
\max_{t\in [0,T], m\in S_d^N} 
	\left|V^N(t,m) - V(t,m)\right| \leq \frac{C}{\sqrt{N}}.
\ee
\end{thm}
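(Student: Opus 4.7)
The plan is to adapt the Crandall--Lions doubling-of-variables technique, viewing the ODE \eqref{HJBN} satisfied by $V^N$ as a finite difference scheme for the HJB equation \eqref{HJB} satisfied by $V$ in the viscosity sense. I will prove the two-sided bound by establishing each inequality separately; the arguments are symmetric, so I describe only the direction $V^N - V \leq C/\sqrt N$.

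The first step is to establish that $V^N$ is an approximate viscosity subsolution of \eqref{HJB} with a quantitative consistency error: whenever $\phi \in \mathcal{C}^{1,1}([0,T]\times S_d)$ is such that $V^N - \phi$ attains a local maximum at a point $(\bar t, \bar m) \in [0,T] \times S_d^N$, I aim to show
\[
-\partial_t \phi(\bar t,\bar m) + \sum_i \bar m_i H^i(\bar t,\bar m, D^i\phi(\bar t,\bar m)) \leq \frac{C \|D^2\phi\|_\infty}{N}.
\]
The maximum property applied at the admissible neighbour $\bar m + \frac{1}{N}(\delta_j - \delta_i) \in S_d^N$ (valid whenever $\bar m_i \geq 1/N$, while indices $i$ with $\bar m_i = 0$ give a vanishing contribution to the sum) yields the one-sided discrete gradient bound $D^{N,i}_j V^N(\bar t,\bar m) \leq D^{N,i}_j \phi(\bar t,\bar m)$, while a Taylor expansion of $\phi$ gives $D^{N,i}_j \phi = D^i_j \phi + O(\|D^2\phi\|_\infty/N)$. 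The critical structural ingredient is that since $Q_{i,j}(t,a,m) \geq 0$ for $j \neq i$, the Hamiltonian $H^i$ defined in \eqref{H} is non-increasing in $z_j$ for $j \neq i$; this monotonicity, combined with the Lipschitz dependence of $H^i$ on $z$, converts the one-sided gradient bound into the lower bound $H^i(\bar t,\bar m, D^{N,i} V^N) \geq H^i(\bar t,\bar m, D^i\phi) - C\|D^2\phi\|_\infty/N$, and the ODE \eqref{HJBN} then yields the claim. An entirely analogous computation (reverse one-sided bound at a local minimum) shows that $V^N$ is also an approximate viscosity supersolution with the same error.

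With this preparation, the main estimate proceeds by doubling of variables. For parameters $\gamma,\delta>0$ to be chosen, set $\tilde V^N(t,m):=V^N(t,m)-\gamma(T-t)$, which becomes a strict approximate subsolution (the right-hand side above is now $-\gamma+C\|D^2\phi\|_\infty/N$), and consider
\[
\Phi(t,m,s,m') := \tilde V^N(t,m) - V(s,m') - \frac{|t-s|^2 + |m-m'|^2}{2\delta}
\]
on $[0,T]\times S_d^N \times [0,T] \times S_d$; let $(\bar t,\bar m,\bar s,\bar m')$ achieve the supremum. Writing $M:=\sup_{(t,m)}[V^N(t,m)-V(t,m)]$, evaluation on the diagonal gives $\sup\Phi\geq M-\gamma T$. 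The uniform Lipschitz continuity of $V^N$ (Proposition \ref{prop5}) and of $V$ (Theorem \ref{thm:mainmfc}(1)), via the classical penalisation estimate, forces $|\bar t-\bar s|+|\bar m-\bar m'|\leq C\delta$, so that the penalty-induced gradient $z^*_{i,j} := (\bar m_j - \bar m'_j - \bar m_i + \bar m'_i)/\delta$ satisfies the uniform bound $|z^*| \leq C$. The terminal boundary case $\bar t=T$ or $\bar s=T$ is handled directly: using $V^N(T,\cdot)=V(T,\cdot)=G$ and the Lipschitz continuity of $G$, one obtains $\sup\Phi\leq L^2\delta$ by completion of the square. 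In every other configuration, including the initial-time boundary $\bar t=0$ or $\bar s=0$ (at which the viscosity super inequality for $V$ still applies, since \eqref{HJB} is posed on $[0,T)\times S_d$, and the approximate subsolution property of $\tilde V^N$ remains valid by a one-sided version of the same argument), the approximate subsolution of $\tilde V^N$ applied to the test function $\phi(t,m)=(|t-\bar s|^2+|m-\bar m'|^2)/(2\delta)$, for which $\|D^2\phi\|_\infty\lesssim 1/\delta$, combined with the viscosity supersolution inequality for $V$ at $(\bar s,\bar m')$, yields after subtraction
\[
\sum_i \bar m_i H^i(\bar t,\bar m,z^*) - \sum_i \bar m'_i H^i(\bar s,\bar m',z^*) \leq -\gamma + \frac{C}{N\delta}.
\]
The Lipschitz continuity of $(t,m)\mapsto \sum_i m_iH^i(t,m,z^*)$ noted after \eqref{H}, with constant controlled by $|z^*|\leq C$, bounds the left-hand side from below by $-C\delta$, so such a non-terminal configuration is impossible as soon as $\gamma > C\delta + C/(N\delta)$. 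Choosing $\gamma$ above this threshold leaves $\sup\Phi\leq L^2\delta$, hence $M\leq L^2\delta+\gamma T$; optimising $\delta$ of order $1/\sqrt N$ makes all the error terms of order $1/\sqrt N$ and yields $M \leq C/\sqrt N$. The reverse inequality $V - V^N \leq C/\sqrt N$ is obtained symmetrically, by applying the doubling argument with the strict subsolution perturbation now on $V$ (namely $\tilde V(s,m'):=V(s,m')-\gamma(T-s)$) and invoking the approximate supersolution property of $V^N$ from the first step.

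The main obstacle is the first step, namely the quantitative approximate sub/supersolution property of $V^N$. The delicate point is that one has only a one-sided bound on the discrete gradient $D^{N,i} V^N$, and converting this into a useful inequality on $H^i$ requires the monotonicity of $H^i$ in the off-diagonal directions $z_j$, $j\neq i$; this monotonicity is a direct reflection of the sign $Q_{i,j}\geq 0$ of the off-diagonal transition rates and is the structural feature of Markov-chain control that powers the proof. A related subtlety is that the shifted state $\bar m+(\delta_j-\delta_i)/N$ need not lie in $S_d^N$ when $\bar m_i=0$, but at such boundary points the $i$-th term of $\sum_i \bar m_i H^i$ vanishes, so no information is needed from $D^{N,i} V^N(\bar t,\bar m)$ in that direction.
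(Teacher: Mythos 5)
Your argument is correct and rests on the same core strategy as the paper's proof: a doubling-of-variables comparison between the viscosity solution $V$ of \eqref{HJB} and the solution $V^N$ of \eqref{HJBN}, viewed as a monotone (in the sense $Q_{i,j}\geq 0$) finite difference scheme, with the consistency error $C/(N\epsilon)$ coming from the quadratic penalty balanced against the penalisation error $C\epsilon$ and optimised at $\epsilon\sim N^{-1/2}$. The differences are in the technical execution, and two of them are worth recording. First, you package the scheme's properties into a standalone quantitative lemma (the ``approximate viscosity sub/supersolution'' property of $V^N$ with error $C\|D^2\phi\|_\infty/N$), obtained from the one-sided discrete gradient bound at a grid maximum plus the monotonicity of $H^i$ in $z_j$, $j\neq i$; the paper performs exactly the same two steps, but inline, in passing from \eqref{31} to \eqref{34}. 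Second, and more substantively, you take the supremum of the doubled function over $[0,T]\times S_d^N\times[0,T]\times S_d$, keeping the first spatial variable on the finite grid so that the supremum is trivially attained. This bypasses the paper's most delicate device: the piecewise-constant extension $\tilde V^N$ of $V^N$ to the whole simplex, whose values on the cell boundaries must be chosen a posteriori so that the maximum exists and the matching condition \eqref{eq46} holds. Your route is cleaner on this point and loses nothing, since the discrete gradient information is read off directly from comparison with lattice neighbours (and the directions with $\bar m_i=0$ are correctly discarded because their contribution to $\sum_i\bar m_i H^i$ vanishes). Finally, you enforce strictness via the perturbation $\gamma(T-t)$ and conclude by contradiction in the non-terminal case, where the paper instead inserts the term $\frac{2T-t-s}{4T}E_N^+$ into the doubled function and closes a self-improving inequality $E_N^+\leq C\epsilon+\frac12 E_N^+$; these are interchangeable devices. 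One small point of care: the ``local maximum'' in your consistency lemma must be understood over a neighbourhood large enough to contain the lattice neighbours $\bar m+\frac1N(\delta_j-\delta_i)$ --- harmless here, since in the application the maximum is global over the finite grid.
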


The theorem is proved in Section \ref{Sec:4.1}. As announced in the Introduction, we exploit the characterization of $V$ as the viscosity solution to \eqref{HJB} in order to prove the result. In fact, ODE \eqref{HJBN} can be seen as a finite difference scheme for the PDE \eqref{HJB}, even if time is still continuous. Indeed the argument $D^{N,i}V$ of the Hamiltonian in \eqref{HJBN} converges, at least formally, to $D^i V$ appearing in \eqref{HJB}, as 
\be 
\lim_{N\rightarrow \infty} N \bigg[ V \bigg( m+\frac1N(\delta_j-\delta_i)\bigg) -V(m) \bigg] = \partial_{m_j-m_i} V(m).
\ee 


This result also permits to construct quasi-optimal controls for the $N$-agent optimization, starting from quasi-optimal controls for the MFCP, with an explicit rate of approximation.

\begin{thm}
\label{thmappr}
Assume \ref{A} and fix $\epsilon>0$ and $N\in \mathbb{N}$. Let $\alpha:[0,T] \rightarrow A^d$ be an $\epsilon$-optimal control for the MFCP. Then 
\be 
\label{appr}
J^N(\alpha) \leq \inf_{\alpha_N} J^N(\alpha_N) + \frac{C}{\sqrt{N}} + \epsilon.
\ee 
\end{thm}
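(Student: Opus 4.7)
The plan is to chain three estimates: (i) a propagation-of-chaos bound for the $N$-agent cost driven by $\alpha$, (ii) the $\epsilon$-optimality of $\alpha$ for the MFCP, and (iii) the value-function convergence of Theorem \ref{thmconvV}. The MFCP control $\alpha:[0,T]\to A^d$ is plugged into the $N$-agent system as the feedback $\beta_k(t,\bm{x}) := \alpha(t, x_k)$, equivalently the measure-independent $\alpha_N(t,i,m) := \alpha^i(t)$ of the mean-field formulation of Proposition \ref{prop5}.

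\textbf{Step 1 (propagation of chaos for the prescribed $\alpha$).} Under this feedback, $\mu^N$ is the Markov chain on $S_d^N$ with jump rates $N m_i Q_{i,j}(t,\alpha^i(t),m)$, and I would compare it with the deterministic solution $\mu$ of \eqref{mu} driven by the same $\alpha$. Writing the Dynkin decomposition $\mu^N_t = \mu_0^N + \int_0^t b(s,\mu^N_s)\,ds + M^N_t$, where $b(s,m)_i = \sum_j (m_j Q_{j,i}(s,\alpha^j(s),m) - m_i Q_{i,j}(s,\alpha^i(s),m))$ is precisely the drift of \eqref{mu} and $M^N$ is a pure-jump martingale whose predictable quadratic variation is $O(1/N)$ (jumps of size $1/N$, rate $O(N)$), Lipschitz continuity of $Q$ in $m$ and Gr\"onwall's lemma yield
\[
\sup_{t\in[0,T]} \E\left[|\mu^N_t - \mu_t|^2\right]^{1/2} \leq \frac{C}{\sqrt{N}},
\]
where the initial-condition contribution $\E[|\mu_0^N - m_0|^2]^{1/2}\le C/\sqrt{N}$ is the classical variance bound for empirical measures of i.i.d.\ samples. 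Using Lipschitz continuity of $F$ and $G$ (Assumption \ref{A}) together with the explicit forms \eqref{costNmu} and \eqref{costmu}, this upgrades to $J^N(\alpha) \leq J(\alpha) + C/\sqrt{N}$.

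\textbf{Step 2 (combining with $\epsilon$-optimality and Theorem \ref{thmconvV}).} By $\epsilon$-optimality, $J(\alpha) \leq V(0, m_0) + \epsilon$. Theorem \ref{thmconvV} gives $V(0,\mu_0^N) \leq V^N(0,\mu_0^N) + C/\sqrt{N}$ almost surely, since $\mu_0^N\in S_d^N$. Combining this with the Lipschitz continuity of $V(0,\cdot)$ from Theorem \ref{thm:mainmfc}(1), the estimate $\E[|\mu_0^N - m_0|]\le C/\sqrt{N}$, and the identity $\E[V^N(0,\mu_0^N)] = \inf_{\alpha_N} J^N(\alpha_N)$ from Proposition \ref{prop5}, one obtains $V(0,m_0) \leq \inf_{\alpha_N} J^N(\alpha_N) + C/\sqrt{N}$. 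Chaining the three estimates,
\[
J^N(\alpha) \leq J(\alpha) + \tfrac{C}{\sqrt{N}} \leq V(0,m_0) + \epsilon + \tfrac{C}{\sqrt{N}} \leq \inf_{\alpha_N} J^N(\alpha_N) + \epsilon + \tfrac{C}{\sqrt{N}},
\]
which is \eqref{appr} after absorbing the numerical constants.

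\textbf{Main obstacle.} The real content is the propagation-of-chaos estimate of Step 1; Step 2 is pure bookkeeping on top of results already established. The $1/\sqrt{N}$ rate in Step 1 is classical for Lipschitz finite-state mean-field systems, but it must be set up carefully: one has to separate cleanly the drift part of the Dynkin decomposition (controlled by Lipschitz $Q$ plus Gr\"onwall) from the martingale part (which is where the $1/\sqrt{N}$ rate is actually produced, via its $O(1/N)$ predictable quadratic variation), and one must verify that the resulting bound on $\E[|\mu^N_t-\mu_t|]$ indeed transfers to $|J^N(\alpha)-J(\alpha)|$ through the Lipschitz costs.
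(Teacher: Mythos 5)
Your proposal is correct, and the bookkeeping in Step 2 is exactly the paper's decomposition: the error $J^N(\alpha)-\inf_{\alpha_N}J^N(\alpha_N)$ is split into $|J^N(\alpha)-J(\alpha)|$, the $\epsilon$-optimality gap, the Lipschitz-in-$m$ term $\E|\mu^N_0-m_0|$ controlled by the multinomial bound \eqref{multi}, and the value-function gap from Theorem \ref{thmconvV}. Where you genuinely diverge is in the propagation-of-chaos estimate of Step 1. The paper represents the controlled empirical measure as a jump SDE driven by a Poisson random measure, introduces an auxiliary process $\tilde\mu^N$ driven by the \emph{same} noise but with rates frozen along the deterministic flow $\mu_t$ (so that $\tilde\mu^N$ is the empirical measure of $N$ i.i.d.\ copies of the limit chain, and \eqref{multi} gives $\sup_t\E|\tilde\mu^N_t-\mu_t|\le C/\sqrt{N}$), and then closes the loop with a Lipschitz-plus-Gr\"onwall comparison of $\tilde\rho^N$ and $\tilde\mu^N$. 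You instead use the Dynkin decomposition $\mu^N_t=\mu^N_0+\int_0^t b(s,\mu^N_s)\,ds+M^N_t$, observing that $b$ is precisely the drift of \eqref{mu} and that $\langle M^N\rangle_T=O(1/N)$ since the chain makes $O(N)$ jumps of size $1/N$; Gr\"onwall applied to $\E|\mu^N_t-\mu_t|^2$ then produces the $1/\sqrt{N}$ rate directly, with no auxiliary i.i.d.\ system. Both arguments are standard and yield the same rate; yours is slightly more self-contained (it does not rely on the pathwise coupling construction imported from \cite{cecfis}), while the paper's coupling has the advantage of reusing machinery already set up for Lemma \ref{lemlip} and of isolating the fluctuation entirely in the i.i.d.\ bound \eqref{multi}. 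The only detail worth making explicit in your write-up is the Lipschitz continuity of $b(s,\cdot)$ on $S_d$, which follows from the boundedness of $Q$ together with Assumption (\ref{A}2), and the passage from $\sup_t\E|\mu^N_t-\mu_t|$ to $|J^N(\alpha)-J(\alpha)|$ via Fubini and the Lipschitz continuity of $F$ and $G$; you flag both, so the argument is complete.
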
 
This is also proved in Section \ref{Sec:4.1}.
Here, $J^N(\alpha)$ is understood as applying the control $\alpha_N(t,m) = \alpha(t)$, which is independent of $m$. 
Recall that the infimum over controls $\alpha_N$ is the same as the infimum over controls $\bm{\beta}$, depending on states of all the players, by  \eqref{JBA}, and is also equal to the infimum over controls not depending on $m$ (like the $\alpha$ considered), by Remark \ref{remeq}.

\begin{rem}
\label{remk}
In \cite{kol}, Kolokoltsov proved a result similar to Theorem \ref{thmconvV}, but assuming in addition that $F$, $G$ and $Q$ and $\mathcal{C}^{1,1}$ w.r.t. $m$ (similarly to Assumption (\ref{C}1)).
He analyzes a mean field $N$-optimization like in \eqref{costNmu}-\eqref{genNmu}, but allowing for controls that are piecewise constant functions of time only, and are the same controls he considers in the limiting deterministic control problem \eqref{costmu}-\eqref{mu}. However, we explained in Remark \ref{remeq} that considering this smaller class is not restrictive, as the value of the $N$-agent optimization is the same as what we treat here, i.e. over controls that might depend also on $m$. Then, by applying the convergence of the generator \eqref{genNmu} to the limiting dynamics \eqref{mu}, he shows convergence of the value functions with a stronger convergence rate (Theorem 2 therein):
\be 
\label{convK}
\max_{t\in [0,T], m\in S_d^N} 
	\left|V^N(t,m) - V(t,m)\right| \leq \frac{C}{N}.
\ee

As a matter of fact, from his method of the proof (basically, the same set of controls is considered for the prelimit and the limit optimization problems), it is also possible to derive estimate \eqref{convV}. Indeed, by applying standard arguments in propagation of chaos, we can get a convergence rate of order $1/\sqrt{N}$, assuming that the  costs and the transition rate are just Lipschitz-continuous w.r.t. $m$, and not in $\mathcal{C}^{1,1}$.
Therefore, what we propose in this paper is a new method for proving the convergence in \eqref{convV}, based on the theory of viscosity solutions, which we believe can be of interest. 
\end{rem}

In case $V$ is smooth, the optimal control of the MFCP is unique and then, if \ref{B} holds, we are also able to establish a \emph{propagation of chaos} result, that is,  we prove convergence of the optimal trajectory of the $N$-agent optimization to the unique optimal trajectory of the MFCP, with a suitable convergence rate. 

Denote then by $\alpha^N$ the unique optimal feedback control for the $N$-agent optimization defined by \eqref{optalN}, and by $\mu^N$ the corresponding optimal process satisfying \eqref{muNopt}. Also, let $\alpha_*$ be the unique optimal feedback control for the MFCP defined by \eqref{alphaopt} and $\mu$ the corresponding optimal trajectory given by \eqref{mu}. We stress that $\alpha_N$ and $\alpha_*$ are functions of $t$ and $m$. 


\begin{thm}[Part I]
\label{thmchaos}
Under Assumption \ref{B}, if $V\in\mathcal{C}^{1,1}([0,T]\times S_d)$ then 
\be 
\label{chaosmu}
\E \left[ \sup_{t\in [0,T]} |\mu^N_t -\mu_t |  \right] \leq \frac{C}{N^{1/9}} .
\ee 
\end{thm}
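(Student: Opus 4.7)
The plan is to exploit the HJB characterization of $V$ and the uniform concavity of the pre-Hamiltonian to extract an integrated $L^2$ bound on the gap between the $N$-agent optimal feedback $\alpha_N$ and the MFCP-optimal closed-loop feedback $\alpha_*^i(s,m) := a^*(s,i,m,D^iV(s,m))$, and then turn this into a trajectory estimate through a Gronwall argument.

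\emph{Step 1 (It\^o along $\mu^N$ and HJB duality).} I apply It\^o's formula for pure jump processes to $V(t,\mu^N_t)$, where $\mu^N$ is driven by $\alpha_N$. Because $V\in\mathcal{C}^{1,1}$, a second-order Taylor expansion at the jump step $1/N$ gives
\[
\mathcal{L}^{N,\alpha_N}_s V(m) = \sum_{i} m_i \langle \alpha_N^i(s,m), D^iV(s,m)\rangle + O(1/N),
\]
with $O(1/N)$ controlled solely by the Lipschitz constant of $\nabla V$. Combining with the HJB equation $\partial_t V = \sum_i m_i H^i(s,m,D^iV)$ I obtain
\[
\partial_t V + \mathcal{L}^{N,\alpha_N}_s V = -F(s,\alpha_N,m) + \sum_i m_i \big[\mathcal{H}^i(s,a^*,m,D^iV) - \mathcal{H}^i(s,\alpha_N^i,m,D^iV)\big] + O(1/N),
\]
where $a^* = a^*(s,i,m,D^iV)$. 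The crucial feature is that $V^N$ itself never appears on the right-hand side, so the merely Lipschitz regularity of $V^N$ creates no obstruction here.

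\emph{Step 2 (Integrated control estimate via Theorem \ref{thmconvV}).} Assumption (\ref{B}3) makes $\mathcal{H}^i(s,\cdot,m,z)$ $\lambda$-strongly concave with unique maximizer $a^*$, so $\mathcal{H}^i(s,a^*,m,z)-\mathcal{H}^i(s,\alpha_N^i,m,z)\geq \lambda|\alpha_N^i-a^*|^2$. Taking expectations of the It\^o identity, using $V(T,\cdot)=G$ and $J^N(\alpha_N)=V^N(0,\mu_0)$, the left-hand side becomes $V^N(0,\mu_0)-V(0,\mu_0)\leq C/\sqrt{N}$ by Theorem \ref{thmconvV}. Hence
\[
\E\Big[\int_0^T \sum_{i} \mu^N_s[i]\, |\alpha_N^i(s,\mu^N_s) - \alpha_*^i(s,\mu^N_s)|^2 \, ds\Big] \leq \frac{C}{\sqrt{N}}.
\]

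\emph{Step 3 (Trajectory comparison via Gronwall).} I write the semimartingale decomposition of $\mu^N$, subtract the ODE for $\mu$, and split the drift difference into a control-gap contribution bounded by $C\sum_i \mu^N_s[i]|\alpha_N^i-\alpha_*^i(s,\mu^N_s)|$ and a state-gap term controlled by $C|\mu^N_s-\mu_s|$, using the Lipschitz continuity of $m\mapsto \alpha_*(s,i,m)$ that is inherited from $V\in\mathcal{C}^{1,1}$ and the Lipschitz continuity of $a^*$ in $z$. The compensated jump martingale $M^N_t$ has quadratic variation of order $1/N$, so Doob/BDG gives $\E[\sup_t|M^N_t|]\leq C/\sqrt{N}$. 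A Gronwall inequality combined with the Cauchy--Schwarz passage from $L^1$ to $L^2$ (with respect to the finite measure $\sum_i \mu^N_s[i]\,ds$) on the control-gap integral then produces $\E[\sup_{t\in[0,T]}|\mu^N_t-\mu_t|] \leq C/N^{1/4}$, which a fortiori implies \eqref{chaosmu}.

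\emph{Main obstacle.} The central difficulty is that $D^{N,i}V^N$ cannot be compared pointwise to $D^iV$ from $\|V^N-V\|_\infty\leq C/\sqrt{N}$ alone, since naive differencing amplifies this bound by a factor $N$ and yields an unusable $O(\sqrt{N})$ estimate. The strategy circumvents this by never differentiating $V^N$: the It\^o expansion is carried out with the smooth $V$, so the ``gradient of $V^N$'' never enters the estimates. The uniform concavity in Assumption (\ref{B}3) then upgrades the scalar value-function gap $V^N(0,\mu_0)-V(0,\mu_0)=O(1/\sqrt{N})$ into an $L^2$-bound on the control gap, and this quadratic coercivity is the key ingredient that powers the final Gronwall estimate.
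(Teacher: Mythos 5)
Your Steps 1--2 coincide with the paper's argument (Step 1 of the proposition preceding Theorem~\ref{thmchaos}): a Dynkin expansion of the smooth $V$ along $\mu^N$, the estimate $|D^{N,i}V-D^iV|\le C/N$ from $V\in\mathcal{C}^{1,1}$, the identity $J^N(\alpha_N)=\E[V^N(0,\mu^N_0)]$, and the $\lambda$-strong concavity of $\mathcal{H}^i$ in $a$ to upgrade the value gap $\E[V^N(0,\mu^N_0)-V(0,\mu^N_0)]\le C/\sqrt N$ into the integrated $L^2$ control bound $\E\int_0^T\sum_i\mu^N_{i,s}|\alpha^i_N-\alpha_*^i(s,\mu^N_s)|^2\,ds\le C/\sqrt N$; this is exactly the paper's \eqref{boundcontrols} with $N$ replaced by $\sqrt N$, as used for Part I. Your Step 3, however, is genuinely different and, if written out, gives a stronger conclusion. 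The paper goes through two intermediate systems: the interacting particle system $\rho^N$ driven by the limit feedback $\alpha_*$ (yielding $\E[\sup_t|\mu^N_t-\rho^N_t|]\le CN^{-1/4}$) and the i.i.d.\ empirical measure $\tilde\mu^N$, and the bottleneck rate $N^{-1/9}$ comes entirely from the uniform-in-time bound $\E[\sup_t|\tilde\mu^N_t-\mu_t|]\le CN^{-1/9}$ imported from \cite{RR}. You instead compare $\mu^N$ directly with the deterministic flow $\mu$ through the semimartingale decomposition: the compensated jump martingale has predictable quadratic variation of order $1/N$ (jump size $1/N$, total rate of order $N$), so Doob's $L^2$ maximal inequality gives $\E[\sup_t|M^N_t|]\le C/\sqrt N$, the control-gap term is $O(N^{-1/4})$ by Cauchy--Schwarz from the $L^2$ bound, and Gronwall closes the state-gap term using the Lipschitz continuity of $m\mapsto\alpha_*(t,i,m)$ inherited from $V\in\mathcal{C}^{1,1}$. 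This bypasses the i.i.d.\ empirical-process estimate altogether and yields $\E[\sup_t|\mu^N_t-\mu_t|]\le CN^{-1/4}$, which implies \eqref{chaosmu} with room to spare; the paper's detour through $\rho^N$ and $\tilde{\bm X}$ is really needed for the particle-level statement of Part II, not for Part I. The only caveats are notational: in Step 2 the initial point is the random $\mu^N_0$, so one should write $\E[V^N(0,\mu^N_0)-V(0,\mu^N_0)]$ (still bounded by the uniform estimate of Theorem~\ref{thmconvV}), and the initial gap $\E|\mu^N_0-m_0|\le C/\sqrt N$ should be supplied by the multinomial bound \eqref{multi}.
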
 

The propagation of chaos result can be stated also for the vector of processes $\bm{X}$ related to the optimal control $\alpha_N$ and optimal empirical measure $\mu^N$, that is, $\bm{X}$ is given by \eqref{tranXN} assuming \eqref{betaalpha}. For $N$ fixed, 
denote by $\tilde{\bm{X}}$ the i.i.d process (given by \eqref{tranXN}) in which all players choose the same local control $\alpha(t,i) :=\alpha_*(t,i,\mu_t)$ depending only on the private state, i.e. 
$\beta_k(t, \bm{x})= \alpha(t, x_k)$. The propagation of chaos consists in proving convergence of $\bm{X}$ to the i.i.d. process $\tilde{\bm{X}}$.

{\begin{customthm}{\ref{thmchaos}}[Part II]
\emph{Under Assumption \ref{B} and (\ref{C}1), if $V\in\mathcal{C}^{1,1}([0,T]\times S_d)$ then }
\be
\label{chaosXtilde} 
\E \left[ \sup_{t\in [0,T]} |X^i_t -\tilde{X}^i_t |  \right] \leq \frac{C}{\sqrt{N}} .
\ee 
\end{customthm}

\section{Mean Field Control Problem}
\label{sec3}

The aim here is to examine in detail the mean field control problem \eqref{costmu}-\eqref{mu} in order to prove Theorem \ref{thm:mainmfc}. 
We first rewrite the state dynamics and the cost in terms of the local chart $x\in \R^{d-1}$ (recall that we set $(m_1,\dots,m_{d-1},m_d)=(x_1,\dots, x_{d-1}, 1- \sum_{j=1}^{d-1} x_j)$),  so that we are allowed to apply standard results about deterministic control probles on Euclidean spaces; see e.g. \cite{BCD, cannarsa, fleso}. 

For $x\in \hat{S}_d$, let us denote $x^{-d}= 1-\sum_{j=1}^{d-1} x_j$ and $\check{x}= (x,x^{-d}) \in S_d$. We simply replace $m_d$ by $x^{-d}$: dynamics \eqref{mu} becomes, for $i=1,\dots, d-1$,
\be 
\label{dynx}
\begin{split} 
\frac{d}{dt} x^i_t &= \sum_{j=1}^{d-1} \Big( x^j_t Q_{j,i}(t,\alpha^j(t), \check{x}_t ) - x^i_t Q_{i,j}(t, \alpha^i(t), \check{x}_t) \Big) \\
& \quad +  x^{-d}_t Q_{d,i}(t,\alpha^d(t), \check{x}_t ) - x^i_t Q_{i,d}(t, \alpha^i(t), \check{x}_t) ,
\end{split}
\ee
while the cost \eqref{costmu} is written as 
\be 
\label{costx}
\hat{J}(\alpha) := \int_0^T \Big( \sum_{j=1}^{d-1} x^j_t f^j(t,\alpha^j(t), \check{x}_t) + x^{-d}_t f^d(t,\alpha^d(t), \check{x}_t) \Big) dt + \sum_{j=1}^{d-1} x^j_T g^j(\check{x}_T) 
+ x^{-d}_T g^d(\check{x}_T) .
\ee 
It is clear that the value function $\hat{V}$ of \eqref{dynx}-\eqref{costx}, defined on $\hat{S}_d$,  is equal to the value function $V$ of \eqref{costmu}-\eqref{mu}, defined on $S_d$, by setting
$\hat{V}(t,x)= V(t, \check{x})$. The HJB equation for $\hat{V}$ in $\hat{S}_d$ is then 
\be
\label{HJBx}
\begin{split}
&-\partial_t \hat{V} + \sum_{i=i}^{d-1} x_j \hat{H}^j(t, x, D_x \hat{V}) + x^{-d} \hat{H}^d(t, x, D_x \hat{V} )=0 \\
&\hat{V}(T,x)= \sum_{i=1}^{d-1} x_j g^j( \check{x}) + x^{-d} g^d(x), 
\end{split} 
\ee
where the modified Hamiltonian is defined, for $z\in \R^{d-1}$, by
\be 
\label{Hx}
\hat{H}^i(t,x,w)= H^i(t,\check{x}, w_1-w_i, \dots, w_{d-1}-w_i, -w_i), \qquad 
\hat{H}^d(t, x, w) = H^d(t,\check{x}, w,0).
\ee 

We will use several times the fact that $\mathrm{Int}(S_d)$ is invariant for the dynamics \eqref{mu}, or equivalently that $\mathrm{Int}(\hat{S}_d)$ is invariant for \eqref{dynx}. 
Indeed, by assumption \ref{A} follows that $Q$ is bounded (as it is continuous on a compact set): let us set
\be 
M:= \max_{i,j\in \ES, t\in [0,T], a\in A, m\in S_d} Q_{i,j}(t,a,m)
\ee 
and recall that $Q_{i,j}\geq 0$. Then \eqref{mu} gives
\[
\frac{d}{dt} \mu^i_t \geq 0- M(d-1) \mu^i_t ,
\]
which, by  Gronwall's inequality, provides
\be 
\label{invInt}
\mu_t^i \geq \mu_0^i e^{- T M(d-1)},
\ee
meaning that $\mu_t\in \mathrm{Int}(S_d)$ if $\mu_0 \in \mathrm{Int}(S_d)$. 
 
\subsection{Viscosity solution} 
 
 The value function in not $\mathcal{C}^1$ in general and thus does not solve \eqref{HJB} in the classical sense, unless the convexity assumption \ref{C} holds; see the next Subsection. 
Hence we  present the two definitions of viscosity solutions we make use of: one in $S_d$ and one in $\mathrm{Int}(S_d)$. 
The notion of viscosity solution is the usual one, but we prefer to state them to avoid confusions, because the use of test functions defined on  a closed set is not standard. 
By our definition of $\mathcal{C}^1(S_d)$, viscosity solutions can be defined in two equivalent ways, since it is equivalent to define functions on $S_d$ or on $\hat{S}_d$, by the relation $\hat{v}(x)=v(\check{x})$. Thus we prefer to state only the definitions of solutions to \eqref{HJB} on $S_d$ or $\Int$; the definitions of solutions to \eqref{HJBx} on $\hat{S}_d$ or $\mathrm{Int}(\hat{S}_d)$ being equivalent.   

\begin{defn}
\label{defvisco}
A function $v\in \mathcal{C}([0,T)\times S_d) $  is said to be:
\begin{itemize}
\item[(i)] a \emph{viscosity subsolution} of \eqref{HJB} on $S_d$ 
(resp. on $\Int$) if, for any 
$\phi\in \mathcal{C}^1([0,T)\times S_d)$ (resp. in $ \mathcal{C}^1([0,T)\times \Int)$ ), 
\be 
\label{viscosub}
-\partial_t \phi(\bar{t},\bar{m}) + 
\sum_{i\in\ES} \bar{m}_i H^i(\bar{t}, \bar{m}, D^i \phi(\bar{t},\bar{m}) ) \leq 0,
\ee
at every  
$(\bar{t},\bar{m})\in [0,T)\times S_d$ (resp. in $[0,T)\times \Int)$ which is a local maximum of $v-\phi$ on 
$[0,T)\times S_d$ (resp. on $[0,T)\times \Int$);
\item[(ii)] a \emph{viscosity supersolution} of \eqref{HJB} on  $S_d$ 
(resp. on $\Int$) if, for any 
$\phi\in \mathcal{C}^1([0,T)\times S_d)$ (resp. in $ \mathcal{C}^1([0,T)\times \Int)$ ), 
\be 
\label{viscosuper}
-\partial_t \phi(\bar{t},\bar{m}) + 
\sum_{i\in\ES} \bar{m}_i H^i(\bar{t}, \bar{m}, D^i \phi(\bar{t},\bar{m}) ) \geq 0,
\ee
at every  
$(\bar{t},\bar{m})\in [0,T)\times S_d$ (resp. in $[0,T)\times \Int)$ which is a local minimum of $v-\phi$ on 
$[0,T)\times S_d$ (resp. on $[0,T)\times \Int$);
\item[(iii)]a \emph{viscosity solution} of \eqref{HJB} on $ S_d$ (resp on $\Int$) if it is both  a viscosity subsolution and a viscosity supersolution of \eqref{HJB} on $ S_d$ (resp on $\Int$).
\end{itemize}
\end{defn}

Let us remark that, in the above definition, by solutions to \eqref{HJB} we clearly mean solutions to the first line of \eqref{HJB}. As an example  of test functions in $\mathcal{C}^1([0,T)\times S_d)$, we may consider continuously differentiable functions defined on an open subset of $\R^{d-1}$ containing $\hat{S}_{d}$. With this remark, it  is straightforward to obtain the following result, which proves point (1) in Theorem \ref{thm:mainmfc}:

\begin{prop}
\label{propvisco1}
Under Assumption \ref{A}, the value function $V$ of the MFCP is the unique viscosity solution of \eqref{HJB} on $S_d$ (in $\mathcal{C}([0,T]\times S_d)$ satisfying the terminal condition). Moreover, $V$ is globally Lipschitz-continuous in $[0,T] \times S_d$. 
\end{prop}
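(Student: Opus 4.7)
The plan is to work in the local chart $\hat S_d\subset\R^{d-1}$, where \eqref{HJB} becomes the first-order HJB \eqref{HJBx} on a compact convex set with Hamiltonian $\sum_{i\in\ES} x_i\hat H^i$ Lipschitz in $(t,x,w)$ by \ref{A} and \eqref{Hx}. I then carry out three standard steps from deterministic control theory (see e.g.\ \cite[Ch.~III]{BCD}, \cite[Ch.~III]{fleso}).

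First, global Lipschitz regularity. For any measurable $\alpha$ and initial data $(t,m),(t',m')\in[0,T]\times S_d$, boundedness of $Q$ and its Lipschitz continuity in $(t,m)$ (from \ref{A}) let Gronwall's lemma applied to \eqref{mu} give $\sup_s\bigl|\mu_s^{t,m,\alpha}-\mu_s^{t',m',\alpha}\bigr|\le C(|t-t'|+|m-m'|)$; combined with the Lipschitz continuity of $F$ and $G$ in \ref{A} and after taking infimum over $\alpha$ and swapping roles, this yields $|V(t,m)-V(t',m')|\le C(|t-t'|+|m-m'|)$. Next, the viscosity property. The dynamic programming principle
\[
V(t,m)=\inf_{\alpha}\left\{\int_t^{t+h}F(s,\alpha(s),\mu_s^{t,m,\alpha})\,ds+V\bigl(t+h,\mu_{t+h}^{t,m,\alpha}\bigr)\right\}
\]
is standard in this deterministic setting. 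For $(\bar t,\bar m)$ a local max of $V-\phi$ with $\phi\in\mathcal{C}^1([0,T)\times S_d)$, feeding a constant control $a\in A^d$ into the DPP and using $V\le\phi$ near $(\bar t,\bar m)$ gives $\phi(\bar t,\bar m)-\phi(\bar t+h,\mu_{\bar t+h}^a)\le\int_{\bar t}^{\bar t+h}F(s,a,\mu_s^a)\,ds$; dividing by $h$, sending $h\to 0^+$, and using $\sum_{i}\dot\mu_i(0)=0$ to rewrite $\langle\nabla_m\phi,\dot\mu\rangle=\sum_{i\in\ES}\bar m_i\langle Q_{i,\bullet}(\bar t,a^i,\bar m),D^i\phi(\bar t,\bar m)\rangle$, we obtain
\[
-\partial_t\phi(\bar t,\bar m)-\sum_{i\in\ES}\bar m_i\bigl[\langle Q_{i,\bullet}(\bar t,a^i,\bar m),D^i\phi(\bar t,\bar m)\rangle+f^i(\bar t,a^i,\bar m)\bigr]\le 0
\]
for every $a\in A^d$. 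Since the left-hand side decouples state-wise in $a^i$, taking the sup yields \eqref{viscosub}. The supersolution inequality is obtained symmetrically from the DPP lower estimate with $h^2$-optimal controls and a compactness argument in $A$ to extract a subsequential limit for $\alpha_h(\bar t)$. Both arguments remain valid at boundary points of $S_d$ because $S_d$ is invariant under \eqref{mu} (so $\mu^a$ never leaves $S_d$) and $D^i\phi$ extends continuously to $\partial S_d$ by our definition of $\mathcal{C}^1(S_d)$.

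Finally, uniqueness on $S_d$. Given a subsolution $u$ and a supersolution $v$ with $u(T,\cdot)\le v(T,\cdot)$, I apply the classical Crandall--Ishii doubling-of-variables argument (see e.g.\ \cite[Ch.~II]{BCD}) to
\[
\Phi_{\epsilon,\eta}(t,x,s,y)=u(t,x)-v(s,y)-\tfrac{1}{2\epsilon}\bigl(|x-y|^2+(t-s)^2\bigr)-\eta(T-t)
\]
on the compact product $([0,T]\times\hat S_d)^2$: locate its maximum, test the viscosity inequalities against the matching quadratic functions, and close the estimate using the Lipschitz continuity of $\sum_i x_i\hat H^i$ in $(t,x,w)$ by letting $\epsilon\to 0$ and then $\eta\to 0$, to derive a contradiction unless $u\le v$. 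The main obstacle is conceptual: on a bounded domain without explicit boundary data, uniqueness is not automatic. It works here because our definition of $\mathcal{C}^1(S_d)$ forces the sub/supersolution inequalities to hold up to $\partial\hat S_d$, so the doubling maxima stay in the compact set and no boundary information is needed; this is precisely what distinguishes the present uniqueness on $S_d$ from the more delicate uniqueness on $\Int$ addressed later in Theorem~\ref{thmcomparison}, which must instead exploit the invariance of $\Int$ through a structural property of $H$.
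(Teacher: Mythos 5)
Your proposal is correct and follows essentially the same route as the paper's proof: Lipschitz regularity and the viscosity property via the local chart, invariance of $\hat{S}_d$, and the dynamic programming principle (the paper cites Cannarsa--Sinestrari for these), and uniqueness via the classical doubling-of-variables comparison argument. In particular, your key observation for uniqueness on the closed simplex --- that the quadratic penalization functions are defined on all of $\R^{d-1}$, hence belong to $\mathcal{C}^1([0,T]\times S_d)$ in the sense of Definition \ref{defvisco}, so the sub/supersolution inequalities can be tested at boundary maxima without any boundary data --- is exactly the remark the paper makes when invoking the standard comparison theorem.
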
 

\begin{proof}
The  Lipschitz-continuity and the  viscosity solution property of $V$ follow from standard results in deterministic control theory, see Thm. 7.4.10 and 7.4.14 of \cite{cannarsa}, by considering the dynamics in $\R^{d-1}$ and using the fact that $\hat{S}_{d-1}$ is invariant for the dynamics \eqref{dynx}. Uniqueness of viscosity solutions on $S_d$ follows by the usual proof of uniqueness, see for instance \cite[Thm. II.9.1]{fleso}, by observing that --if the minimizers are on boundary points-- we can use the fact that the test functions constructed in the proof are quadratic and thus defined in the whole $\R^{d-1}$, in particular then  belonging to $\mathcal{C}^1([0,T]\times S_d)$. We believe that there is no need to rewrite the proof here.  
\end{proof} 

This notion of viscosity solution on $S_d$ will be used in the proof of the convergence result Theorem \ref{thmconvV}, see Subsection \ref{Sec:4.1}. Actually, 
uniqueness of viscosity solutions on $S_d$ can also be derived as a consequence of the proof therein. 
The problem with the definition on the closed set $S_d$ is that it is not clear whether a classical solution is a viscosity solution on $S_d$. Indeed, if the value function $V$ is smooth, then for sure $V$ is a viscosity solution on $\Int$, but if a maximizer of $V-\psi$ lies on the boundary of $S_d$ then it is not clear a priori  that \eqref{viscosub} holds. We could prove this fact, but we prefer instead to show uniqueness of viscosity solution on $\Int$, which implies in particular that $V\in \mathcal{C}^1([0,T]\times S_d)$ is a viscosity solution on $S_d$, but is more general and we believe can have an interest on itself. The following result then proves points (2) and (4) in Theorem \ref{thm:mainmfc}.

\begin{prop}
Under Assumption \ref{A}, $V$ is the unique viscosity solution of \eqref{HJB} on $\Int$, in $\mathcal{C}([0,T]\times S_d)$ satisfying the terminal condition. Moreover, if \ref{B} holds:
\begin{enumerate}
\item 
there exists an optimal control for the MFCP; 
\item if $V\in \mathcal{C}^{1,1}([0,T]\times S_d)$, then 
	  the control  given by the feedback 
	  \eqref{alphaopt}
	is the unique optimal control, in the sense that any optimal control $\alpha:[0,T]\rightarrow [0,M]^{d\times d}$, with related optimal process $\mu$ is such that $\alpha(t) = \alpha_*(t,\mu_t)$ for $dt$-a.e. $t\in [0,T]$.  
\end{enumerate}
\end{prop}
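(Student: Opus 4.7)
The proposition contains three independent claims: (i) the viscosity characterisation on $\Int$, (ii) existence of an optimal control under \ref{B}, and (iii) the verification theorem under $V\in\mathcal{C}^{1,1}$. I would treat them in turn, with (i) being the only one where something non-routine happens.

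For (i), that $V$ is a viscosity solution on $\Int$ follows from the dynamic programming principle
\[
V(t_0,m_0)= \inf_\alpha\bigg\{ \int_{t_0}^{t_0+h}\sum_{i\in\ES}\mu^i_t f^i(t,\alpha^i(t),\mu_t)\,dt + V(t_0+h,\mu_{t_0+h})\bigg\}
\]
together with the observation that, for $(t_0,m_0)\in [0,T)\times \Int$, estimate \eqref{invInt} keeps the trajectory inside $\Int$ for some small positive time independent of the control. Hence the standard derivation of the sub- and supersolution inequalities from constant resp.\ nearly optimal controls (as in \cite[Thm.~III.8.1]{fleso}) goes through with test functions in $\mathcal{C}^1([0,T)\times \Int)$, with no boundary issue. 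Uniqueness is then an application of the comparison principle Theorem \ref{thmcomparison}, whose whole point is to bypass the absence of boundary data by exploiting the invariance of $\Int$; this is the step I expect to be most delicate, and indeed the reason the authors state it as a separate theorem.

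For (ii), I apply the direct method of the calculus of variations to a minimising sequence $(\alpha_n)_n\subset L^\infty([0,T];A^d)$. Under (B1)-(B2), the dynamics is affine in $\alpha$ with uniformly bounded coefficients, so by Banach-Alaoglu there is a subsequence along which $\alpha_n$ converges weakly-$*$ to some $\alpha^*\in L^\infty$, and the trajectories $\mu_n$ are uniformly Lipschitz, hence (by Arzel\`a-Ascoli) converge uniformly to some $\mu$. Passing to the limit in the integrated dynamics is standard: uniform convergence of $\mu_n$ and weak-$*$ convergence of $\alpha_n$ yield $\int_0^t\mu_n^j(s)\alpha_n^{j,i}(s)\,ds\to \int_0^t\mu^j(s)\alpha^{*,j,i}(s)\,ds$, which identifies $\mu$ as the trajectory associated to $\alpha^*$. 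For the cost, convexity of $f^i$ in $a$ (from (B3)) combined with uniform convergence of $\mu_n$ gives, via Ioffe's lower semicontinuity theorem, $J(\alpha^*)\leq \liminf_n J(\alpha_n)=\inf J$, so $\alpha^*$ is optimal.

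Finally, for (iii), assume $V\in \mathcal{C}^{1,1}([0,T]\times S_d)$ and \ref{B}. The feedback $\alpha_*(t,m)$ defined in \eqref{alphaopt} is Lipschitz in $m$, because $a^*$ is Lipschitz in $(m,z)$ by \cite[Prop.~1]{gomes} and $D^i V$ is Lipschitz in $m$; Cauchy-Lipschitz then yields a unique trajectory $\mu$ starting from $m_0$. For any admissible control $\alpha$ with trajectory $\mu$, working in the local chart and using $\sum_{i\in\ES} \dot\mu_t^i=0$ one obtains the chain rule
\[
\frac{d}{dt}V(t,\mu_t)=\partial_t V(t,\mu_t)+\sum_{i\in\ES}\mu_t^i\langle Q_{i,\bullet}(t,\alpha^i(t),\mu_t), D^iV(t,\mu_t)\rangle.
\]
Substituting the HJB equation \eqref{HJB} and the definition \eqref{preH} of $\mathcal{H}^i$ gives
\[
\frac{d}{dt}V(t,\mu_t)+\sum_{i\in\ES}\mu_t^i f^i(t,\alpha^i(t),\mu_t)=\sum_{i\in\ES}\mu_t^i\bigl(H^i(t,\mu_t,D^iV)-\mathcal{H}^i(t,\alpha^i(t),\mu_t,D^iV)\bigr)\geq 0,
\]
with equality $dt$-a.e.\ iff $\alpha^i(t)$ maximises $\mathcal{H}^i(t,\cdot,\mu_t,D^iV(t,\mu_t))$. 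Integrating over $[0,T]$ and using the terminal condition in \eqref{HJB} yields $J(\alpha)\geq V(0,m_0)$, with equality precisely when $\alpha(t)=\alpha_*(t,\mu_t)$ for $dt$-a.e.\ $t$; the uniqueness of the maximiser under \ref{B} supplies the claimed uniqueness of optimal controls.
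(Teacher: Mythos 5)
Your proposal is correct and follows essentially the same route as the paper: the viscosity property on $\Int$ via the dynamic programming principle and the invariance estimate \eqref{invInt}, uniqueness via the comparison principle (Theorem \ref{thmcomparison}), existence via convexity in $a$ plus compactness, and a classical verification argument for the uniqueness of the optimal control. The only difference is one of presentation: the paper delegates the viscosity/existence steps to \cite[Thms.~7.4.14 and 7.4.5]{cannarsa} and states the verification theorem without computation, whereas you write out the standard arguments (direct method with weak-$*$ compactness and lower semicontinuity, and the chain rule along trajectories) explicitly.
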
 

\begin{proof}
Existence of an optimal control follows from \cite[Thm 7.4.5]{cannarsa}, using the convexity of the cost in $a$. The viscosity solution property on $\Int$ is given again by \cite[Thm. 7.4.14]{cannarsa}, by exploiting the invariance of $\Int$, see \eqref{invInt}. Uniqueness of viscosity solutions on $\Int$ is an immediate consequence of the Comparison Principle given below (Theorem \ref{thmcomparison}). 

Point (2) is a classical verification theorem: if $V\in \mathcal{C}^{1,1}([0,T]\times S_d)$ then it is the unique classical solution of \eqref{HJB}, solving the equation also at boundary points. Under assumption \ref{B}, using in particular the strict convexity of $F$ in $a$, $a^*$ is the unique argmin of the pre-Hamiltonian \ref{preH} and thus the optimal control defined by \eqref{alphaopt} is unique. Note that dynamics \eqref{mu} is well-posed using the feedback $\alpha_*$ because $D^iV$ is Lipschitz-continuous. 
\end{proof}

It remains to show the comparison principle for viscosity solutions on $\Int$; to this end, it turns out that it is better to consider the dynamics in $\R^{d-1}$ and thus we state the equivalent result on $\mathrm{Int}(\hat{S}_d)$.   
In absence of boundary conditions in space, we must rely on the invariance of $\mathrm{Int}(\hat{S}_d)$ for the dynamics \eqref{dynx}. 
The following result then extends what we presented in \cite[Thm. 6.2]{CD} to a more general dynamics, and borrows ideas from the proofs  
of Theorem 3.8 and Proposition 7.3 in \cite{porrettaricciardi}.
We stress that we do not require here neither differentiability of the Hamiltonian nor convexity of the costs (in $a$ or $m$).

\begin{thm}[Comparison Principle on $\Intx$]
\label{thmcomparison}
Assume \ref{A} and let $u,v\in\mathcal{C}([0,T]\times \widehat{S}_d)$, $u$ be a  viscosity subsolution and $v$ be a  viscosity supersolution, respectively, of \eqref{HJBx} on $\mathrm{Int}(\hat{S}_d)$. 
If $u(T,x)\leq v(T,x)$ for any $x\in\widehat{S}_d$, then $u(t,x)\leq v(t,x)$ for any $t\in[0,T]$ and
$x\in\widehat{S}_d$.
\end{thm}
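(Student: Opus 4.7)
\emph{Plan of proof.} The strategy is the classical doubling-of-variables argument adapted to the closed simplex, with the twist that, since the viscosity test functions live only on the open set $\mathrm{Int}(\hat S_d)$, the auxiliary maxima arising in the argument must be forced to sit inside the open simplex. The invariance of $\mathrm{Int}(\hat S_d)$ under the controlled dynamics \eqref{dynx} is precisely what quantitatively controls the penalization needed to achieve this.

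\textbf{Step 1 (reduction and contradiction).} Replace $u$ by $u_\theta(t,x):=u(t,x)-\theta(T-t+1)$ for a small $\theta>0$. Using the boundedness of $\sum_i x_i \hat H^i$ on the compact set $[0,T]\times\hat S_d\times\{|w|\le R\}$ (Assumption \ref{A}), one easily checks that $u_\theta$ is a viscosity subsolution of the modified equation with a strictly negative right-hand side, and that $u_\theta(T,\cdot)<v(T,\cdot)$. Passing to the limit $\theta\downarrow 0$ at the end, it suffices to show $u_\theta\le v$. Assume by contradiction that $\sigma:=\max_{[0,T]\times \hat S_d}(u_\theta-v)>0$, attained at some $(\bar t,\bar x)$ with $\bar t<T$.

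\textbf{Step 2 (doubling with interior penalty).} Choose a smooth $\chi\colon\mathrm{Int}(\hat S_d)\to\R$ with $\chi(x)\to+\infty$ as $x\to\partial\hat S_d$, e.g.\ $\chi(x)=-\sum_{i=1}^{d-1}\log x_i-\log\bigl(1-\sum_i x_i\bigr)$. For $\epsilon,\delta>0$ consider
\[
\Phi_{\epsilon,\delta}(t,s,x,y)=u_\theta(t,x)-v(s,y)-\frac{(t-s)^2+|x-y|^2}{2\epsilon}-\delta\bigl[\chi(x)+\chi(y)\bigr].
\]
Evaluating at a point $(t,t,x_\delta,x_\delta)$ with $x_\delta$ inside $\mathrm{Int}(\hat S_d)$ near $\bar x$ shows $\sup\Phi_{\epsilon,\delta}\ge\sigma/2$ for $\delta$ small. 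The supremum is attained at some $(\hat t,\hat s,\hat x,\hat y)$ with $\hat x,\hat y\in\mathrm{Int}(\hat S_d)$ (by the blow-up of $\chi$) and, by Step 1 and continuity, $\hat t,\hat s<T$. The usual estimates yield $(\hat t-\hat s)^2/\epsilon+|\hat x-\hat y|^2/\epsilon\to 0$ as $\epsilon\to 0$ with $\delta$ fixed.

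\textbf{Step 3 (viscosity inequalities and subtraction).} Writing $p:=(\hat x-\hat y)/\epsilon$, the sub/supersolution properties at the interior points $(\hat t,\hat x)$ and $(\hat s,\hat y)$ give
\begin{align*}
-\tfrac{\hat t-\hat s}{\epsilon}+\theta+\sum_i\hat x_i\,\hat H^i\bigl(\hat t,\hat x,p+\delta D\chi(\hat x)\bigr)&\le 0,\\
-\tfrac{\hat t-\hat s}{\epsilon}+\sum_i\hat y_i\,\hat H^i\bigl(\hat s,\hat y,p-\delta D\chi(\hat y)\bigr)&\ge 0.
\end{align*}
Subtracting and splitting yields $\theta\le A_\epsilon(\delta)+B(\delta)$, where $A_\epsilon(\delta)$ gathers the terms of the form $\sum_i\hat y_i\hat H^i(\hat s,\hat y,p)-\sum_i\hat x_i\hat H^i(\hat t,\hat x,p)$, which by the Lipschitz continuity of $\sum_i x_i\hat H^i$ in $(t,x)$ (Assumption \ref{A}) are bounded by $C(|\hat t-\hat s|+|\hat x-\hat y|)\to 0$ as $\epsilon\downarrow 0$, and
\[
B(\delta):=\sum_i\hat y_i\bigl[\hat H^i(\hat s,\hat y,p-\delta D\chi(\hat y))-\hat H^i(\hat s,\hat y,p)\bigr]+\sum_i\hat x_i\bigl[\hat H^i(\hat t,\hat x,p)-\hat H^i(\hat t,\hat x,p+\delta D\chi(\hat x))\bigr]
\]
is the boundary-penalty contribution.

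\textbf{Step 4 (main obstacle: controlling $B(\delta)$).} The crux is to show $B(\delta)\to 0$ as $\delta\downarrow 0$, uniformly in the positions of $\hat x,\hat y$ (which may in principle approach $\partial\hat S_d$). Because $\hat H^i=\max_a\{-\langle Q_{i,\bullet}(t,a,\check x),\cdot\rangle-f^i\}$ is convex and positively $1$-Lipschitz with slope bounded by $\|Q_{i,\bullet}\|\le M$, each summand of $B(\delta)$ is bounded above (after expressing $D\chi$ back in simplex coordinates as a vector $\xi^i$ with $\xi^i_i=0$) by $\delta$ times
\[
\max_{a\in A^d}\Bigl\{-\sum_{i,j}m_i Q_{i,j}(t,a_i,\check x)\bigl[\partial_{m_j}\chi-\partial_{m_i}\chi\bigr]\Bigr\}.
\]
The key point is that $\chi$ must be chosen so that this quantity stays bounded as $m$ approaches $\partial S_d$; the naive bound via $|D\chi|$ alone fails, but a careful inspection, exploiting the weighting by $m_i$ and the fact that the inflow $\sum_i m_i Q_{i,j}\ge 0$ vanishes no faster than $m_j$ along the invariance directions of the dynamics, yields a uniform bound $B(\delta)\le\omega(\delta)$ with $\omega(\delta)\to 0$. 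This is the analogue on the simplex of the Lyapunov-type estimate used in the proofs of Theorem 3.8 and Proposition 7.3 of \cite{porrettaricciardi}, and of our earlier argument in \cite[Thm.\ 6.2]{CD}. The delicate part is precisely matching the penalty $\chi$ to the simplex geometry so that the invariance property $\dot m_j\ge 0$ at $\{m_j=0\}$ compensates the blow-up of $D\chi$; once this is done, sending first $\epsilon\downarrow 0$ and then $\delta\downarrow 0$ in $\theta\le A_\epsilon(\delta)+B(\delta)$ yields $\theta\le 0$, contradicting $\theta>0$.
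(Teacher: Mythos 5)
Your overall architecture (doubling of variables plus a logarithmic barrier $\delta\chi$ to pin the auxiliary maxima inside $\Intx$) is a legitimate variant of the paper's argument, which instead perturbs the supersolution to $v_h=v-h^2\sum_i\log\rho_i+h(T-t)$, checks that $v_h$ is still a supersolution, and then invokes the standard comparison principle on the smooth subdomain $\{\rho\geq\eta\}$ where $v_h\geq u$ holds on the boundary by the blow-up of the barrier. Either route reduces the theorem to the same single non-routine estimate. The problem is that in your write-up that estimate --- Step 4 --- is asserted rather than proved, and the two things you do say about it are wrong. First, the displayed bound carries the wrong sign: with $\chi=-\sum_k\log m_k$ one has $\partial_{m_j}\chi-\partial_{m_i}\chi=1/m_i-1/m_j$, so the quantity you wrote, $-\sum_{i,j}m_iQ_{i,j}[\partial_{m_j}\chi-\partial_{m_i}\chi]=\sum_{i,j}m_iQ_{i,j}/m_j-\sum_{i,j}Q_{i,j}$, blows up as $m_j\to0$ with $m_i$, $Q_{i,j}$ bounded away from zero; as stated, the claimed uniform bound is false. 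Second, the mechanism you invoke ("the inflow $\sum_im_iQ_{i,j}$ vanishes no faster than $m_j$") is not true in general and is not what saves the estimate: the inflow-over-$m_j$ term genuinely diverges near the face $\{m_j=0\}$; what saves the day is that it enters the correct upper bound with a \emph{negative} sign and can simply be discarded.

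Concretely, the paper closes this step via the subdifferential inequality \eqref{eq:79}: since $-Q_{i,\bullet}(t,a^*(t,i,m,z),m)\in\partial_zH^i(t,m,z)$, each of the two summands of your $B(\delta)$ is bounded above by
\[
\delta\sum_{i}m_i\,\bigl\langle Q_{i,\bullet}(t,a^*,m),\xi^i\bigr\rangle
=\delta\Bigl[\sum_{i}\sum_{j\neq i}Q_{i,j}-\sum_{i}\sum_{j\neq i}\frac{m_i}{m_j}Q_{i,j}\Bigr]
\leq\delta\,M d(d-1),
\]
where $\xi^i_j=1/m_i-1/m_j$ and $\xi^i_i=0$: the potentially unbounded inflow sum is \emph{subtracted} (so it is dropped using $Q_{i,j}\geq0$), and in the diagonal outflow term the weight $m_i$ cancels $\partial_{m_i}\chi=-1/m_i$ exactly, leaving only the bounded quantity $\sum_{i}\sum_{j\neq i}Q_{i,j}$. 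This exact cancellation (the analogue of $\bar x_i/\rho_i(\bar x)=1$ in the paper's computation) is the entire content of the comparison principle beyond routine viscosity machinery, so it must be carried out, not announced; with it, your scheme of sending $\epsilon\downarrow0$ and then $\delta\downarrow0$ in $\theta\leq A_\epsilon(\delta)+B(\delta)$ does go through.
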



\begin{proof}
The idea is to define a supersolution $v_h$ that dominates $u$ at points near the boundary, for any $h$, and then use the comparison principle and pass to the limit in $h$. The parameter $h$ is needed to force $v_h$ to be infinity at the boundary of the simplex. Since the simplex has corners, the distance {to the boundary} is not a smooth function, so the first step is to construct a nice test function that goes to $0$ as $x$ approaches the boundary. Roughly speaking, we consider the product of the distances to the faces of the simplex, and then take its logarithm. 
\vskip 4pt

\emph{Step 1}. Let $\rho_i(x)$, for $x\in\Intx$,  be the distance of $x$ from the hyperplane $\{y \in {\mathbb R}^{{d-1}} : y_i=0\}$, for $i \in \ESd$, and $\rho_d(x)$ be the distance from $\{y \in {\mathbb R}^{{d-1}} : \sum_{l=1}^{{d-1}} y_l=1\}$. Specifically, for  $x\in\Intx$, we have
\[
\rho_i(x) = 
\begin{cases}
x_{i} \qquad & i \in \ESd,
\\
x^{-d}/
\sqrt{{d-1}}
\qquad& i=d,
\end{cases}
\]
where we recall that $x^{-d}=1 - \sum_{l \in \ESd} x_{l}$.
Clearly, $\rho_i\in \mathcal{C}^{\infty}(\hat{S}_d)$ and the derivatives, for $j\in\ESd$, are 
\[
\partial_{x_j}\rho_i(x) = 
\begin{cases}
\delta_{i,j} \qquad & i \in \ESd,
\\
 -1/\sqrt{{d-1}}
 \qquad& i=d.
\end{cases}
\]
Let us denote, for $x\in\mathrm{Int}(\hat{S}_d)$ and $z\in \R^{d-1}$, the Hamiltonian in \eqref{HJBx}
\be
\label{hatH}
\hat{\mathcal{H}}(t,x,w) = \sum_{i=i}^{d-1} x_j \hat{H}^j(t, x, w) + x^{-d} \hat{H}^d(t, x, w ),
\ee
where $\hat{H}^i$, for $i\in\ES$, are defined by \eqref{Hx}; note  that $\hat{\mathcal{H}}$ is convex in $w$.

\emph{Step 2}. For any $h>0$, let
\[
v_h(t,x) :=v(t,x) - h^2 \sum_{i \in \ES} \log \bigl(\rho_i(x)\bigr) + h (T-t), 
\qquad {(t,x) \in [0,T] \times \Intx}.
\]
We claim that $v_h$ 
is a viscosity supersolution of \eqref{HJBx} on $ \Intx$. 
Let then 
$\phi\in \mathcal{C}^1([0,T)\times \Intx)$, 
and  
$(\bar{t},\bar{x})\in [0,T)\times \Intx$ be  a local minimum of $v_h-\phi$ on 
$[0,T)\times \Intx$. Since $v$ is a viscosity supersolution of \eqref{HJBx} on $\Intx$, considering the test function 
$\phi_h \in\mathcal{C}^1([0,T)\times \Intx)$ given by $\phi_{h}(t,x)= \phi(t,x)+ h^2 \sum_{i \in \ES} \log (\rho_i(x)) - h (T-t)$, we get that
$(\bar{t},\bar{x})$ is a local minimum of $v-\varphi_h$ and thus
\[
-\partial_t \phi_h(\bar{t},\bar{x}) + \widehat{\mathcal{H}}\bigl(\bar{t},\bar{x}, D_x\phi_h(\bar{t},\bar{x})\bigr)
\geq 0,
\]
that is,
\be
\label{eq:78}
0\leq -\partial_t \phi(\bar{t},\bar{x}) -h  
+ \widehat{\mathcal{H}}\biggl(\bar{t},\bar{x}, D_x\phi(\bar{t},\bar{x})
+h^2 \sum\nolimits_{i \in \ES} \frac{D\rho_i(\bar{x})}{\rho_i(\bar{x})}
\biggr).
\ee
We denote $w= D_x\phi(\bar{t},\bar{x})$, $\check{w}^i = (w_1-w_i,\dots,w_{d-1}-w_i,-w_i)\in \R^d$ for $i\in\ESd$ and $\check{w}^d=(w,0)$,  $y_j = D\rho_j(\bar{x})$ for $j\in\ES$ and similarly $\check{y}^i_j$, and
$\varpi^i = \check{w}^i + h^2\sum_{j\in\ES} \frac{\check{y}^i_j}{\rho_j(\bar{x})}$, for $i\in\ES$.
 We apply the following property, which is an immediate consequence of the definition   of the Hamiltonian in \eqref{H}:  
\be 
\label{eq:79}
H^i(t,m, z+ \zeta)- H^i(t,m,z) \geq - \langle Q_{i,\bullet} (t, a^*(t,i,m,z), m) , \zeta \rangle
\ee 
for any $z,\zeta\in\R^d$ with $z_i=\zeta_i=0$, $i\in \ES$, $m\in \Int$, $t\in [0,T]$, where $a^*(t,i,m,z) \in A$ is an argmax of \eqref{preH}, which might not be unique. (This is equivalent to say that $-Q_{i,\bullet} (t, a^*(t,i,m,z), m)$ belongs to the subdifferential of the convex function $H^i(t,m,z)$.)

Choosing a maximizer $a^*(\bar{t}, i, \bar{x}, \varpi^i)$ for any $i\in\ES$, applying \eqref{eq:79} and \eqref{Hx}, we obtain (we omit the dependence on $(\bar{t},\bar{x})$)
\begin{align*}
&\widehat{\mathcal{H}}\biggl( w
+h^2 \sum\nolimits_{j \in \ES} \frac{y_j}{\rho_j(\bar{x})}
\biggr)\\
&= \sum_{i\in \ESd} \bar{x}_i H^i\bigg( \check{w}^i + h^2\sum_{j\in\ES} \frac{\check{y}^i_j}{\rho_j(\bar{x})} \Bigg) + \bar{x}^{-d}H^d\bigg( \check{w}^d + h^2 \sum_{j\in\ES} \frac{\check{y}^d_j}{\rho_j(\bar{x})} \Bigg)
\\
&\leq 
\sum_{i\in \ESd} \bar{x}_i H^i( \check{w}^i ) + \bar{x}^{-d}H^d( \check{w}^d ) \\
&\hspace{15pt} - h^2 \sum_{i\in \ESd} \bar{x}_i \sum_{j\in\ES} \Big\langle Q_{i,\bullet}(a^*(i,\varpi^i )) , \frac{\check{y}^i_j}{\rho_j(\bar{x})} \Big\rangle 
 - h^2  \bar{x}^{-d} \sum_{j\in\ES} \Big\langle Q_{d,\bullet}(a^*(d,\varpi^d ) ), \frac{\check{y}^d_j}{\rho_j(\bar{x})} \Big\rangle
\\
&= \hat{\mathcal{H}}(w) 
- h^2 \sum_{i\in \ESd} \bar{x}_i \sum_{j\in\ESd} \frac{1}{\rho_j(\bar{x})}\bigg[\sum_{k\in \ESd} Q_{i,k}(a^*(i,\varpi^i )) ( \delta_{k,j}-\delta_{i,j} ) + Q_{i,d}(a^*(i, \varpi^i))(-\delta_{j,i}) \bigg]
\\
& \hspace{15pt}- h^2 \sum_{i\in \ESd} \bar{x}_i  \frac{1}{\rho_d(\bar{x})}\bigg[\sum_{k\in \ESd} Q_{i,k}(a^*(i,\varpi^i )) ( -1/\sqrt{d-1}+ 1/\sqrt{d-1}) + Q_{i,d}(a^*(i, \varpi^i)) 1/\sqrt{d-1}
\bigg]
\\
& \hspace{15pt}
- h^2  \bar{x}^{-d} \sum_{j\in\ESd} 
\frac{1}{\rho_j(\bar{x})}\sum_{k\in \ESd} Q_{d,k}(a^*(d,\varpi^d )) \delta_{k,j}
- h^2  \bar{x}^{-d} \frac{1}{\rho_d(\bar{x})}\sum_{k\in \ESd} 
Q_{d,k} (a^*(d,\varpi^d )) (-1/\sqrt{d-1})
\\
&\leq \hat{\mathcal{H}}(w) 
+ h^2 \sum_{i\in \ESd} \bar{x}_i  \frac{1}{\rho_i(\bar{x})} M(d-1)
+h^2 \frac{\bar{x}^{-d}}{\rho_d(\bar{x}) \sqrt{d-1}} M(d-1)\\
&= \hat{\mathcal{H}}(w) 
+ h^2  M d (d-1),
\end{align*}
where we used the bound $0\leq Q_{i,j}\leq M$, for any $i\neq j \in \ES$, and the definition of $\rho_i$ in the last two lines. The latter inequality, applied in \eqref{eq:78}, gives
\[
-\partial_t \phi(\bar{t},\bar{x})  
+ \widehat{\mathcal{H}}\bigl(\bar{t},\bar{x}, D_x\phi(\bar{t},\bar{x})\bigr) 
\geq h- h^2Md(d-1) \geq 0 \qquad \mbox{ if } h\leq \frac{1}{Md(d-1)},
\]
which implies that $v_h$ is a viscosity supersolution of \eqref{HJBx} on $\Intx$ if $h$ is small enough.

\emph{Step 3}.
As $\rho_i\leq 1$, we have $v_h(t,x)\geq v(t,x)$ for any $(t,x) \in [0,T] \times \Intx$.  In particular, $v_h(T,x)\geq v(T,x)\geq u(T,x)$ for any $x \in \Intx$. 
We denote $\rho(x)=\prod_{i=1}^d \rho_i(x)$. Since $u$ and $v$ are bounded, we find that for any $h>0$ there exists $\eta>0$ (which may depend on $h$) such that 
$-h^2 \log \rho(x) \geq \|u\|_\infty + \|v\|_{\infty}$ if $\rho(x)\leq\eta$. We denote  
$\Gamma^\eta= \{x\in\widehat{\mathcal S}_d : \rho(x) =\eta\}$, 
$\mathcal{O}^\eta= \{x\in\widehat{\mathcal S}_d : \rho(x) \geq\eta\}$, and 
$\mathcal{O}^\eta_c= \{x\in\widehat{\mathcal S}_d : \rho(x) \leq\eta\}$; note that $\mathcal{O}^\eta$ is a smooth domain.   Thus 
$v_h(t,x)\geq u(t,x)$ for any $t\in[0,T]$ and $x\in \mathcal{O}^\eta_c$, in particular for any $x\in\Gamma^\eta$. Therefore we can apply the comparison principle 
(see  \cite[Thm. II.9.1]{fleso}) in $[0,T] \times \mathcal{O}^\eta$, because 
$u,v_h \in\mathcal{C}([0,T] \times \mathcal{O}^\eta)$: we obtain $u\leq v_h$ on $[0,T] \times \mathcal{O}^\eta$ and hence $u \leq v_{h}$  on the entire $[0,T] \times \widehat{\mathcal S}_{d}$, since we already have 
$u \leq v_{h}$ on $[0,T] \times 
\mathcal{O}^\eta_{c}$. 
%
Finally, we obtain $u\leq v$ on $[0,T]\times\Intx$ by sending $h$ to 0, as $\lim_{h\rightarrow 0}v_h(t,x) = v(t,x)$ for any $(t,x) \in [0,T] 
\times 
{\Intx}$, and then the inequality $u\leq v$ can be extended up to the boundary of $\hat{S}_d$ by continuity. 
\end{proof}

\subsection{Classical solution}
\label{sec3.2}

Here we give the sufficient condition for the value function of the MFCP to belong to $\mathcal{C}^{1,1}([0,T]\times S^d)$.
This is the the convexity assumption \ref{C}: we prove hence point (3) in Theorem \ref{thm:mainmfc}.

\begin{thm}
\label{thm18}
Under Assumption \ref{C},  
the value function is in $\mathcal{C}^{1,1}([0,T]\times S^d)$, and thus it is the unique classical solution to \eqref{HJB} (it solves the equation also at boundary points).
\end{thm}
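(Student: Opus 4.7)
The overall strategy is to exploit the convex structure in Assumption \ref{C} through the change of variable from controls $\alpha$ to ``flows'' $w_{i,j}:=m_i\alpha_{i,j}$, $i\neq j$, which, under (\ref{B}2), turns the dynamics \eqref{mu} into the \emph{linear} equation $\dot m_i = \sum_{j\neq i}(w_{j,i}-w_{i,j})$ with constraints $0\leq w_{i,j}\leq Mm_i$. The cost \eqref{costmu} rewrites exactly as the functional appearing in (\ref{C}2) plus $G(m_T)$, hence is jointly convex in $(w,m)$. Since the feasible set is convex and parametrized linearly by the initial condition $m_0$, a standard convex-optimization argument yields that $m_0\mapsto V(t_0,m_0)$ is \emph{convex}, uniformly in $t_0$. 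The core of the proof will then consist in matching this convexity with a semiconcavity estimate coming from the $\mathcal{C}^{1,1}$ regularity in (\ref{C}1).

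For the semiconcavity, given $m^0,m^1\in S_d$, $\lambda\in(0,1)$ and $\bar m_0:=\lambda m^0+(1-\lambda)m^1$, let $\bar\alpha:[t_0,T]\to A^d$ be an optimal (open-loop) control for the MFCP started at $\bar m_0$, whose existence is guaranteed by Theorem \ref{thm:mainmfc}(2). Running \eqref{mu} with the same $\bar\alpha$ from the three initial data $m^0,m^1,\bar m_0$ produces trajectories $m^{(0)},m^{(1)},\bar m$; by (\ref{B}2) the equation is linear in $m$ once $\alpha$ is fixed, so $\bar m=\lambda m^{(0)}+(1-\lambda)m^{(1)}$ and $|m^{(0)}_t-m^{(1)}_t|\leq C|m^0-m^1|$ uniformly in $t$. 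By (\ref{C}1), the integrands $m\mapsto \sum_i m_i f^i(t,\bar\alpha^i(t),m)$ and $G$ are $\mathcal{C}^{1,1}$ in $m$, hence semiconcave with a uniform constant, and a second-order Taylor bound on the segment $[m^{(0)}_t,m^{(1)}_t]$ gives
\[
\lambda J(\bar\alpha;m^0)+(1-\lambda)J(\bar\alpha;m^1)-J(\bar\alpha;\bar m_0)\leq C\lambda(1-\lambda)|m^0-m^1|^2.
\]
Since $V(t_0,m^j)\leq J(\bar\alpha;m^j)$ for $j=0,1$ and $V(t_0,\bar m_0)=J(\bar\alpha;\bar m_0)$, the semiconcavity of $V(t_0,\cdot)$ follows.

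A function on $S_d$ that is both convex and semiconcave in $m$ has a Lipschitz-continuous spatial gradient, uniformly in $t$; combined with the Lipschitz continuity of $V$ in $t$ from Proposition \ref{propvisco1}, this yields $D_x\hat V$ globally Lipschitz on $[0,T]\times\hat S_d$, and $\partial_t V$ then inherits Lipschitz regularity via the HJB equation \eqref{HJB}, the right-hand side being Lipschitz in $(t,m)$ by Assumption \ref{A} together with the Lipschitz estimate on $D^iV$. Hence $V\in\mathcal{C}^{1,1}([0,T]\times S_d)$. Since $V$ is a viscosity solution on $\Int$ and is $\mathcal{C}^1$ up to the boundary, it solves \eqref{HJB} classically on all of $[0,T]\times S_d$, and uniqueness within the class of classical solutions is immediate from uniqueness of viscosity solutions established earlier.

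I expect the main obstacle to be the semiconcavity step: when $\bar m$ approaches the boundary of $S_d$ the optimal open-loop control $\bar\alpha$ may push the perturbed trajectories $m^{(0)},m^{(1)}$ close to (or onto) faces where some coordinate vanishes, and the constants hidden in the Taylor bound must remain uniform. My plan is to run the argument first for $\bar m_0\in\mathrm{Int}(S_d)$, using the invariance estimate \eqref{invInt} to keep all three trajectories in a common compact subset of $\mathrm{Int}(S_d)$ on which the data are uniformly $\mathcal{C}^{1,1}$, and then extend the semiconcavity inequality to boundary initial data by continuity, using the global Lipschitz regularity of $V$ and density of $\mathrm{Int}(S_d)$ in $S_d$.
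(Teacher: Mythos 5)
Your overall architecture is the same as the paper's: semiconcavity of $V$ in $m$ from the $\mathcal{C}^{1,1}$ data in (\ref{C}1) by running a fixed optimal control from perturbed initial conditions, convexity of $V$ in $m$ from the reformulation $w_{i,j}=m_i\alpha_{i,j}$ that makes the dynamics linear and the cost jointly convex via (\ref{C}2), and then the standard fact that convex plus semiconcave gives a Lipschitz gradient. (The paper delegates the first two steps to Theorems 7.4.11 and 7.4.13 of Cannarsa--Sinestrari rather than writing them out, and your worry about boundary uniformity in the semiconcavity step is a non-issue: (\ref{C}1) gives $F(\cdot,a,\cdot)$ and $G$ in $\mathcal{C}^{1,1}$ globally on $S_d$, and $S_d$ is convex and invariant, so the Taylor bound along the segment $[m^{(0)}_t,m^{(1)}_t]$ is automatically uniform.)

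The genuine gap is in the passage from spatial to joint time--space regularity. You establish convexity and semiconcavity of $V(t_0,\cdot)$ for each fixed $t_0$, which gives $D_x\hat V(t,\cdot)$ Lipschitz in $x$ uniformly in $t$; but the claim that this ``combined with the Lipschitz continuity of $V$ in $t$ \dots yields $D_x\hat V$ globally Lipschitz on $[0,T]\times\hat S_d$'' is false as an implication. From $|V(t,x)-V(s,x)|\le C|t-s|$ and $|D_x\hat V(t,x)-D_x\hat V(t,y)|\le L|x-y|$ one only gets, by the usual difference-quotient interpolation, $|D_x\hat V(t,x)-D_x\hat V(s,x)|\le C'\sqrt{|t-s|}$, i.e.\ $1/2$-H\"older continuity of the gradient in time --- not Lipschitz. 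Your subsequent step (Lipschitz regularity of $\partial_tV$ read off from \eqref{HJB}) is contingent on exactly the joint Lipschitz bound on $D^iV$ that is missing, so it cannot repair this. The fix, which is what the paper's citations actually deliver, is to run both of your arguments with simultaneous perturbations of the initial \emph{time and state}: for semiconcavity, compare the trajectories issued from $(t_0\pm h, m_0\pm p)$ under time-shifted copies of the optimal control for $(t_0,m_0)$; for semiconvexity, do the analogous two-sided comparison in $(t,m)$ (the paper notes that for this step one should revert to the original $\alpha$-formulation, whose coefficients are globally Lipschitz, since the $w$-formulation is only locally Lipschitz near $\partial S_d$). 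Joint semiconcavity and semiconvexity in $(t,m)$ with a common constant then give $V\in\mathcal{C}^{1,1}([0,T]\times S_d)$ directly (Cannarsa--Sinestrari, Cor.\ 3.3.8). The remainder of your argument (classical solution up to the boundary, uniqueness via the comparison principle or the verification theorem) is fine once this is in place.
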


We recall that a function $v:[0,T]\times S_d \rightarrow \R$ is called \emph{semiconcave}, resp. \emph{semiconvex}, on $[0,T]\times \Int$, with a constant $c$, if 
\be 
\frac{v(t+h,m+ p) -2v(t,x) + v(t-h,m-p)}{|h|^2 +  |p|^2} \leq c, \qquad \mbox{ resp. } \geq - c,
\ee 
for any $t\in [0,T]$, $m\in\Int$, $h$ with $t\pm h \in [0,T]$, and $p$ with $m\pm p \in \Int$. 

\begin{proof}
 We show that the value function is both semiconcave and semiconvex, in time and space, globally in $\mathrm{Int}(S^d)$, with a constant $c$; clearly it is equivalent to prove this propeties either for $V$ defined on $S_d$ or for $\hat{V}$ defined on $\hat{S}_d$. Recall again that $\Int$ is invariant for dynamics \eqref{mu}.  Hence Corollary  3.3.8 in \cite{cannarsa} ensures  that $V\in \mathcal{C}^{1,1}([0,T]\times \mathrm{Int}(S^d))$ and the Lipschitz constant of $D^i V$ is $c$, $\forall i\in\ES$. Thus in particular $V$ can be extended uniquely to a function in 
$\mathcal{C}^{1,1}([0,T]\times S^d)$, and then it solves \eqref{HJB} also at boundary points. The classical solution to \eqref{HJB} is unique because any solution is the value function, by a standard application of the verification theorem.

The value function is semiconcave on $[0,T]\times \Int$ by Theorem 7.4.11 in \cite{cannarsa}, thanks to Assumption (\ref{C}1); as above, to apply this result set on a Euclidean space, we have just to consider the equivalent formulation of the control problem on $\Intx$.  To prove that $V$ is semiconvex, we rewrite the MFCP in an equivalent formulation, with a control $w$,  such that the cost is  convex in $(m,w)$ and the dynamics is linear in $(m,w)$. Consider hence the problem of minimizing the cost 
\footnote{
This reformulation of the MFCP with the control $w$ is typically used for studying potential mean field games; see e.g. \cite{CGPT, BrianiCarda}.
}
\[
\widetilde{J}(w) := \int_0^T 
\sum_{i\in\ES} \mu^i_t f^i \left(t, \Big(\frac{w^{i,j}_t}{\mu^i_t} \Big)_{j\neq i} , \mu_t \right)
dt + \sum_{i\in \ES}g^i(\mu_T)\mu^i_{T},
\] 
where the couple $(\mu,w)$ satisfies the ODE
\[
\frac{d}{dt}\mu^i_t = \sum_j (w^{j,i}_t - w^{i,j}_t)
\]
and is subject to the constraints $w^{i,j}_t\geq 0$, $\frac{w^{i,j}_t}{\mu^i_t}\leq M$. If the initial condition $\mu_0\in \mathrm{Int}(S^d)$, then $\mu_t \in  \mathrm{Int}(S^d)$ for any $t\in[0,T]$, i.e. $\mu^i_t>0$, thus the cost is well-defined in this case. This control problem is indeed  well-defined only for $\mu_0\in \mathrm{Int}(S^d)$ and it is seen to be equivalent to the MFCP \eqref{mu}-\eqref{costmu} by setting 
$w^{i,j}=\mu^i \alpha^{i,j}$, meaning that the value function is the same.  The advantage in using this new formulation is that 
the dynamics is now linear in $w$ and the set of $(\mu,w)$ satisfying the constraints is convex. Moreover, the running cost is convex in $(m,w)$ and the terminal cost is convex in $m$ by Assumption (\ref{C}2).
Thus we can apply  \cite[Thm. 7.4.13]{cannarsa}, which says that  $V(t,m)$ is a convex function of $m\in \mathrm{Int}(S^d)$, for any $t\in [0,T]$.
 Then $V$ is semiconvex in time and space, in $[0,T]\times \mathrm{Int}(S^d)$, again by  \cite[Thm. 7.4.13]{cannarsa}, but using the original formulation of the MFCP in the proof therein, for which the coefficients are globally Lipschitz in $\mathrm{Int}(S^d)$, yielding thus global semiconvexity, while in the new formulation the cost is only locally Lipschitz in $m\in \mathrm{Int}(S^d)$.
\end{proof}

\subsection{Further properties and potential mean field games} 
\label{sec3.3}

We collect, for reference, other results concerning the mean field control problem and its relation, in some cases, with a mean field game. These are not used here, but might be useful for future works. They derive directly from the results of \cite[Section 7.4]{cannarsa} about deterministic control problems in a Euclidean space. Thus we consider the problem \eqref{dynx}-\eqref{costx} defined on $\hat{S}_d$ whose value function is denoted by $\hat{V}$. As before, for a function $G$ defined on $S_d$, we denote by $\hat{G}$ its version in local chart, i.e. $\hat{G}(x) = G(\check{x})$.


\begin{prop}
	\label{prop:solution:mfc:zero:epsilon}
	Assume \ref{A} and that $F(t, a, \cdot), Q_{i,j}(t,a,\cdot), G \in \mathcal{C}^1(S_d)$ and $\hat{\mathcal{H}}(t,\cdot,\cdot)\in\mathcal{C}^{1,1}(\hat{S}_d\times \R^{d-1})$, for 
	the Hamiltonian $\hat{\mathcal{H}}$  defined by \eqref{hatH}. 
	If $\alpha$ is an optimal control and $x$ the corresponding optimal trajectory, then there exists $w\in \mathcal{C}^1(0,T; \R^{d-1})$ such that
	\be
	\label{adjw}
\frac{d}{dt} w_t^j = \partial_{x_j} \hat{\mathcal{H}}(t,x_t, w_t), \qquad w_T^j= \partial_{x_j} \hat{G}(x_T);
\ee
	and $w_t$ belongs to the space superdifferential of $\hat{V}(t,x_t)$ for any time.

	Moreover, if $\hat{\mathcal H}(t,x, \cdot)$ is strictly convex for any $t\in[0,T]$ and $x\in \Intx$, and the costs $F$ and $G$ are semiconcave w.r.t. $m$, then, assuming that the control problem starts at $(t_0,x_0)\in [0,T]\times \Intx$,
	\begin{itemize}
	\item $\hat{V}$ is differentiable in $(t,x_t)$ for any $t\in(t_0,T]$, for any optimal trajectory $x$;
	\item
	$\hat{V}$ is differentiable in $(t_0,x_0)$ if and only if there exists a unique optimal trajectory $x$; in such case the adjoint process $w$ satisfies $w_t^j =\partial_{x_j} \hat{V}(t,x_t)$ for any $t\in [t_0,T]$. 
	\end{itemize}
\end{prop}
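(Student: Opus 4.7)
The plan is to translate everything to the local chart on $\hat{S}_d$, where the problem becomes a standard deterministic control problem on an Euclidean subset with invariant domain $\mathrm{Int}(\hat{S}_d)$, and then invoke the corresponding statements from \cite[Chap. 7.4]{cannarsa}. Since the smoothness assumptions force $\hat{\mathcal{H}}(t,\cdot,\cdot)\in\mathcal{C}^{1,1}(\hat{S}_d\times\R^{d-1})$ (which amounts to the regularity of $F$, $Q$ and the fact that the Hamiltonian is continuously differentiable in $w$ by \eqref{preH}--\eqref{H} together with the implicit/envelope argument as in \cite[Prop.~1]{gomes}), the dynamics $\dot{x}_t = b(t,x_t,\alpha(t))$ and running cost are of the standard form for which the Pontryagin-type results apply.

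For the first part, I would invoke the Pontryagin maximum principle in the form stated in \cite[Thm. 7.4.17]{cannarsa}: given any optimal pair $(\alpha,x)$ starting from an interior point, there exists a dual (adjoint) arc $w\in\mathcal{C}^1(0,T;\R^{d-1})$ solving
\[
\frac{d}{dt}w_t^j = \partial_{x_j}\hat{\mathcal{H}}(t,x_t,w_t),\qquad w_T^j = \partial_{x_j}\hat{G}(x_T),
\]
and such that $w_t$ belongs to the spatial superdifferential $D^+_x\hat{V}(t,x_t)$ at every $t\in[0,T]$. The only bookkeeping to do is to rewrite the Hamiltonian system with the convention of \eqref{Hx}, checking that the equation one obtains from the classical Pontryagin principle on the pre-Hamiltonian $\hat{\mathcal{H}}$ indeed coincides with \eqref{adjw}. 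I would also note that the invariance $\mathrm{Int}(\hat{S}_d)$ under \eqref{dynx}, established via \eqref{invInt}, ensures that optimal trajectories issuing from the interior remain in the interior, so no boundary-induced multipliers appear.

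For the second part, I would combine two ingredients from the same chapter. First, under the additional assumptions (strict convexity of $\hat{\mathcal{H}}(t,x,\cdot)$ and semiconcavity of $F,G$ in $m$), Theorem 7.4.11 of \cite{cannarsa} gives semiconcavity of $\hat{V}$, hence the superdifferential $D^+_{(t,x)}\hat{V}$ is non-empty and reduces to the (classical) gradient exactly at points of differentiability. Second, the propagation property of the dual arc along optimal trajectories (e.g.~\cite[Thm.~7.4.15/7.4.18]{cannarsa}) shows that for $t\in(t_0,T]$ the superdifferential $D^+_x\hat{V}(t,x_t)$ reduces to the singleton $\{w_t\}$, yielding spatial differentiability along the optimal trajectory; combining with semiconcavity in $(t,x)$ upgrades this to joint differentiability. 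Finally, the equivalence at the initial time $t_0$ between uniqueness of the optimal trajectory and differentiability of $\hat{V}$ at $(t_0,x_0)$ is the standard characterization (\cite[Thm.~7.4.18]{cannarsa}): each optimizer produces a dual arc in $D^+_x\hat{V}(t_0,x_0)$, and strict convexity of $\hat{\mathcal{H}}$ makes the map $w_{t_0}\mapsto\alpha$ injective, so the superdifferential is a singleton iff the optimizer is unique, in which case $w_t = \partial_x\hat{V}(t,x_t)$ throughout.

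The only genuine obstacle is to confirm that the \emph{interior-invariance} argument removes the complication of state constraints from Cannarsa's theorems: because $x_t$ stays in $\mathrm{Int}(\hat{S}_d)$ for $t>t_0$ (provided $x_0\in\mathrm{Int}(\hat{S}_d)$), every needle-variation and every backward characteristic in the semiconcavity/superdifferential machinery lives in the open set, so the classical proofs apply verbatim without boundary adjustments. Once this point is stated carefully, the rest is a direct application of the cited results and does not require any independent computation.
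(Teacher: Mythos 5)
Your proposal is correct and follows exactly the route the paper takes: the paper gives no separate proof of this proposition, stating only that it ``derives directly from the results of \cite[Section 7.4]{cannarsa}'' after passing to the local chart on $\hat{S}_d$ and using the invariance of $\Intx$ established in \eqref{invInt}. Your identification of the relevant ingredients (Pontryagin principle with the dual arc in the superdifferential, semiconcavity of $\hat V$, and the standard equivalence between differentiability at the initial point and uniqueness of the optimal trajectory) matches the intended argument, including the key observation that interior invariance removes any state-constraint complications.
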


The Hamiltonian is strictly convex w.r.t. $w$, for $x$ in the interior, in case e.g.  the running cost is given by \eqref{quadratic}, as observed in \cite{CD}. We recall that the value function is (time-space) Lipschitz continuous, and thus differentiable almost everywhere for the 1-dim Lebesgue measure in time and the $(d\!-\!1)$-dim. Lebesgue measure in space. Further, $\hat{V}$ is shown to be (time-space) semiconcave in the proof of Theorem \ref{thm18}, in case only \ref{B} and (\ref{C}1) hold. 
The first assertion is instead the Pontryagin principle, which holds also under weaker assumptions. 

In case the cost splits as
\be 
\label{costsplit}
f^i(t,a,m) = \ell^i(t,a) +f^i_0(t,m),
\ee 
the Hamiltonian $H$ in \eqref{H} splits as   $H^i(t,m,z) = H^i_0(t,a) - f^i_0(t,m)$, and then 
\eqref{adjw} becomes
\be 
\label{eqw}
-\frac{d}{dt} w_t^j + \hat{H}^j_0(t,x_t,w_t) - \hat{H}^d_0(t,x_t, w_t) = \partial_{x_j} \hat{F}_0(t,x_t), \qquad w_T = \partial_{x_j} \hat{G}(x_T), \qquad j\in\ESd,
\ee 
where $F_0(m)= \sum_{i\in\ES} m_i f^i_0(m)$ and $\hat{H}$ is defines as in \eqref{Hx}.
The above equation is in fact equivalent to the HJB equation in the MFG system, first analyzed in \cite{gomes}, which is 
\be 
\label{equ}
-\frac{d}{dt} u_t^i + H^i_0(t,\mu_t,(u^j_t-u^i_t)_{j\in\ES})  = \mathfrak{f}^i(t,\mu_t), \qquad w^j_T = \mathfrak{g}^i_t(m_T), \qquad i\in\ES,
\ee 
for a given running cost $\mathfrak{f}$ and terminal cost $\mathfrak{g}$, in case \ref{B} holds, so that the transition rate in the coupled equation for  $\mu$ in \eqref{mu} are given by $\alpha^{i,j}_t = a^*_j (t,i, (u^j_t-u^i_t)_{j\in\ES})$.
Equivalence holds if the functions  $\mathfrak{f}$ and $\mathfrak{g}$ are such that
\be 
\label{pot}
\hat{\mathfrak f}^i(t,x)-\hat{\mathfrak f}^d(t,x) = \partial_{x_i} \hat{F}_0(t,x), \qquad \hat{\mathfrak g}^i(t,x)-\hat{\mathfrak g}^d(t,x) = \partial_{x_i} \hat{G}(t,x), \qquad i\in\ESd;
\ee  
for instance, the latter holds true by defining $\tilde{\mathfrak f}^i(t,x) = \partial_{x_i} \hat{F}_0(t,x)$ and $\tilde{\mathfrak f}^d(t,x)=0$. 
 It is  easy to see that \eqref{eqw} and \eqref{equ} are
equivalent: given a solution $u$ to \eqref{equ}, it suffices to let $w^j=u^j-u^d$, $j\in\ESd$; conversely, given $w$ solution to \eqref{eqw}, it suffices to solve \eqref{equ}
where all the occurrences of $u^i-u^j$  have been replaced by $w^i-w^j$  if 
$j\in\ESd$ and by $w^i$ if $j=d$. 
We remark that \eqref{adjw} can not be interpreted as a mean field game if the cost does not split as in \eqref{costsplit}. In case the cost splits, we have shown that, if an optimal control for the MFCP exists, then it gives rise to a solution of a particular mean field game --with costs determined by \eqref{pot}-- and therefore this can provide more informations on the optimal control and on the corresponding optimal trajectory of the MFCP. 

In general, a mean field game, in which hence the costs $(\mathfrak{f}^i)_{i\in\ES}$ and $(\mathfrak{g}^i)_{i\in\ES}$ are given, is said to be potential if \eqref{pot} holds and the cost $f_0^i$ and $g^i$ defining the MFCP do not depend on $i$ --say they are equal to $f_0$ and $g$-- so that $F_0=f_0$ and $G=g$. Thus the mean field game system represents the necessary conditions for optimality of the deterministic MFCP and we refer to \cite{CD} for a detailed study of potential mean field games and corresponding MFCP, in particular for the interpretation of \eqref{pot}.

\section{Convergence Results}
\label{sec4}

We prove here the main convergence results: Theorems \ref{thmconvV} and \ref{thmchaos}.
Throughout this section,   $V^N$ denotes the value function of the mean field $N$-agent optimization \eqref{costNmu}-\eqref{ratemu}, while $V$ denotes the value function of the mean field control problem 
\eqref{costmu}-\eqref{mu}. We recall that $V^N$ is the classical solution to ODE \eqref{HJBN}, while $V$ is the viscosity solution to PDE \eqref{HJB}.

\subsection{Convergence of value functions}
\label{Sec:4.1}

Here, we prove Theorem \ref{thmconvV}. Assume hence that Assumption \ref{A} is in force.
We exploit here the characterization of $V$ as the unique viscosity solution to \eqref{HJB} on the closed set $S_d$;
see Definition \ref{defvisco} and Proposition \ref{propvisco1}.

We first need to show that $V^N$ is time-space Lipschitz-continuous, uniformly in $N$; this is \eqref{eq:23} in Proposition \ref{prop5}. In this point, the compactness of the control set $A$ is required (Assumption (\ref{A}1)).
\begin{lem}
\label{lemlip}
 If \ref{A} holds, then for every $t,s \in [0,T]$ and $m,p \in S_d^N$ 
\be 
\label{lipVN}
|V^N (t, m) -V^N(s,p) | \leq C (|t-s|+ |m-p|),
\ee
for a constant $C$ independent of $N$.
\end{lem}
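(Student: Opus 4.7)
The plan is to establish the two parts of the estimate separately, with the bulk of the work going into the uniform-in-$N$ Lipschitz bound in $m$, which I will obtain via a coupling argument on the underlying $N$-particle system.

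\textbf{Time-Lipschitz.} Under Assumption \ref{A}, the running cost $F$ and the terminal cost $G$ are bounded on $[0,T]\times A^d\times S_d$ and $S_d$ respectively, by compactness of $A$, $S_d$ together with the continuity in (\ref{A}3). Fix $m\in S_d^N$ and $s<t$. Given an $\epsilon$-optimal feedback for $V^N(t,m)$, extending it to $[s,T]$ by any admissible feedback on $[s,t]$ produces an admissible control for $V^N(s,m)$ whose cost exceeds $V^N(t,m)$ by at most $\|F\|_\infty (t-s)+\epsilon$. The opposite inequality is analogous, and sending $\epsilon\to 0$ yields $|V^N(t,m)-V^N(s,m)|\le C|t-s|$ with $C$ independent of $N$.

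\textbf{Space-Lipschitz: reduction to a Hamming estimate.} The identity $v^N(t,\bm{x})=V^N(t,\mu^N_{\bm{x}})$ from \eqref{vVN} reduces the space bound to showing
\[
|v^N(t,\bm{x})-v^N(t,\bm{y})|\le \frac{C}{N}\quad\text{whenever } \bm{x},\bm{y}\in\ES^N \text{ differ at exactly one coordinate.}
\]
Indeed, for any $m,p\in S_d^N$ we can pick $\bm{x},\bm{y}$ with $\mu^N_{\bm{x}}=m$, $\mu^N_{\bm{y}}=p$ and Hamming distance at most $\tfrac{N}{2}|m-p|_1$ (by putting $N\min(m_i,p_i)$ agents of type $i\in\ES$ in common and redistributing the excess), and then iterate the single-swap estimate along a Hamming path between $\bm{x}$ and $\bm{y}$.

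\textbf{Coupling argument.} For $\bm{x},\bm{y}$ differing at a single coordinate, pick, via the mean-field form guaranteed by Proposition \ref{prop5}, an $\epsilon$-optimal feedback $\alpha_N(s,i,m)$ for $v^N(t,\bm{y})$, and apply $\beta_k(s,\bm{X})=\alpha_N(s,X_k,\mu^N_{\bm{X}})$ in both systems. Construct a synchronous coupling $(\bm{X}^{\bm{x}}_s,\bm{X}^{\bm{y}}_s)$ on a common probability space by driving each coordinate $k$ with the same thinned Poisson clock of rate $\max Q$ and using a maximal coupling of the two transition distributions at each jump event. Set $D_s:=\#\{k : X^{\bm{x},k}_s\ne X^{\bm{y},k}_s\}$, so $D_t=1$. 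The Lipschitz dependence of $Q$ on $m$ from (\ref{A}2) combined with $|\mu^N_{\bm{X}^{\bm{x}}_s}-\mu^N_{\bm{X}^{\bm{y}}_s}|\le 2D_s/N$ should yield that matched coordinates decouple at total rate $O(D_s)$ and that mismatched coordinates contribute $O(D_s)$ to the evolution of $D_s$; Gronwall then gives $\mathbb{E}[D_s]\le e^{C(s-t)}$ uniformly in $N$. Combined with the boundedness and Lipschitz-in-$m$ properties of $f^i$ and $g^i$, a direct split of $|J^N(\bm{\beta};t,\bm{x})-J^N(\bm{\beta};t,\bm{y})|$ into contributions from matched and mismatched coordinates then bounds the difference by $\tfrac{C}{N}\int_t^T\mathbb{E}[D_s]\,ds+\tfrac{C}{N}\mathbb{E}[D_T]=O(1/N)$, and letting $\epsilon\to 0$ closes the single-swap bound.

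\textbf{Main obstacle.} The delicate point is to force the Gronwall constant to be independent of $N$. The difficulty is that the $\epsilon$-optimal $\alpha_N$ is only measurable in $m$, so at a matched coordinate with common private state $i$ the two controls $\alpha_N(s,i,\mu^{\bm{x}}_s)$ and $\alpha_N(s,i,\mu^{\bm{y}}_s)$ need not lie close together, and a naive coupling would let matched coordinates decouple at a combined rate of order $N$. Producing a matched-coordinate decoupling rate of the order $|\mu^{\bm{x}}_s-\mu^{\bm{y}}_s|\le 2D_s/N$ (so that, summed over the $N-D_s$ matched coordinates, it contributes only $O(D_s)$ to $\frac{d}{ds}\mathbb{E}[D_s]$) is the subtle step of the argument and requires a preliminary approximation of $\alpha_N$ by a feedback that is regular in $m$, exploiting compactness of $A$ and uniform continuity of $Q$ and $f^i$ in $a$ to absorb the incurred $o(1)$ sub-optimality.
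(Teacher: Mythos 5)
Your proposal contains a genuine gap, and it is precisely the one you flag as the ``main obstacle'': the coupling argument does not close. If the $\epsilon$-optimal feedback $\alpha_N(s,i,m)$ is only measurable in $m$, then as soon as the two empirical measures differ, every matched coordinate sees two potentially unrelated control values and decouples at rate $O(1)$; summed over the $N-D_s$ matched coordinates this contributes $O(N)$ rather than $O(D_s)$ to the evolution of $\E[D_s]$, and Gronwall fails. Your proposed repair --- mollifying $\alpha_N$ into a feedback regular in $m$ --- is not carried out and does not obviously work: if the mollified feedback is Lipschitz in $m$ with constant $L$, a matched coordinate decouples at rate $O(LD_s/N)$, so the Gronwall constant becomes $O(L)$, and nothing controls $L$ uniformly in $N$ as the incurred suboptimality is sent to zero. (Uniform continuity of $Q$ and $f^i$ in $a$ does not help, since the two control values need not be close in $A$ at all.) A secondary issue: your time-Lipschitz step asserts that concatenating an $\epsilon$-optimal control for $(t,m)$ after an arbitrary control on $[s,t]$ costs at most $V^N(t,m)+\|F\|_\infty(t-s)+\epsilon$, but the state at time $t$ is a random point $\mu^N_t\neq m$, so this already requires the space-Lipschitz bound together with an estimate of the form $\E|\mu^N_t-m|\le C(t-s)$.

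The paper's proof removes the obstacle at the source rather than confronting it. By Remark \ref{remeq}, $V^N$ is unchanged if one restricts to controls $\alpha(t,i)$ depending only on time and the private state --- in particular not on the empirical measure. For such a control, the two copies of the measure dynamics \eqref{ratemu} started from $m$ and $p$ and driven by the same Poisson random measure (via the representation \eqref{NSDE}) differ only through the Lipschitz dependence of $Q$ on $m$, so Gronwall gives $\sup_{s}\E|\mu^N_s-\rho^N_s|\le C|m-p|$ with $C$ depending only on the data, and the space bound follows from the Lipschitz continuity of $F$ and $G$. The time bound is then read off the ODE \eqref{HJBN}: the space bound gives $|D^{N,i}V^N|\le C$, hence $|\frac{d}{dt}V^N|\le C$ by the Lipschitz continuity of $H$ in $z$. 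If you want to salvage your particle-level approach, the fix is the same: run your coupling with an $\epsilon$-optimal control from the $m$-independent class, at which point the matched-coordinate decoupling rate is genuinely of order $|\mu^N_{\bm{X}^{\bm{x}}_s}-\mu^N_{\bm{X}^{\bm{y}}_s}|=O(D_s/N)$ per coordinate and your Hamming computation goes through.
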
  

\begin{proof}
We represent the dynamics of $\mu^N$, given by \eqref{ratemu}, as an SDE with respect to a Poisson random measure, as we did in \cite{cecfis}. 
We restrict attention to controls $\alpha:[0,T]\rightarrow A^d$ that are just functions of time and of the private state $i\in\ES$. Recall that, by Remark \ref{remeq}, the value function over this smaller class is the same $V^N$. 

Fix $N\in\mathbb{N}$ and denote by $\N(dt,d\theta)$ a standard Poisson random measure on $[0,T]\times [0,M]^{d\times d}$, with intensity measure $\nu$ on $[0,M]^{d\times d}$, where $M$ is the maximum of $Q$ (which is continuous over a compact set). Let $\nu$ be defined as the sum of the measures of the intersections with the axes, i.e. 
$\nu(E) = \sum_{i,j\in\ES} \mathrm{Leb} (E \cap \Theta_{i,j})$, where 
$\Theta_{i,j} := \{ \theta\in [0,M]^{d \times d} : \theta_{i',j'}=0     \mbox{ for all } 
(i',j')\neq (i,j) \}$, which is viewed as a subset of $\R$, as $E \cap \Theta_{i,j}$ is, and $\mathrm{Leb}$ is the Lebesgue measure on $\R$. Hence, the dynamics of $\mu^N$ 
is written as 
\be 
\label{NSDE}
 d \mu_t^N = \int_{[0,M]^{d\times d}} \frac1N (\delta_j-\delta_i) 
 \mathbbm{1}_{(0,N \mu^N_{i,t} Q_{i,j}(t, \alpha^i(t), \mu^N_t) ]} \N(dt, d\theta).
\ee 
This Equation is well-posed because $\mu^N$ takes values in $S^N_d$, which is finite, and then \eqref{ratemu} follows by \cite[Eq. (2.34)]{cecfis}. Observe that $\frac1N (\delta_j-\delta_i)$ represents the increment of $\mu^N$, while the indicator function gives the transition rate. 

Let $\mu^N$ start in $m$ and $\rho^N$ start in $p$, at a fixed time $t\in [0,T)$, with the same control $\alpha$. 
Let $\epsilon>0$ and $\alpha:[t,T]\rightarrow A^d$ be an $\epsilon$-optimal control for the problem starting at $(t,m)$, i.e. 
\[
J^N(t,m,\alpha)\leq \inf_{\beta} J^N(t,m,\beta) +\epsilon =V^N(t,m) +\epsilon,
\]
with an obvious notation for $J^N(t,m,\alpha)$. Then, by \cite[Lemma 3]{cecfis}, it follows that 
\begin{align*}
\E |\mu^N_s -\rho^N_s| &\leq |m-p| + \frac1N \sum_{i,j} |\delta_j-\delta_i| \E\int_t^s
| N \mu^N_{i,r} Q_{i,j}(r, \alpha^i(r), \mu^N_r) - N \rho^N_{i,t} Q_{i,j}(r, \alpha^i(r), \rho^N_r) | dr \\
& \leq |m-p| + C \int_t^s \E |\mu^N_r -\rho^N_r| dr,
\end{align*}  
by using the Lipschitz-continuity of $Q$, and thus Gronwall's lemma yields
\be 
\label{lipmuN}
\sup_{s\in [t,T]} \E |\mu^N_s -\rho^N_s| \leq C |m-p|. 
\ee 
Then 
\begin{align*}
V^N(t,p)-V^N(t,m) &\leq J^N(t,p,\alpha) - J^N(t,m,\alpha)  +\epsilon  \\
&\leq \E \int_t^T |F(s,\alpha(s), \mu^N_s) - F(s,\alpha(s), \rho^N_s)| ds 
+ \E |G(\mu^N_T) -G(\rho^N_T)|  +\epsilon  \\
&\leq C \sup_{s \in [t,T]} \E| \mu^N_s -\rho^N_s| +\epsilon \leq C|m-p| +\epsilon,
\end{align*} 
where we applied the Lipschitz-continuity of $F$ and $G$ (defined by \eqref{defFG}) and \eqref{lipmuN}. Taking the limit as $\epsilon$ vanishes, we get
$V^N(t,p)-V^N(t,m) \leq C|m-p|$ and then, changing the role of $m$ and $p$ we obtain also the opposite inequality, which provides
\be 
\label{lipVNmu}
|V^N(t,p)-V^N(t,m)| \leq C|m-p|.
\ee 

To prove the Lipschitz-continuity in time, note that \eqref{lipVNmu} implies 
$|D^{N,i}V^N(t,m)|\leq C$ for any $t\in[0,T]$ and $m\in S^N_d$, and thus, recalling that $V^N$ is $\mathcal{C}^1$ in time, from the HJB equation \eqref{HJBN} we derive that for any $t$ and $m$,
\begin{align*}
\left|\frac{d}{dt}V^N(t,m) \right| &= \left| \sum_{i\in \ES} m_i H^i(t, m, D^{N,i}V^N(t,m)) \right| \\
&\leq C(1+|D^{N,i}V^N(t,m)|) \leq C,
\end{align*} 
where we used the Lipschitz-continuity of $H$ w.r.t. $z$. Therefore
\[
\sup_{t\in[0,T], m\in S^N_d} \left|\frac{d}{dt}V^N(t,m) \right| \leq C,
\]
which, together with \eqref{lipVNmu}, yields \eqref{lipVN}.
\end{proof}

We now turn to the proof of Theorem \ref{thmconvV}.
As a first step to prove the convergence, it is required to extend the definition of $V^N$ outside $S_d^N$. One way to do this, similarly to \cite{ICD}, would be to consider the same control problem \eqref{costNmu}-\eqref{ratemu}, but starting at any point in the simplex, not only on $S_d^N$. However, in this way, the dynamics of $\mu^N$ would go also outside the simplex, and thus we prefer to not follow this strategy. Instead, we use a piecewise constant interpolation of $V^N$, and thus we have to pay a price in order to define it carefully at boundary points, so that the maximum in the usual doubling of variable argument (see \eqref{eq:63} below) is attained.

\begin{proof}[Proof of Theorem \ref{thmconvV}]

{ \ }

We first prove that 
	\be 
	\label{EN+}
	E_N^+ := \sup_{t\in [0,T], m\in S_d^N} 
	\left( V(t,m) - V^N(t,m) \right)\leq \frac{C}{\sqrt{N}}.
	\ee 
	If $E_N^+ \leq 0$ then \eqref{EN+} trivially holds, thus we assume that $E_N^+ >0$.
Since $V^N$ is defined on the grid $S^N_d$ only, we have to construct a piecewise constant extension $\tilde{V}^N$ defined on the whole $S_d$. Denote by $n(N,d)$ the number of elements in $S^N_d$, and cover the simplex by closed cells $(\Gamma_k)_{k=1}^{n(d,N)}$ centered in points of $S^N_d$ and invariant by translations. Note that the cells centered at points on the boundary cover also points outside the simplex. 
Define $\tilde{V}^N(p) = V^N(p_k)$ if $p\in\mathrm{Int}(\Gamma_k)$ and $p_k$ is the center of $\Gamma_k$. It remains to define $\tilde V^N$ at the boundaries of $\Gamma_k$. 

	
	\emph{Step 1}. 
	We exploit the usual argument of doubling the variables, which prompts us to consider
	the function, to be defined on $[0,T]^2 \times S_d ^2$,
	\be 
	\label{eq:63}
	\Phi(t,s,m,p):= V(t,m)-{\tilde V}^N(s,p)- \frac{|t-s|^2}{2\epsilon} 
	- \frac{|m-p|^2}{2\epsilon} - \frac{2T-t-s}{4T}E_N^+,
	\ee 
	where $\epsilon$ is a parameter to be fixed later in terms of $N$.
	Then the value of $\tilde{V}^N$ at the boundaries of the cells has to be chosen such that the above function admits a maximum. 
	Let us give first the idea of our construction.  The strategy is to define first $\tilde{V}^N$ constant in any closed cell, then perform the maximization in any closed cell --in which a maximum of $\Phi$ exists-- and thus take the maximum of the values obtained, so that  a maximum point for $\Phi$ exists.  If this maximum point lies in the interior of a cell, then there is no problem and  the value of $\tilde{V}^N$ at the boundaries does not matter. The critical situation is when the maximum point belongs to the boundary of a cell. In such situation, we have to define carefully $\tilde{V}^N$ at the boundary of the neighboring cells in order to verify equality \eqref{eq46} below. It is required because we want to exploit the ODE  \eqref{HJBN}, and this is indeed the main reason for considering a piecewise constant interpolation. We give below an example of our construction in case $d=2$, the generalization to $d>2$ being not difficult.
	
	Now, more precisely, for a cell $\Gamma_k$ centered at $p_k\in S^N_d$, $k\in\{1,\dots,n(N,d)\}$, let $\tilde{V}^N_k(p):=V^N(p_k)$ for any $p\in\Gamma_k$, thus also on the boundary of $\Gamma_k$.	
	Then define $\Phi_k$ as is \eqref{eq:63}, but with $p\in\Gamma_k$ and $\tilde{V}^N$ therein replaced by $\tilde{V}^N_k$, and let 
	\[
	\gamma_k:= \max_{[0,T]^2\times S_d \times (\Gamma_k\cap S_d)} \Phi_k(t,s,m,p).
	\]
	Such a maximum exists and is attained at a point $(\bar{t}_k,\bar{s}_k,\bar{m}_k,\bar{p}_k)$, which might  be non-unique and such that $\bar{p}_k$ belongs to the boundary of the cell $\Gamma_k$: we then let  
	\[
	\gamma= \max_{k=1,\dots,n(N,d)} \gamma_k =:\gamma_{\bar{k}}, \qquad (\bar{t},\bar{s},\bar{m},\bar{p}):= (\bar{t}_{\bar{k}},\bar{s}_{\bar{k}},\bar{m}_{\bar{k}},\bar{p}_{\bar{k}}). 
	\]
	We can now define  $\tilde V^N$ such that $\tilde V^N(\bar{s},\bar{p}_{\bar{k}}) = \tilde{V}^N(\bar{p}) := \tilde{V}^N_{\bar{k}} (\bar{p}) = V^N(\bar{s},p_{\bar{k}})$, the last equality holding by the definition above,   where we recall $p_{\bar{k}}\in S^N_d$ is the center of the cell $\Gamma_{\bar{k}}$.  Note that $\bar{p} = \bar{p}_{\bar{k}}$ may belong to the boundary of $\Gamma_{\bar{k}}$, in which case we have defined $\tilde{V}^N$ at the boundary of $\Gamma_{\bar{k}}$. 
	In addition, we require that 
	\be 
	\label{eq46}
	V^N\Big(\bar{s},p_{\bar{k}} + \frac1N(\delta_j-\delta_i)\Big)=\tilde{V}^N\Big(\bar{s},\bar{p} + \frac1N(\delta_j-\delta_i)\Big)
	\ee
	for any $i,j\in \ES$. Note that this is automatically satisfied if $\bar{p} \in \mathrm{Int}(S_d)$, while it gives the definition of $\tilde{V}^N$ on a part of the boundary of the cells bordering $\Gamma_{\bar{k}}$, if $\bar{p}$ belongs to the boundary of $\Gamma_{\bar{k}}$. 
	Lastly, $\tilde{V}^N$ can be defined
	arbitrarily right or left continuous at the other boundary points.
	
	Hence this construction guarantees that $\Phi$ in \eqref{eq:63} admits a maximum on $[0,T]^2\times S_d^2$ at $(\bar{t},\bar{s},\bar{m},\bar{p})$. We stress that $\tilde V^N$ is defined also outside the simplex, so that the RHS of \eqref{eq46} is meaningful in case $p_{\bar{k}} + \frac1N(\delta_j-\delta_i)$ belongs to the boundary of the simplex. 
	Note that $\tilde{V}^N$ is not continuous in space, but it remains Lipschitz in time, uniformly in space and w.r.t. $N$.
	

	
	Before proceeding with the rest of the proof, let us give an example of the above construction in case $d=2$. In this case the simplex is one dimensional and, if projected on $[0,1]$, we have
	$S^N_2= \{\frac{k}{N} : k=0,1,\dots,N\}$, the increments are $\pm \frac1N$ and the cells are $\Gamma_k= [ \frac{k}{N} -\frac{1}{2N} , \frac{k}{N} +\frac{1}{2N} ] $, for $k=0,\dots, N$. Let us assume that we are in the critical situation, that is, $\bar{p}$ belongs to the boundary of $\Gamma_{\bar{k}}$; for instance, we can assume that 
	$\bar{p}= \bar{p}_{\bar{k}} = \frac{\bar{k}}{N} +\frac{1}{2N}$.  
	The following picture provides then the construction of $\tilde{V}^N$ (where we omit the time dependence): 
	
	\hspace{2cm}
	\includegraphics[scale=0.5]{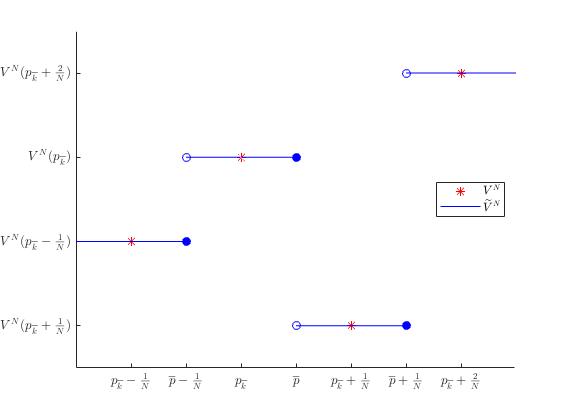}
	\newline	 \noindent
	Namely, if $\tilde{V}^N$ is left continuous at $\bar{p}$ then it is defined to be left continuous also at $\bar{p}\pm \frac1N$, so that 
	$V^N(p_{\bar{k}} \pm \frac1N) = \tilde{V}^N (\bar{p} \pm \frac1N)$. This latter property is crucial in the proof, see \eqref{eq49} below. 
	We remark again that $\tilde{V}^N$ can be define arbitrarily right or left continuous at the other boundary points. 
It is not difficult to generalize this construction to $d>2$.
	
	\emph{Step 2}. The inequality $\Phi(\bar{t},\bar{s},\bar{m},\bar{p})
	\geq \Phi(\bar{t},\bar{s},\bar{p},\bar{p})$ gives
	\[
	\frac{|\bar{m}-\bar{p}|^2}{2\epsilon} \leq V(\bar{t},\bar{m}) - V(\bar{t}, \bar{p}) ,
	\]
	which, using the Lipschitz continuity of $V$, yields
	\be 
	\label{28}
	|\bar{m}-\bar{p}|\leq C \epsilon. 
	\ee 
	Similarly, the inequality $\Phi(\bar{t},\bar{s},\bar{m},\bar{p})
	\geq \Phi(\bar{s},\bar{s},\bar{m},\bar{p})$ provides
	\[
	\frac{|\bar{t}-\bar{s}|^2}{2\epsilon} \leq V(\bar{t},\bar{m}) - V(\bar{s}, \bar{m}) + \frac{\bar{t}-\bar{s}}{4T} E_N^+,
	\]
	which, as $V$ is Lipschitz-continuous and $E_N^+$ is bounded, gives
	\be 
	\label{29}
	|\bar{t}-\bar{s}|\leq C \epsilon.
	\ee 
	We stress that \eqref{28} and \eqref{29} hold also in case 
	$(\bar{t},\bar{s},\bar{m},\bar{p})$ belong to the (time and space) boundary of $[0,T]^2\times S_d^2$.
	We fix now $\epsilon = \frac{1}{\sqrt{N}}$.
	
	\emph{Step 3}. 
	In order to prove \eqref{EN+}, we consider the three cases: either $\bar{t} =T$, or $\bar{s}=T$, or $\bar{t},\bar{s} <T$. 
	
	\textbf{First case:  $\bar{t} =T$ and $\bar{s} \in[0,T]$}.
	The inequlity $\Phi(\bar{t},\bar{s},\bar{m},\bar{p}) \geq
	\Phi(t,t,m,m)$, for any $(t,m)\in [0,T]\times S^N_d$, exploiting the Lipschitz-continuity in time of $\tilde{V}^N$ and in $S_d$ of $G$, gives
	\begin{align*}
	V(t,m) -\tilde{V}^N(t,m) &\leq G(\bar{m}) -G(\bar{p}) +G(\bar{p})- \tilde{V}^N(\bar{s},\bar{p}) 
	+ \frac{2T - 2t}{4T} E_N^+ 
	\\
	&\leq C|\bar{m} - \bar{p}| + C| T -\bar{s}| 
	+ \frac12 E_N^+.
	\end{align*}
	 Taking the supremum of the l.h.s. and applying \eqref{28} and \eqref{29}, we have
	 \[
	 E_N^+ \leq C\epsilon  + \frac12 E_N^+ \leq \frac{C}{\sqrt{N}} + \frac12 E_N^+ ,
	 \]
	which yields \eqref{EN+}. 
	
	\textbf{Second case: $\bar{s}=T$ and $\bar{t} \in[0,T]$}. This can be  treated as the first case, by using the Lipshitz-continuity in time of $V$. 
	
	\textbf{Third case:  $\bar{t} \in [0,T)$ and $\bar{s} \in [0,T)$}.
	Let $p_{\bar{k}}$ be, as above, the point in $S_d^N$ such that $\tilde{V}^N(\bar{s},p_{\bar{k}})=\tilde{V}^N(\bar{s},\bar{p})$. 
	Here we use the piecewise constant construction of $\tilde V^N$ and in particular \eqref{eq46}. 
	Hence, from \eqref{HJBN} we obtain 
	\be 
	\label{eq49}
	-\frac{d}{dt} \tilde{V}^N(\bar{s},\bar{p}) - 
	\sum_i p_{\bar{k}}^i \left\{
	\langle Q_{i,\bullet}(\bar{s},a^*_i,p_{\bar{k}})
	 , D^{N,i}\tilde{V}^{N}(\bar{s},\bar{p}) \rangle  + f^i(\bar{s} ,a^*_i,p_{\bar{k}})\right\}=0,
	\ee 
	where $a^*_i = a^*_i(\bar{s}, p_{\bar{k}})$ is a point in $A$ which attains the maximum in \eqref{preH}.
	We recall that $\tilde V^N$ is defined also in case $\bar{p} +\frac1N (\delta_j -\delta_i)$ is outside the simplex, which could happen if $p_{\bar{k}} +\frac1N (\delta_j -\delta_i)$ belongs to the boundary; while $p_{\bar{k}} +\frac1N (\delta_j -\delta_i)$ can be outside $S_d$ only if $p_{\bar{k}}^i=0$, in which case the term in \eqref{eq49} is zero. 
	
	Since $\tilde{V}^N-\varphi$ has a minimum at $(\bar{s},\bar{p})$, where
	\[
	\varphi(s,p)= V(\bar{t},\bar{m}) - \frac{|s-\bar{t}|^2}{2\epsilon} 
	- \frac{|p-\bar{m}|^2}{2\epsilon} - \frac{2T-s-\bar{t}}{4T}E_N^+,
	\]
	we get 
	$\frac{d}{dt} \tilde{V}^N(\bar{s},\bar{p})\geq 
	\partial_t\varphi(\bar{s},\bar{p})$, with equality if $\bar{s}\neq 0$, 	
	and thus 
	\be 
	\label{31}
	\frac{\bar{s}-\bar{t}}{\epsilon} - \frac{1}{4T}E_N^+
	- 
	\sum_i p_{\bar{k}}^i \left\{
	\langle Q_{i,\bullet}(\bar{s},a^*_i,p_{\bar{k}})
	 , D^{N,i}V^{N}(\bar{s},\bar{p}) \rangle + f^i(\bar{s} ,a^*_i,p_{\bar{k}})\right\}\geq 0.
	\ee
	On the other hand, as 
	$V-\psi$ has a maximum at $(\bar{t},\bar{m})$, where
	\[
	\psi(t,m) = \tilde{V}^N(\bar{s},\bar{p})+ \frac{|\bar{s}-t|^2}{2\epsilon} 
	+ \frac{|\bar{p}-m|^2}{2\epsilon} + \frac{2T-\bar{s}-t}{4T}E_N^+,
	\]
	since $V$ is a viscosity subsolution of \eqref{HJB} on the entire $S_d$ (see again Definition \ref{defvisco}) and $\psi$ is indeed defined on $\R^d$, we get
	\[ 
	-\frac{\bar{t}-\bar{s}}{\epsilon} + \frac{1}{4T}E_N^+ 
	+ \sum_i \bar{m}_i H^i  \left(\bar{t},\bar{m},\bigg(\frac{\langle \bar{m}-\bar{p}, \delta_j-\delta_i \rangle}{\epsilon}\bigg)_{j\in\ES} \right)
	\leq 0,
	\] 
	which, by definition of the Hamiltonian, gives 
	\be
	\begin{split}
	\label{33} 
	&-\frac{\bar{t}-\bar{s}}{\epsilon} + \frac{1}{4T}E_N^+ \\ 
	&- \sum_i \bar{m}_i 
	\bigg\{\bigg\langle Q_{i,\bullet}(\bar{t},a^*_i,\bar{m}) 
	 , \!\bigg(\frac{\langle \bar{m}-\bar{p} , \delta_j-\delta_i \rangle}{\epsilon}\bigg)_{j\in\ES} \bigg\rangle \!  + f^i(\bar{t} ,a^*_i,\bar{m})\bigg\}
	\leq 0.
	\end{split}
	\ee 
	The inequality
	$\Phi(\bar{t},\bar{s},\bar{m},\bar{p})
	\geq \Phi(\bar{t},\bar{s},\bar{m},\bar{p}+\frac1N (\delta_j-\delta_i))$ gives, for any $i$ and $j$ in $\ES$,  
	\begin{align*} 
	\tilde{V}^N\Big(\bar{s}, \bar{p} + \frac1N (\delta_j-\delta_i)\Big) 
	&\geq \tilde{V}^N(\bar{s},\bar{p}) + \frac{|\bar{m}-\bar{p}|^2}{2\epsilon}
	- \frac{|\bar{m}-\bar{p} - \frac1N (\delta_j-\delta_i)|^2}{2\epsilon}\\
	&\geq \tilde{V}^N(\bar{s},\bar{p}) + \frac{1}{N\epsilon} \bigg\langle \delta_j-\delta_i , \bar{m}-\bar{p} - \frac{1}{2N}(\delta_j-\delta_i) \bigg\rangle , 
	\end{align*} 
	which, applied in \eqref{31}, yields, as $Q_{i,j}\geq0$,
	\be 
	\label{34}
	\frac{\bar{s}-\bar{t}}{\epsilon} - \frac{1}{4T}E_N^+ \geq  
	\sum_i p_{\bar{k}}^i \bigg\{
	\bigg\langle Q_{i,\bullet}(\bar{s},a^*_i,p_{\bar{k}}) ,
	  \bigg(\frac{\langle \bar{m}-\bar{p} , \delta_j-\delta_i \rangle}{\epsilon}
	-\frac{1}{N\epsilon} 
	\bigg)_{j\in\ES} \bigg\rangle + f^i(\bar{s}, a^*_i, p_{\bar{k}} ) \bigg\}.
	\ee 
	Summing \eqref{33} and \eqref{34}, using Assumption \ref{A} (i.e. the boundedness of $Q$ and the Lipschitz-continuity of $Q$ and $F$  w.r.t. $(t,m)$), we obtain
	\begin{align*} 
	\frac{2}{4T}E_N^+ &\leq 
	\sum_i \bar{m}_i 
	\bigg\{ \bigg\langle Q_{i,\bullet}(\bar{s},a^*_i,\bar{m}) ,  \bigg(\frac{\langle \bar{m}-\bar{p} , \delta_j-\delta_i \rangle}{\epsilon}\bigg)_{j\in\ES} \bigg\rangle  + f^i(\bar{s} ,a^*_i,\bar{m})\bigg\}\\
	& \qquad -
	\sum_i p_{\bar{k}}^i \bigg\{ \bigg\langle  Q_{i,\bullet}(\bar{t},a^*_i,p_{\bar{k}}) ,  
	\bigg(\frac{\langle \bar{m}-\bar{p} , \delta_j-\delta_i\rangle}{\epsilon}
	-\frac{1}{N\epsilon} 
	\bigg)_{j\in\ES} \bigg\rangle
	+ f^i(\bar{t}, a^*_i,p_{\bar{k}}) \bigg\}\\
	&\leq C|\bar{t}-\bar{s}| + C |\bar{m}-p_{\bar{k}}| + C \frac{|\bar{m}-\bar{p}|^2}{2\epsilon} 
	+ \frac{C}{N\epsilon},
	\end{align*} 
	which provides, using \eqref{28}, \eqref{29} and that $|\bar{p}-p_{\bar{k}}|\leq\frac1N$ by construction, 
	\begin{align*} 
	E_N^+ &\leq C\left(|\bar{t}-\bar{s}| +   |\bar{m}-\bar{p}|  +\frac1N +\frac{|\bar{m}-\bar{p}|^2}{2\epsilon} 
	+ \frac{1}{N\epsilon} 
	\right) \leq C \left(3 \epsilon + \frac1N + \frac{1}{N \epsilon} \right) 
	\leq \frac{C}{\sqrt{N}},
	\end{align*} 
	recalling that  $\epsilon= \frac{1}{\sqrt{N}}$.
	
	\emph{Step 4}. The opposite inequality,
	\be 
	\label{EN-}
	E_N^- := \sup_{t\in [0,T], m\in S_d^N} 
	\left(  V^N(t,m) - V(t,m)\right)\leq \frac{C}{\sqrt{N}},
	\ee 
	can be proved in the same way, by changing the roles of $\tilde V^N$ and $V$, and using instead the viscosity supersolution property of $V$. Finally, \eqref{EN+} and \eqref{EN-} give \eqref{convV}.
\end{proof}

\begin{rem}
Similarly to \cite{ICD}, see also \cite[Sec. VI.1]{BCD}, it might be possible to establish a stronger convergence rate of order $1/N$, if $V^N$ is semiconcave uniformly in $N$:
\[
V^N(t, m+p)-2 V^N(t, m) + V^N(t,m-p) \leq c |p|^2,
\]
for a constant $c$ independent of $N$. Such estimate should hold if the costs $F$ and $G$ are semiconcave in space. This is left to future work. 
\end{rem}

As a consequence, we can also prove Theorem \ref{thmappr}.
In the proof as weel as in the next section, we use several times the following basic estimate: if $\bm{\xi}=(\xi_1,\dots,\xi_N)$ is a vector of $N$ i.i.d. random variables with values in $\ES$, such that $\mathrm{Law}(\xi_1)=m \in S_d$,  then 
\be 
\label{multi}
\E | \mu^N_{\bm{\xi}} - m| \leq \frac{1}{\sqrt{N}} \frac{\sqrt{d}}{2}.
\ee 
Indeed, $N\mu^N_{\bm{\xi}}= (N\mu^N_{\bm{\xi}}[i])_{i\in\ES}$ has a multinomial distribution with parameters $N$ and $m$, and thus the variance is $\E| N\mu^N_{\bm{\xi}}[i]-Nm_i|^2 = N m_i(1-m_i)$. Hence \eqref{multi} follows by Cauchy-Schwarz inequality and recalling that we are using the Euclidean norm.

\begin{proof}[Proof of Theorem \ref{thmappr}]
Let $\alpha^N(t,i,m)$ be an optimal feedback control for the $N$-agent optimization and $\alpha(t,i)$ be $\epsilon$-optimal for the MFCP, i.e. 
\be
\label{epsopt}
V(0,m_0)\leq J(\alpha) \leq V(0,m_0) +\epsilon.
\ee
Recall that the dynamics start at time 0 and the initial condition $\bm{X}_0$ is assumed to be i.i.d. with $\Law(X^1_0)= m_0$. 
Since $J^N(\alpha^N)= \E [V^N(0,\mu^N_0)]$, we have 
\begin{align*}
0&\leq J^N(\alpha) -J^N(\alpha^N) \\
&\leq |J^N(\alpha)- J(\alpha)| 
+ |J(\alpha)- V(0,m_0)| + |V(0,m_0) - \E[V(0,\mu^N_0)]| + |\E[V(0,\mu^N_0)] - \E[V^N(0,\mu^N_0)]| \\
& \leq |J^N(\alpha)-J(\alpha)| + \epsilon + C \E | \mu^N_0 -m_0| + \frac{C}{\sqrt{N}},
\end{align*}
where we used \eqref{convV}, \eqref{epsopt}, and the Lipschitz-continuity of $V$. As $\mu^N_0$ is the empirical measure of an i.i.d sequence, \eqref{multi} permits to bound $\E | \mu^N_0 -m_0|$.
Thus, in order to prove \eqref{appr}, it remains to show 
\be 
\label{eq56}
|J^N(\alpha)-J(\alpha)| \leq  \frac{C}{\sqrt{N}}.
\ee 
This follows by standard arguments in propagation of chaos. 
 
Let $\tilde{\rho}^N$ be the process given by dynamics \eqref{ratemu}, when using the decentralized control $\alpha(t,i)$. 
Assumption (\ref{A}3) gives (denoting $\alpha= (\alpha^1,\dots,\alpha^d)$ and using definition \eqref{defFG})
\begin{align*}
|J^N(\alpha)-J(\alpha)| &\leq \E \int_0^T \left|
F(t, \alpha(t), \tilde\rho^N_t) - F(t,\alpha(t), \mu_t)\right|dt + 
\E \left| G(\tilde\rho^N_t)-G(\mu_t) \right| \\
&\leq C\sup_{t\in [0,T]} \E|\tilde\rho^N_t - \mu_t|.
\end{align*}
Hence, to prove \eqref{eq56}, we have to show
\be 
\label{eq57}
\sup_{t\in [0,T]} \E|\tilde\rho^N_t - \mu_t| \leq \frac{C}{\sqrt{N}}.
\ee
We use the SDE  representation of the dynamics introduced in \eqref{NSDE}, thus $\tilde\rho^N$ solves the SDE
\[
d \tilde\rho_t^N = \int_{[0,M]^{d\times d}} \frac1N (\delta_j-\delta_i) 
 \mathbbm{1}_{(0,N \tilde\rho^N_{i,t} Q_{i,j}(t, \alpha^i(t), \tilde\rho^N_t) ]} \N(dt, d\theta).
\]
This process is not the empirical measure of a vector of independent processes because of the dependence of $Q$ on $m$, thus we introduce also the process $\tilde\mu^N$ in which the empirical measure is replaced by the limit deterministic flow $\mu$: $\tilde\mu^N$ solves
\[
d \tilde\mu_t^N = \int_{[0,M]^{d\times d}} \frac1N (\delta_j-\delta_i) 
 \mathbbm{1}_{(0,N \mu_{i,t} Q_{i,j}(t, \alpha^i(t), \mu_t) ]} \N(dt, d\theta).
\]
and represents the empirical measure associated to $N$ i.i.d copies of the limit process $\tilde{X}$ satisfying \eqref{limX} and such that $\Law (\tilde{X}_t)= \mu_t$. Again \eqref{multi} gives
\be 
\label{eq58}
\sup_{t\in [0,T]} \E| \tilde\mu^N_t - \mu_t| \leq  \frac{C}{\sqrt{N}}.
\ee 
Thus it remains to estimate the distance between $\tilde\rho^N$ and $\tilde \mu^N$. As in the proof of Lemma \ref{lemlip}, the representation in terms of SDEs yields, by exploiting the 
Lipschitz-continuity of $Q$ w.r.t. the variable $m$,
\begin{align*}
\E |\tilde\rho^N_t -\tilde\mu^N_t| &\leq  \frac1N \sum_{i,j} |\delta_j-\delta_i| \E\int_0^t
| N \tilde\rho^N_{i,s} Q_{i,j}(s, \alpha^i(s), \tilde\rho^N_s) - N \mu_{i,t} Q_{i,j}(s, \alpha^i(s), \mu_s) | ds \\
& \leq C \int_0^t \E |\tilde \rho^N_s -\mu_s| ds
\leq C \int_0^t \E |\tilde \rho^N_s -\tilde\mu^N_s| ds + C \sup_{t\in [0,T]} \E| \tilde\mu^N_t - \mu_t| \\
&\leq C\int_0^t \E |\tilde \rho^N_s -\tilde\mu^N_s| ds + \frac{C}{\sqrt{N}}.
\end{align*} 
Therefore Gronwall's inequality gives $\sup_{t\in [0,T]} \E| \tilde\rho^N_t - \tilde\mu^N_t| \leq  \frac{C}{\sqrt{N}}$, which, together with \eqref{eq58}, provides \eqref{eq57} that  concludes the proof. 
\end{proof}

\subsection{Propagation of chaos}

Here we prove Theorem \ref{thmchaos}. 
Throughout this subsection, we hence assume that $V\in \mathcal{C}^{1,1}([0,T]\times S_d)$ and that Assumptions \ref{B} and (\ref{C}1) are in force.


We first show the following simple result. 

\begin{lem}
	There exists a constant $C$ such that for any $i\in\ES$
	 \be 
	 \label{20}
	\max_{t\in [0,T], m\in S_d^N} 
	\left|D^{N,i}V(t,m) -D^i V(t,m)\right| \leq \frac CN.
	 \ee 
\end{lem}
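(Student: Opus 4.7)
The estimate should follow essentially from a first-order Taylor expansion in the local chart, exploiting that $V \in \mathcal{C}^{1,1}$ means $D_x\hat{V}$ is globally Lipschitz.

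The plan is as follows. Fix $i, j \in \ES$ and $m \in S_d^N$. First suppose $m_i \geq 1/N$, so that $m_{i,j} := m + \frac{1}{N}(\delta_j - \delta_i) \in S_d$. By the fundamental theorem of calculus applied along the line segment from $m$ to $m_{i,j}$ (which stays in $S_d$ by convexity), and using that the directional derivative $\partial_{m_j - m_i} V$ is well-defined and corresponds in local chart to $\langle D_x \hat{V}, \delta_j - \delta_i \rangle$ (with the convention $\delta_d$ identified appropriately), one has
\begin{equation*}
V(m_{i,j}) - V(m) = \int_0^{1/N} \partial_{m_j - m_i} V\bigl(m + s(\delta_j - \delta_i)\bigr)\, ds.
\end{equation*}
Since $V \in \mathcal{C}^{1,1}(S_d)$, the derivative $D_x \hat{V}$ is Lipschitz with some constant $L$ on $\hat{S}_d$, hence the map $m \mapsto \partial_{m_j - m_i} V(m)$ is Lipschitz with a constant depending only on $L$ (and on $|\delta_j - \delta_i| \leq \sqrt{2}$). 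Therefore
\begin{equation*}
\Bigl| \partial_{m_j - m_i} V\bigl(m + s(\delta_j - \delta_i)\bigr) - \partial_{m_j - m_i} V(m) \Bigr| \leq C L \, s,
\end{equation*}
for some $C$ depending only on $d$. Integrating over $s \in [0, 1/N]$ yields
\begin{equation*}
\Bigl| V(m_{i,j}) - V(m) - \tfrac{1}{N}\partial_{m_j-m_i}V(m) \Bigr| \leq \frac{C L}{2 N^2},
\end{equation*}
and multiplying by $N$ gives $|D^{N,i}_j V(t,m) - \partial_{m_j - m_i} V(t,m)| \leq CL/N$, which is the required bound (the argument is uniform in $t$ since the Lipschitz constant of $D_x \hat{V}(t,\cdot)$ is controlled uniformly in $t$ by the $\mathcal{C}^{1,1}([0,T]\times S_d)$ assumption).

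It remains to treat the boundary case $m_i = 0$. Here $m_{i,j}$ lies outside $S_d$ (its $i$-th coordinate is $-1/N$), but as the author observes in the convergence proof, in every situation where $D^{N,i}V$ enters the analysis (most notably in the HJB equation \eqref{HJBN}) it is multiplied by the factor $m_i = 0$, so any convention is admissible: one may for instance extend $\hat{V}$ from $\hat{S}_d$ to an open neighborhood in $\R^{d-1}$ preserving $\mathcal{C}^{1,1}$ regularity (the gradient being globally Lipschitz by assumption extends by Kirszbraun/McShane), and then the same Taylor argument applies verbatim. Alternatively, simply set $D^{N,i} V(t,m) := D^i V(t,m)$ when $m_i = 0$, in which case the inequality holds trivially and no coefficient of the HJB equation is affected.

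No serious obstacle is anticipated: the proof is a clean second-order remainder estimate, and the only care required is the boundary convention just discussed, which is dealt with exactly as in the earlier convergence argument.
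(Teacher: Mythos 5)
Your proposal is correct and follows essentially the same route as the paper: the fundamental theorem of calculus along the segment from $m$ to $m+\frac1N(\delta_j-\delta_i)$ combined with the Lipschitz continuity of $D^iV$ coming from the $\mathcal{C}^{1,1}$ assumption, yielding a second-order remainder of size $C/N^2$ and hence the $C/N$ bound after multiplying by $N$. Your additional discussion of the boundary case $m_i=0$ (where the increment leaves the simplex but the corresponding term is always multiplied by $m_i=0$ in the equations where the lemma is used) is a point the paper's proof passes over silently, and your resolution of it is sound.
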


\begin{proof}
	By definition, for each $i,j\in\ES$ and any $m\in S_d^N$ (we omit the time in the notation), using Taylor's formula and the 
	Lipschitz-continuity of $D^i V$, we obtain
	\begin{align*}
		&\left|D^{N,i}V(m) -D^i V(m)\right| =
	\left| N \bigg[ V \bigg( m+\frac1N(\delta_j-\delta_i)\bigg) -V(m)\bigg]
	- \partial_{m_j - m_i}V(m) \right|  \\
	&= \bigg| \int_0^1 \partial_{m_j - m_i} V \bigg( m + \frac1N s (\delta_j-\delta_i)\bigg)ds - \partial_{m_j - m_i} V(m) \bigg|\\
	& \leq \frac CN \int_0^1 s |\delta_j-\delta_i| ds
	\leq \frac CN.
	\end{align*}
\end{proof}



As in the statement of Theorem \ref{thmchaos}, let 
 $\alpha^N$ be the unique optimal feedback control for the $N$-agent optimization defined by \eqref{optalN}, and  $\mu^N$ be the corresponding optimal process satisfying \eqref{muNopt}. 
 Also, let $\alpha_*$ be the unique optimal feedback control for the MFCP defined by \eqref{alphaopt} and $\mu$ the corresponding optimal trajectory given by \eqref{mu}. We stress that $\alpha_N$ and $\alpha_*$ are functions of $t$ and $m$. Since Assumption (\ref{C}1) is in force, as explained in Remark \ref{remk}, we can assume here that the convergence of the value functions holds with a stronger rate, given by \eqref{convK}.

As an intermediate step, we consider  the process $\rho^N$ satisfying \eqref{muNopt} with the limiting feedback control $\alpha_*$; such intermediate process is needed to prove convergence and describes the empirical measure of a standard mean field interacting particle system, in the sense that the transition rate function $\alpha_*$ is the same for any $N$ (and depends on the empirical distribution $\rho^N$), while in $\mu^N$ the rate $\alpha^N$ depends on $N$. 
Thus, we first show the proximity of $\mu^N$ and $\rho^N$ and then prove convergence of $\rho^N$ to $\mu$. 
We assume that $\rho^N$ and $\mu^N$ start at time zero in the same point in $S^N_d$. 
The proof of the following proposition is the point where assumption
(\ref{B}3) is required.

\begin{prop}
\label{propNrho}
We have 
\be 
\label{murhoN}
\E \bigg[\sup_{0\leq t\leq T}|\mu^N_s -\rho^N_s|\bigg] \leq \frac{C}{\sqrt{N}}.
\ee 
\end{prop}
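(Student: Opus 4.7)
The plan is to exploit the sharp convergence of value functions $|V^N - V| \leq C/N$ (available under (C1) via Remark \ref{remk}) together with the strong convexity in (B3) to obtain an $L^2$-in-time-and-state bound on the feedback gap $\alpha^N - \alpha_*$ along the optimal trajectory $\mu^N$, and then transfer it into the claimed pathwise bound via a Gronwall argument on the difference of the two SDEs.

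First, I would represent both $\mu^N$ and $\rho^N$ as solutions of \eqref{NSDE} driven by the same Poisson random measure. Decomposing the difference of drifts as
\[
\mu^{N,i}\alpha^{N,i,j}(\mu^N) - \rho^{N,i}\alpha_*^{i,j}(\rho^N) = \mu^{N,i}\big[\alpha^{N,i,j}(\mu^N) - \alpha_*^{i,j}(\mu^N)\big] + \big[\mu^{N,i} - \rho^{N,i}\big]\alpha_*^{i,j}(\mu^N) + \rho^{N,i}\big[\alpha_*^{i,j}(\mu^N) - \alpha_*^{i,j}(\rho^N)\big],
\]
using Lipschitz continuity of $\alpha_*$ (which follows from $V \in \mathcal{C}^{1,1}$ and Lipschitz continuity of $a^*$ under (B3)) to bound the last two terms by $C|\mu^N - \rho^N|$, and BDG for the compensated jump martingale (contributing $O(1/\sqrt N)$), a Gronwall argument reduces the problem to showing
\[
\E \int_0^T \sum_i \mu^{N,i}_s \big|\alpha^{N,i}(s, \mu^N_s) - \alpha_*^i(s, \mu^N_s)\big| \, ds \leq \frac{C}{\sqrt N}.
\]

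For the key inequality, I would apply Dynkin's formula to $V(s, \mu^N_s)$. Since $V \in \mathcal{C}^{1,1}$, a Taylor expansion together with \eqref{20} gives $\mathcal{L}^{N, \alpha^N}_s V(m) = \sum_i m_i \langle Q(\alpha^{N,i}), D^i V\rangle + O(1/N)$ uniformly. Since $V$ is a classical solution of \eqref{HJB}, one has $\partial_s V + \sum_i m_i \mathcal{H}^i(\alpha_*^i, D^iV) = 0$ pointwise; combining with $V^N(0, m_0) = J^N(\alpha^N)$ yields
\[
V^N(0, m_0) - V(0, m_0) = \E \int_0^T \sum_i \mu^{N,i}_s \big[\mathcal{H}^i(\alpha_*^i, D^iV) - \mathcal{H}^i(\alpha^{N,i}, D^iV)\big](s, \mu^N_s) \, ds + O(1/N).
\]
By (B3), $\mathcal{H}^i(\cdot, z)$ is $\lambda$-strongly concave with maximizer $\alpha_*^i = a^*(D^iV)$, so each integrand term is bounded below by $\lambda |\alpha_*^i - \alpha^{N,i}|^2$. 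Combined with $|V^N(0, m_0) - V(0, m_0)| \leq C/N$ from Remark \ref{remk}, this gives $\E \int_0^T \sum_i \mu^{N,i}_s |\alpha_*^i - \alpha^{N,i}|^2 \, ds \leq C/N$, and a Cauchy-Schwarz step using $\sum_i \mu^{N,i}_s = 1$ together with Jensen's inequality produces the desired $L^1$ bound at rate $C/\sqrt N$.

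The main obstacle is the simultaneous bookkeeping of three ingredients: the finite-difference error $\mathcal{L}^{N,\alpha^N}_s V - \mathcal{L}^{\alpha^N}_s V = O(1/N)$ (requiring $V \in \mathcal{C}^{1,1}$), the sharp $C/N$ rate for $|V^N - V|$ rather than the a priori $C/\sqrt N$ rate (essential for the argument, and provided by (C1) via Remark \ref{remk}), and the fact that the weighting by $\mu^{N,i}$ in the $L^2$ estimate coincides exactly with the weighting appearing in the drift decomposition, which is what enables Cauchy-Schwarz to deliver the sharp $C/\sqrt N$ rate rather than a worse one.
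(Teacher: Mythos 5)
Your proposal is correct and follows essentially the same route as the paper: Dynkin's formula along $\mu^N$ combined with the classical HJB equation, the $\lambda$-strong concavity of $\mathcal{H}^i$ in $a$, the finite-difference error \eqref{20}, and the sharp rate \eqref{convK} to obtain the $L^2$ bound \eqref{boundcontrols} on the control gap, followed by the three-term drift decomposition, Cauchy--Schwarz with the matching $\mu^{N,i}$ weights, and Gronwall. The only cosmetic differences are the arrangement of the drift decomposition and your use of BDG for the compensated martingale, where the paper instead invokes a pathwise coupling estimate from \cite{cecfis}; both yield the same $O(1/\sqrt{N})$ contribution.
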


\begin{proof}
	 \emph{Step 1}. We compute the limit value function $V$ along $\mu^N$. Dynkin formula and then the HJB equation \eqref{HJB} give 
	\begin{align*}
	&\E[V(T,\mu^N_T)] -\E[V(0,\mu^N_0)] \\
	&=
	\E\Bigg[\int_0^T \bigg(\partial_t V(t, \mu^N_t) 
	+ N \sum_{i\in \ES} 
	\mu^N_{i,t} \alpha_N^{i,j}(t,\mu^N_t) 
	\Big[V \Big(t,\mu^N_t + \frac1N (\delta_j-\delta_i)\Big) - V(t,\mu^N_t) \Big] \bigg)dt \Bigg] \\
	&= \E\Bigg[\int_0^T  \sum_{i\in\ES} \mu^N_{i,t} \Big(
	H^i(t, \mu^N_t, D^i  V(t,\mu^N_t)) 
	+ \langle \alpha^i_N(t,\mu^N_t) , D^{N,i}V(t,\mu^N_t) \rangle \Big)dt \Bigg]\\
	&= \E\Bigg[\int_0^T  \sum_{i\in\ES} \mu^N_{i,t} \Big(
	 \mathcal{H}^i \big(t, a^*(t,i,\mu^N_t, D^i V(t,\mu^N_t)), \mu^N_t, D^i V(t,\mu^N_t)\big) - 
	 \mathcal{H}^i\big(t, \alpha_N^i(t,\mu^N_t), \mu^N_t,D^i V(t,\mu^N_t)) \\
	 &\qquad 
	-  f^i(t, \alpha_N^i (t,\mu^N_t), \mu^N_t)
	+ \langle \alpha^i_N(t,\mu^N_t) , D^{N,i}V(t,\mu^N_t) 
	- D^i V(t,\mu^N_t) \rangle \Big) dt \Bigg].
	\end{align*} 
	Since $a^*(t,i,m,z)$ maximizes the pre-Hamiltonian $\mathcal{H}^i(t,m,z)$ in \eqref{preH}, using \eqref{conva}, we obtain, for any $a\in [0,M]^d$, 
	\be 
	\mathcal{H}^i(t, a^*(t,i,m,z)  ,m,z) -\mathcal{H}^i(t,a,m,z) \geq \lambda |a^*(t,i,m,z) -a|^2,
	\ee 
	This inequality, together with \eqref{20} and the uniform boundedness of $\alpha^N$, yields 
	\begin{equation*}
	\begin{split}
	\E\Bigg[  &\int_0^T \sum_{i\in \ES} \mu^{N}_{i,t} f^i(t, \alpha_N^i (t,\mu^N_t), \mu^N_t)dt + \sum_{i\in \ES}g^i(\mu^N_T)\mu^{N}_{i,T}  \Bigg] - \E[V(0,\mu^N_0)] \\
	&
	\geq \E\Bigg[  \int_0^T \lambda \sum_{i\in\ES} \mu^N_{i,t} \left| \alpha^i_N(t,\mu^N_t) - a^*(t,i,\mu^N_t,D^iV(t,\mu^N_t))\right|^2 dt \Bigg]
	- \frac CN.
	\end{split}
	\end{equation*}
	Observe that  on the l.h.s. there is exactly $J^N(\alpha_N)$, as defined by \eqref{costNmu}, which is equal to $\E[V^N(0,\mu^N_0)]$. Therefore we obtain
	\[
	\E\Bigg[\!  \int_0^T \!\! \sum_{i\in\ES} \! \mu^N_{i,t} \left| \alpha^i_N(t,\mu^N_t) - a^*(t,i,\mu^N_t,D^iV(t,\mu^N_t))\right|^2 \! dt\Bigg]
	\!\leq\! \frac CN + 
		C\! \max_{ m\in S_d^N} 
	\left|V^N(0,m) - V(0,m)\right|\! ,
		\]
		which, applying \eqref{convK}, gives
		\be 
		\label{boundcontrols}
		\E\Bigg[  \int_0^T  \sum_{i\in\ES} \mu^N_{i,t} \left| \alpha^i_N(t,\mu^N_t) - a^*(t,i,\mu^N_t,D^iV(t,\mu^N_t))\right|^2 dt \Bigg]
		\leq \frac{C}{N}.
		\ee 
		
	\emph{Step 2}. Considering now the process $\rho^N$, and applying the SDE representation as in the proof of Lemma \ref{lemlip}, we obtain  
	 for any $t>0$ 
	\begin{align*}
	\phi(t)&:=\E \bigg[\sup_{0\leq s\leq t}|\mu^N_s -\rho^N_s|\bigg] \leq \E \Bigg[\int_0^t \sum_{i,j} \frac1N(\delta_j-\delta_i) \left|N\mu^N_{i,s} \alpha_N^{i,j}(s,\mu^N_s) 
	- N\rho^{N}_{i,s} \alpha^{i,j}_*(s,\rho^N_s) \right| ds\Bigg]  \\
	&\leq 2	\E \Bigg[\int_0^t \sum_{i,j} \bigg( \mu^N_{i,s} \left|\alpha_N^{i,j}(s,\mu^N_s) - \alpha^{i,j}_*(s,\mu^N_s)\right|
	+ \mu^N_{i,s} \left|\alpha^{i,j}_*(s,\mu^N_s) - \alpha^{i,j}_*(s,\rho^N_s)\right| \\
	&\qquad
	 + \alpha^{i,j}_*(s,\rho^N_s) \left|\mu^N_{i,s}
	- \rho^{N}_{i,s}  \right| \bigg) ds\Bigg]
	\end{align*}
	and applying  Jensen's inequality, the boundedness and Lipschitz-continuity of the feedback function $\alpha_*$ (verified as $V$ is $\mathcal{C}^{1,1}$ and $\alpha_*$ is given by \eqref{alphaopt}),  and  then \eqref{boundcontrols},  we get
	\begin{align*}
	\phi(t) &\leq C \sqrt{ \E\bigg[  \int_0^t  \sum_{i\in\ES} \mu^N_{i,s} \left| \alpha^N_i(s,\mu^N_s) - a^*(t,i,\mu^N_s,D^iV(t,\mu^N_s))\right|^2 dt \bigg] }
	+C \E \left[\int_0^t \left| \mu^N_{s}
	- \rho^{N}_{s} \right| ds \right] \\
	& \leq \frac{C}{\sqrt{N}} + C \int_0^t \phi(s)ds,
	\end{align*}
	which, by Gronwall's lemma, provides \eqref{murhoN}.
\end{proof}

We introduce also the process $\bm{Y}$ related to the empirical measure $\rho^N$, in which the transition rate function is given in terms of the limiting optimal feedback $\alpha_*(t,i,m)$, independent of $N$. In order to prove the propagation of chaos result (Theorem \ref{thmchaos}) we first show the proximity of $\bm{X}$ and $\bm{Y}$ and then prove propagation of chaos for the process $\bm{Y}$, which is a standard mean field interacting particle system. The proof is pretty standard and the arguments are the same that we used in \cite{CP}, based on a probabilistic representation of the dynamics, thus we are not going to give all the details below. 
We recall that the initial conditions are fixed and i.i.d.. The following is the counterpart of Proposition \ref{propNrho} for the $N$ trajectories.

\begin{lem}
For any $N\in\mathbb{N}$ and $k\in\NN$, it holds
\be 
\label{distXY}
\E \bigg[\sup_{0\leq t\leq T}|X^k_s -Y^k_s|\bigg] \leq \frac{C}{\sqrt{N}}.
\ee 
\end{lem}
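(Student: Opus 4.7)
The plan is to couple $\bm{X}$ and $\bm{Y}$ through shared Poisson random measures, as in \eqref{NSDE} and in the proof of Lemma \ref{lemlip}. Under Assumption \ref{B} the transition rates are simply given by the control, so setting $\mathcal{N}^k(ds,d\theta)$ to be independent copies (over $k\in\NN$) of a standard Poisson random measure on $[0,T]\times [0,M]^{d\times d}$ with intensity $\nu$, one may write
\[
 dX^k_t = \int (j - X^k_{t-})\, \mathbbm{1}_{\big(0,\,\alpha_N^{X^k_{t-},j}(t,\mu^N_{t-})\big]}(\theta)\, \mathcal{N}^k(dt,d\theta),
\]
and analogously for $Y^k$ with rate $\alpha_*^{Y^k_{t-},j}(t,\rho^N_{t-})$, where $\rho^N$ is the empirical measure of $\bm{Y}$. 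Since the initial data are i.i.d.\ and both vectors of controls are exchangeable in feedback form, the vectors $\bm X$ and $\bm Y$ are exchangeable, so $\phi(t):=\E\big[\sup_{s\leq t}|X^k_s-Y^k_s|\big]$ does not depend on $k$.

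Next, I would bound the jump increments by triangle inequality, splitting the rate difference
\[
\big|\mathbbm{1}_{X^k_s=i}\alpha_N^{i,j}(s,\mu^N_s)-\mathbbm{1}_{Y^k_s=i}\alpha_*^{i,j}(s,\rho^N_s)\big|
\]
into three pieces: (a) $\mathbbm{1}_{X^k_s=i}|\alpha_N^{i,j}(s,\mu^N_s)-\alpha_*^{i,j}(s,\mu^N_s)|$, (b) $\mathbbm{1}_{X^k_s=i}|\alpha_*^{i,j}(s,\mu^N_s)-\alpha_*^{i,j}(s,\rho^N_s)|$, and (c) $\alpha_*^{i,j}(s,\rho^N_s)|\mathbbm{1}_{X^k_s=i}-\mathbbm{1}_{Y^k_s=i}|$. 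By \cite[Lemma 3]{cecfis} applied to $|X^k-Y^k|\leq\mathbbm{1}_{X^k\neq Y^k}$ through the coupling (or directly by computing the variation of the pure-jump process), one obtains
\[
\phi(t)\leq C\int_0^t\E\big[\text{(a)}+\text{(b)}+\text{(c)}\big]\,ds.
\]

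Now I estimate the three terms. Term (c) is controlled by $|X^k_s-Y^k_s|$ up to a multiplicative constant (because $\alpha_*$ is bounded), giving a contribution $C\int_0^t\phi(s)\,ds$. For term (b) I use that $\alpha_*=\alpha_*(t,i,m)$ is Lipschitz in $m$ (a consequence of $V\in \mathcal{C}^{1,1}$ and the Lipschitz property of $a^*$ stated below Assumption \ref{B}) together with Proposition \ref{propNrho}, which yields $\E|\mu^N_s-\rho^N_s|\leq C/\sqrt N$; the contribution is $O(1/\sqrt N)$. For term (a) I exploit exchangeability: summing over $k\in\NN$ and dividing by $N$ identifies the average with
\[
\E\int_0^t\sum_{i,j}\mu^N_{i,s}\,\big|\alpha_N^{i,j}(s,\mu^N_s)-\alpha_*^{i,j}(s,\mu^N_s)\big|\,ds,
\]
which, by Cauchy--Schwarz together with the key estimate \eqref{boundcontrols} obtained in the proof of Proposition \ref{propNrho}, is bounded by $C/\sqrt N$. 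Putting all three estimates together yields $\phi(t)\leq C/\sqrt N+C\int_0^t\phi(s)\,ds$, and Gronwall's lemma gives \eqref{distXY}.

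The main obstacle is term (a): one must rewrite it using exchangeability to reduce to an empirical-measure average and then apply Cauchy--Schwarz carefully so that the $L^2$ control provided by \eqref{boundcontrols} (which itself relies on the uniform convexity \eqref{conva} from (\ref{B}3) and on the stronger convergence rate \eqref{convK} available under (\ref{C}1)) translates into an $L^1$ rate of order $1/\sqrt N$. The rest is routine coupling of pure-jump processes and Gronwall.
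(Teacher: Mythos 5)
Your proposal is correct and follows essentially the same route as the paper: a Poisson-random-measure coupling of $\bm X$ and $\bm Y$, exchangeability to reduce the control discrepancy to an empirical-measure average, Cauchy--Schwarz plus \eqref{boundcontrols} for the $\alpha_N$-versus-$\alpha_*$ term, Lipschitz continuity of $\alpha_*$ together with Proposition \ref{propNrho} for the $\mu^N$-versus-$\rho^N$ term, and Gronwall to close. The only cosmetic difference is that the paper first swaps $Y^k_s$ for $X^k_s$ in the argument of $\alpha_*$ (absorbing that into the Gronwall term, which is your term (c)) and then quotes the remaining bound from the proof of Proposition \ref{propNrho}, whereas you carry out the three-way splitting explicitly.
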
 

\begin{proof}
By exploiting the representation of $\bm{X}$ and $\bm{Y}$ in terms of SDEs driven by Poisson random measures (similar for the representation for $\mu^N$ used above, see \cite{CP} for the details), we obtain
\begin{align*}
\varphi(t)&:=\mathbb{E} \left[\!\sup_{s \in [0,t]} \! |X^k_{s} - Y^k_{s}|\right] 
\leq \! C \mathbb{E} \! \int_{0}^t \left(\left|\alpha^N(s,X^k_s, \mu^N_s) - \alpha_*(s, Y^k_s, \rho^N_s)\right| + |X^k_{s} - Y^k_{s}|\right) ds
\\
&\leq C \mathbb{E} \! \int_{0}^t \left(\left|\alpha^N(s,X^k_s, \mu^N_s) - \alpha_*(s, X^k_s, \rho^N_s)\right| + |X^k_{s} - Y^k_{s}|\right) ds,
\end{align*} 
where we used that any bounded function is Lipschitz with respect to $i\in\ES$ finite.
Using the exchangeability of the processes $(\bm{X},\bm{Y})$, we can rewrite
\begin{align*}
\mathbb{E} & \int_{0}^t \left|\alpha^N(s,X^k_s, \mu^N_s) - \alpha_*(s, X^k_s, \rho^N_s)\right| ds = 
\frac1N \sum_{l=0}^N \mathbb{E} \! \int_{0}^t \left|\alpha^N(s,X^l_s, \mu^N_s) - \alpha_*(s, X^l_s, \rho^N_s)\right| ds \\
& = \frac1N \sum_{l=0}^N \mathbb{E} \! \int_{0}^t \sum_{i=1}^d \mathbbm{1}_{\{X^l_s=i\}} \left|
\alpha^N(s,i, \mu^N_s) -  \alpha_*(s, i, \rho^N_s)\right| ds \\ 
& = \mathbb{E} \! \int_{0}^t \sum_{i=1}^d \mu^N_{i,s}
\left| \alpha^N(s,i, \mu^N_s) - \alpha_*(s, i, \rho^N_s)\right| ds
\leq \frac{C}{\sqrt{N}},
\end{align*}
the latter bound following from the proof of the previous proposition.
Therefore we derive
\[
 \phi(t) \leq  \frac{C}{\sqrt{N}} + \int_0^t \phi(s) ds,
\]
which gives \eqref{distXY} by Gronwall's lemma.

\end{proof}



We are finally in the position to prove Theorem \ref{thmchaos}.
Recall that $\tilde{\bm{X}}$ is the decoupled process, made by independent copies of the limit dynamics, in which player $\tilde{X}^k$ uses the decentralized strategy $\alpha_*(t, \tilde X^k_t, \mu(t)) $, where $\alpha_*$ is the optimal control and $\mu$ is the optimal trajectory for the MFCP, and we have 
$\mathrm{Law} (\tilde{X}^k_t) = \mu(t)$ for any $t\in[0,T]$ and $k\in\NN$.


\begin{proof}[Proof of Theorem~\ref{thmchaos}] 

\noindent
Thanks to \eqref{distXY}, claim \eqref{chaosXtilde}  is proved if we show that
\begin{equation}
\label{last}
\mathbb{E}\left[\sup_{t \in [0,T]} |Y^k_{t} - \widetilde{X}^k_{t}|\right] \leq \frac{C}{\sqrt{N}}.
\end{equation}
Let $\tilde{\mu}^N_t$ be the empirical measure related to the i.i.d. process $\tilde{\bm{X}}$. 
Again \eqref{multi} gives
\be 
\sup_{t\in[0,T]}\mathbb{E} \left|\tilde\mu^N_t - \mu_t\right| \leq \frac{C}{\sqrt{N}}.
\ee 
The limit feedback $\alpha_*$ is Lipschits w.r.t. $m$ because so is $D V$. We use this fact and also the inequality
\be
\label{boundwass} 
|\mu^N_{\bm{x}} - \mu^N_{\bm{y}}| \leq C \frac1N \sum_{k=1}^N |x_k - y_k|
\ee
which follows from the definition of the 1-Wasserstein distance, which is is equivalent to the Euclidean distance in finite dimension. 
As in the proof of the previous lemma, we have
\begin{align*}
\phi&(t) := \mathbb{E}\left[\sup_{s \in [0,t]} |Y^k_{s} - \widetilde{X}^k_{s}|\right] \\
& \leq \mathbb{E}\left[\int_{0}^t \left|
\alpha_*(s, Y^k_s, \rho^N_s) - \alpha_*(s, Y^k_s, \mu_s)
\right| ds + C\int_{0}^t \left|Y^k_{s} - \widetilde{X}^k_{s}\right|ds\right]\\
& \leq C\mathbb{E} \int_{0}^t \left|
 \rho^N_s - \mu_s
\right| ds + C\E \int_{0}^t \left|Y^k_{s} - \widetilde{X}^k_{s}\right|ds \\
&\leq C\mathbb{E} \int_{0}^t \left|
 \rho^N_s - \tilde\mu^N_s
\right| ds  
+ C \sup_{t\in[0,T]}\mathbb{E} \left|\tilde\mu^N_t - \mu_t\right| 
+ C\E \int_{0}^t \left|Y^k_{s} - \widetilde{X}^k_{s}\right|ds \\
&\leq \frac1N \sum_{l=1}^N C\E \int_{0}^t \left|Y^l_{s} - \widetilde{X}^l_{s}\right|ds + \frac{C}{\sqrt{N}} + C\int_0^t \phi(s) ds \\
&\leq \frac{C}{\sqrt{N}} + C\int_0^t \phi(s) ds,
\end{align*}
where in the last inequality we used the exchangeability of the processes. 
Thus Gronwall's inequality yields \eqref{last}. 

Finally, observe that, if (\ref{C}1) is not in force --and thus just \eqref{convV} is satisfied and not \eqref{convK}-- then \eqref{boundcontrols} holds with $N$ replaced by $\sqrt{N}$ and hence, as a consequence of the above proofs, estimates \eqref{murhoN} and \eqref{last} hold with $\sqrt{N}$ replaced by $N^{1/4}$. Thus \eqref{boundwass} gives 
\[
\E\left[ \sup_{t\in [0,T]} | \rho^N_t - \tilde{\mu}^N_t | \right] \leq \frac{C}{N^{1/4}}.
\]  
Therefore  
\eqref{chaosmu} follows from \eqref{murhoN} (with $\sqrt{N}$ replaced by $N^{1/4}$ therein), the above estimate, and the inequality
\[
\mathbb{E}\left[\sup_{t \in [0,T]} \left|\tilde\mu_t - m_t\right|\right]  \leq C N^{-1/9},
\]
which can be found in \cite{RR}.
\end{proof}

\end{document}